\documentclass{amsart}
\usepackage{amscd,amssymb}
\usepackage[mathcal]{euscript}
\usepackage{graphicx}
\usepackage{hyperref}
\usepackage[arrow, matrix, curve]{xy}
\numberwithin{equation}{section}

\newtheorem{Thm}[equation]{Theorem}
\newtheorem{Lemma}[equation]{Lemma}
\newtheorem{Prop}[equation]{Proposition}

\newtheorem{Cor}[equation]{Corollary}
\theoremstyle{definition}
\newtheorem{Def}[equation]{Definition}
\newtheorem{Rmk}[equation]{Remark}
\newtheorem{ex}[equation]{Example}

%----- NEWCOMMANDS

\newcommand{\N}{\mathbb{N}}
\newcommand{\Z}{\mathbb{Z}}
\newcommand{\id}{\mathrm{id}}
\newcommand{\map}{\mathrm{map}}

\newcommand{\sset}{{\mathcal{S}_\ast}}
\DeclareMathOperator*{\colim}{colim}
\DeclareMathOperator*{\nlim}{lim}

\DeclareMathOperator{\Map}{\mathrm{Map}}

\DeclareMathOperator{\sk}{\mathrm{sk}}
\DeclareMathOperator{\csk}{\mathrm{csk}}
\DeclareMathOperator{\Inj}{\mathrm{Inj}}
\DeclareMathOperator{\Surj}{\mathrm{Surj}}
\DeclareMathOperator{\Bij}{\mathrm{Bij}}
\newcommand{\Sp}{\mathbb{S}}
\DeclareMathOperator{\conn}{\mathrm{conn}}

\theoremstyle{remark}
\newtheorem*{ass}{Assumptions}

 % $\adj{f}{C}{D}{g}$
 % $\adj{f}{C}{D}{g}$

\begin{document}

\author{Dominik Ostermayr}
\address{Mathematisches Institut der Universit\"at zu K\"oln, Weyertal 86-90, 50931 K\"oln, Germany}
\email{dosterma@math.uni-koeln.de }
\title{Equivariant $\Gamma$-spaces}
\begin{abstract}
The aim of this note is to provide a comprehensive treatment of the homotopy theory of $\Gamma$-$G$-spaces
for $G$ a finite group.
We introduce two level and stable model structures on $\Gamma$-$G$-spaces
and exhibit Quillen adjunctions to $G$-symmetric spectra
with respect to a flat level and a stable flat model structure respectively.
Then we give a proof that $\Gamma$-$G$-spaces model connective equivariant stable homotopy theory
along the lines of the proof in the non-equivariant setting given by Bousfield and Friedlander~\cite{BF}.
Furthermore, we study the smash product of $\Gamma$-$G$-spaces and show that
the functor from $\Gamma$-$G$-spaces to $G$-symmetric spectra commutes with the derived smash product.
Finally, we show that there is a good notion of geometric fixed points for $\Gamma$-$G$-spaces.
\end{abstract}
\maketitle
\setcounter{tocdepth}{1}
\tableofcontents
\thispagestyle{empty}
\newpage

\begin{section}{Introduction}
\label{sec: Introduction}
In the classical paper~\cite{Segal 1} Segal introduced $\Gamma$-spaces
as a tool to produce infinite loop spaces.
In fact, Segal showed that $\Gamma$-spaces model
connective stable homotopy theory and later Bousfield and Friedlander proved this in the
language of model categories~\cite{BF}. 

For $G$ a finite group, Segal developed the machinery of $\Gamma$-$G$-spaces
in~\cite{Segal 3} and it is known that \emph{very special} $\Gamma$-$G$-spaces
give rise to equivariant infinite loop spaces (cf.~\cite{Segal 3},~\cite{Shimakawa},~\cite{Santhanam}).
Santhanam also proved that $\Gamma$-$G$-spaces with a suitable model structure
are equivalent to equivariant $E_\infty$-spaces (cf.~\cite{Santhanam}).

Using the results of Shimakawa~\cite{Shimakawa}, we give a proof along the lines of~\cite{BF} that $\Gamma$-$G$-spaces
model connective equivariant stable homotopy theory.
Moreover, $\Gamma$-$G$-spaces possess a symmetric monoidal smash product as was shown by Lydakis~\cite{Lydakis},
motivating the question if this equivalence can be realized by a Quillen functor to a symmetric monoidal
category of $G$-spectra which commutes with the derived smash product.
This turns out to be true, if one uses the flat model structures on $G$-symmetric spectra as constructed by Hausmann~\cite{Hausmann}.
Even non-equivariantly, this might be of interest on its own right.
In addition, we define a geometric fixed point functor for $\Gamma$-$G$-spaces
which has all desirable properties.

The structure of the paper is as follows.
Sections $2$ and $3$ contain a brief review of basic facts about $G$-equivariant homotopy
theory and $G$-symmetric spectra. In particular, we will introduce the flat
model structure.
In Section $4$, we briefly discuss basic definitions and constructions concerning $\Gamma$-$G$-spaces and 
introduce two level model structures. The projective model structure was employed by Santhanam
in~\cite{Santhanam}, too, but we also show how to generalize the \emph{strict} model structure of~\cite{BF} to the equivariant setting.
In Section $5$, we exhibit Quillen pairs between the level model structures on $\Gamma$-$G$-spaces and the flat level model structure on
$G$-symmetric spectra. This requires a characterization of flat cofibrations of $G$-symmetric spectra which we carry out in the
appendix. We also show that spectra obtained from $\Gamma$-$G$-spaces are equivariantly connective and, using the results of~\cite{Shimakawa},
we show that very special $\Gamma$-$G$-spaces give rise to  $G\Omega$-symmetric spectra up to a level fibrant replacement.
After these preparations, we show that the homotopy categories with respect to the level model structures of very special $\Gamma$-$G$-spaces
and those connective spectra which are level equivalent to $G\Omega$-spectra are equivalent.
In Section $6$, we introduce stable equivalences of $\Gamma$-$G$-spaces and the stable model structures on $\Gamma$-$G$-spaces
corresponding to the two level model structures.
This leads to the equivalence of the homotopy categories
of $\Gamma$-$G$-spaces and connective $G$-symmetric spectra with respect to the stable model structures.
Section $7$ contains a discussion of the smash product of $\Gamma$-$G$-spaces.
Following the non-equivariant results from~\cite{Lydakis}, we show that it is well-behaved with respect to the model structures
and the functor from $\Gamma$-$G$-spaces to $G$-symmetric spetra commutes with the derived
smash product. 
Finally, in Section $8$, we define geometric fixed points for $\Gamma$-$G$-spaces with respect to a subgroup $H\leqslant G$.
This is a lax symmetric monoidal functor which sends suspension spectra to suspension spectra
and commutes with the derived smash product up to stable equivalence.
We characterize stable equivalences of $\Gamma$-$G$-spaces as those maps which induce stable equivalences
on all geometric fixed points.

\addtocontents{toc}{\protect\setcounter{tocdepth}{1}}
\subsection*{Acknowledgements}
This paper grew out of the author's Master's thesis written at the University of Bonn.
I want to thank my supervisor Stefan Schwede for drawing my interest towards the homotopy theory
of $\Gamma$-$G$-spaces and sharing his insights. In particular, the construction of geometric fixed points
is due to him.
Furthermore, I would like to thank Steffen Sagave for pointing out the characterization of flat $\mathcal{I}$-spaces
in~\cite{Sagave Schlichtkrull}. This led to the results in Section \ref{subsec: A characterization of flat cofibrations}.
I also would like to thank Markus Hausmann for sharing his Master's thesis and answering several questions.
Finally, I want to thank the GRK 1150 ``Homotopy and Cohomology'' for financial support during the preparation of this paper.
\addtocontents{tox}{\protect\setcounter{tocdepth}{2}}
\end{section}

\newpage
\begin{section}{Recollections on equivariant homotopy theory}
\label{sec: Recollections on equivariant homotopy theory}
This section is intended to fix basic terminology about model categories and recollect facts
about $G$-equivariant homotopy theory.
If not stated otherwise, the source of material is~\cite[II. 1., III. 1.]{MM}
though we work with simplicial sets as opposed to topological spaces.
Throughout this paper, the word ``space'' will mean simplicial set.

\begin{subsection}{Model categorical notions}
\label{subsec: Model categorical notions}
We will freely use the concepts of model category theory.
A reference is~\cite{DS}
and we use the numbering therein when referring to the axioms $\mathbf{MC 1}$ up to
$\mathbf{MC 5}$.
Other concepts from model category theory which we will make use of several times
include the small object argument and related notions like
$I$-injectives, $I$-cofibrations and regular $I$-cofibrations denoted $I$-inj,
$I$-cof and $I$-$\text{cof}_{\text{reg}}$
respectively. For a brief discussion of those we refer to
\cite[Appendix A]{sthomalg}.
A model category is \emph{proper} if weak equivalences are preserverd
under pullbacks along fibrations and under pushouts along cofibrations. A couple of times we
will make use of Reedy's patching lemma (cf.~\cite[3.8]{BF}).
We will assume basic knowledge about the homotopy theory of the category of spaces.
\end{subsection}

\begin{subsection}{Model structures on $G$-spaces}
\label{subsec: Model structures on G spaces}
The category of based spaces will be denoted
$\sset$.
It is closed symmetric monoidal under the smash product with unit $S^0$.
If $G$ is a finite group,
we also have the category of based spaces with left $G$-action together
with (not necessarily equivariant) maps
$\sset_G$. It is enriched over $G\sset$,
the corresponding category enriched over
$\sset$ with same objects but equivariant maps.

Several model structures on $G$-spaces will play a role in this paper.
The following model structure on $G\sset$ is the most important one.
A morphism $f\colon X\rightarrow Y$ in $G\sset$
is a \emph{$G$-fibration} (resp. \emph{$G$-equivalence}) if
$f^H\colon X^H\rightarrow Y^H$ is a fibration (resp. weak equivalence)
in $\sset$ for all $H\leqslant G$.
A map is a \emph{$G$-cofibration} if it satisfies the left lifting
property with respect to all acyclic $G$-fibrations.
With these notions of weak equivalences, fibrations and cofibrations,
$G\sset$ is a cofibrantly
generated proper model category.
In fact,
\begin{gather*}
I = \{i\colon (G/H\times \partial\Delta^{n})_+\rightarrow (G/H\times \Delta^n)_+|\ n\geq 0,\ H\leqslant G \}
\end{gather*}
and
\begin{gather*}
J = \{j\colon (G/H\times \Delta^n)_+\rightarrow (G/H \times \Delta^n\times \Delta^1)_+|\ n\geq 0,\ H\leqslant G\}
\end{gather*}
are sets of generating cofibrations and acyclic cofibrations.

More generally, if $\mathcal{F}$ is a family of subgroups, by which we mean a
collection of subgroups closed under conjugation
and taking subgroups,
there is a model structure relative to $\mathcal{F}$.
A map $f\colon X\rightarrow Y$ in $G\sset$ is an \emph{$\mathcal{F}$-fibration}
(resp. \emph{$\mathcal{F}$-equivalence}) if
$f^H\colon X^H\rightarrow Y^H$ is a fibration (resp. weak equivalence) in
$G\sset$ for all $H\in \mathcal{F}$.
This yields a cofibrantly generated proper model structure (cf.~\cite[IV. Theorem 6.5]{MM}).
As set of generating cofibrations (resp. acyclic cofibrations) we only take those
maps in $I$ (resp. $J$), where source and target have isotropy in $\mathcal{F}$.
Note that if $\mathcal{F}= \mathcal{ALL}$, this reproduces the model
structure introduced first and for arbitrary $\mathcal{F}$ the identity functor is a left Quillen functor
from the $\mathcal{F}$-model structure to the $\mathcal{ALL}$-model structure,
since every $G$-equivalence (resp. $G$-fibration) is an $\mathcal{F}$-equivalence
(resp. $\mathcal{F}$-fibration).
We want to point out that a map $A\rightarrow B$ is a cofibration in the $\mathcal{F}$-model
structure if and only if it is a cofibration in the $\mathcal{ALL}$-model structure and all
simplices not in the image have isotropy in $\mathcal{F}$.

If $\mathcal{F}$ is a family of subgroups of $G$, there is a
\emph{mixed} model structure on $G\sset$. The weak equivalences in this model
structure are the $\mathcal{F}$-equivalences and the cofibrations are the
$G$-cofibrations. The fibrations are defined by the appropriate lifting property
and are called \emph{mixed} $G$-fibrations.

\begin{ex}
\label{ex: Families Gn}
If the goup in question is $G\times \Sigma_n$ for $G$ an arbitrary finite
group and $\Sigma_n$ the symmetric group on $n$ letters, there is a particularly
important family denoted by $\mathcal{G}_n$.
It consists of all subgroups $J\leqslant G\times \Sigma_n$ such
that $J\cap \{1\}\times \Sigma_n = \{(1, 1)\}$.
Equivalently, those are the subgroups
of the form $\{(h, \rho(h))|\ h\in H\}$ where $H$ is a subgroup of $G$ and $\rho\colon H\rightarrow \Sigma_n$
is a homomorphism.
\end{ex}
\end{subsection}

\begin{subsection}{Equivariant enrichments of model categories}
\label{subsec: Equivariant enrichments of model categories}
We will now introduce the equivariant analogon of simplicial model categories.
The difference is that in the equivariant setting there are usually function
$G$-spaces, as opposed to simply function spaces.
Let $\mathcal{C}_G$ be the category
of $G$-objects in some category enriched over $\sset$.
Then $\mathcal{C}_G$ is enriched over $G\sset$, where we
equip the mapping space $\Map_{\mathcal{C}}(A, B)$
with the conjugation action.
It follows formally that passing to $G$-fixed points on mapping spaces yields a
category $G\mathcal{C}$
enriched over $\sset$ with objects the $G$-objects and spaces of equivariant maps.

Assume now in addition that $G\mathcal{C}$ has a model structure and
$\mathcal{C}_G$ is tensored and cotensored over
$\sset_{G}$.
The latter means
that there are functors
\begin{gather*}
\xymatrix@R=0cm{
 \sset_{G}\times \mathcal{C}_G\ar[r] & \mathcal{C}_G,\ (X, C) \ar@{|->}[r] & X\otimes C,
}\\
\xymatrix{
 \sset_{G}\times \mathcal{C}_G\ar[r] & \mathcal{C}_G,\ (X, D)\ar@{|->}[r] & \map_{\mathcal{C}}(X, D)
}
\end{gather*}
together with natural associativity isomorphisms and natural $G$-isomorphisms
\begin{gather*}
\xymatrix{
\Map_{\mathcal{C}}(X\otimes C, D)\cong \Map_{\sset}(X, \Map_{\mathcal{C}}(C, D))\cong \Map_{\mathcal{C}}(C, \map_{\mathcal{C}}(X, D).
}\end{gather*}

Passing to $G$-fixed points shows that $G\mathcal{C}$ is automatically
tensored and cotensored over $\sset$, though in general not over $G\sset$.

Given two maps $i\colon A\rightarrow X$ and $p\colon E\rightarrow B$ in
$G\mathcal{C}$, there is a $G$-map
\begin{equation*}
\entrymodifiers={+!! <0pt, \fontdimen22\textfont2>}
\xymatrix{
 \Map_{\mathcal{C}}(i^*, p_*)\colon \Map_{\mathcal{C}}(X, E)\ar[r] & \Map_{\mathcal{C}}(A, E)\times_{\Map_{\mathcal{C}}(A, B)} \Map_{\mathcal{C}}(X, B)
}
\end{equation*}
and we have
\begin{Def}
\label{def: G-simplicial model category}
In the situation above,
$G\mathcal{C}$ is called \emph{$G$-simplicial} if for all cofibrations $i$ and
all fibrations $p$ the map $\Map_{\mathcal{C}}(i^*, p_*)$ is a $G$-fibration,
which is in addition a $G$-equivalence if $i$ or $p$ is. 
\end{Def}
\begin{ex}
The most elementary example is, of course, $G\sset$ itself.
\end{ex}

\begin{Lemma}The following are equivalent.
\begin{itemize}
\item[(a)] $G\mathcal{C}$ is $G$-simplicial.
\item[(b)] For all cofibrations $f\colon A\rightarrow X$ in $G\mathcal{C}$
           and all $G$-cofibrations $i: K\rightarrow L$ in $G\sset$
      the pushout product map 
      \[
\entrymodifiers={+!! <0pt, \fontdimen22\textfont2>}
      \xymatrix{
      i\Box f\colon K\otimes X\cup_{K\otimes A} L\otimes A \ar[r] & L\otimes X 
      }
      \]
      is a cofibration, which is in addition acyclic if $f$ or $i$ is.
\item[(c)] For all fibrations $p\colon E\rightarrow B$ in $G\mathcal{C}$ and all $G$-cofibrations $i\colon K\rightarrow L$
      in $G\sset$
      the map
      \[
\entrymodifiers={+!! <0pt, \fontdimen22\textfont2>}
      \xymatrix{
      \map_{\mathcal{C}}(i^*, p_*)\colon\map_{\mathcal{C}}(L, E)\ar[r] & \map_{\mathcal{C}}(K, E)\times _{\map_{\mathcal{C}}(K, B)} \map_{\mathcal{C}}(L, B)
      }
      \]
      is a fibration, which is in addition acyclic if $i$ or $p$ is.
\end{itemize}
\end{Lemma}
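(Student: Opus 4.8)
The plan is to prove the equivariant form of Quillen's axiom $\mathbf{SM7}$ by reducing all three clauses to one common family of lifting problems. I would first fix notation that resolves the clash between the statement and Definition~\ref{def: G-simplicial model category}: write $f\colon A\to X$ for a cofibration in $G\mathcal{C}$, $p\colon E\to B$ for a fibration in $G\mathcal{C}$, and $i\colon K\to L$ for a $G$-cofibration in $G\sset$ (so the cofibration called $i$ in the definition is here renamed $f$, and $i$ now denotes the $G$-cofibration of $G$-spaces). The only genuinely equivariant point is that fibrancy of $\Map_{\mathcal{C}}(f^*,p_*)$ is detected against $G$-cofibrations of $G$-spaces, and that this matches exactly the tensoring over $\sset_G$ appearing in the pushout product; everything else is formal manipulation of the stated adjunction isomorphisms.

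The heart of the argument is a single adjunction lemma, valid for \emph{arbitrary} maps $i$, $f$, $p$: the lifting problem of $i\Box f$ against $p$ in $G\mathcal{C}$, the lifting problem of $i$ against $\Map_{\mathcal{C}}(f^*,p_*)$ in $G\sset$, and the lifting problem of $f$ against $\map_{\mathcal{C}}(i^*,p_*)$ in $G\mathcal{C}$ correspond to one another, and hence admit solutions simultaneously. To prove this I would unwind the relevant universal properties: the domain of $i\Box f$ is a pushout while the codomains of the two corner maps are pullbacks, so the data of an outer square in any one of the three pictures is, via the natural $G$-isomorphisms relating $\otimes$, $\Map_{\mathcal{C}}$ and $\map_{\mathcal{C}}$, literally the same data as in the other two. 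Here one passes to $G$-fixed points, which turns those $G$-isomorphisms into bijections of equivariant morphism sets; a diagonal filler in one square then transposes to a diagonal filler in the others.

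Next I would record the two lifting characterizations supplied by the ambient model structures: in $G\sset$ a map is a $G$-fibration iff it has the right lifting property against every acyclic $G$-cofibration, and an acyclic $G$-fibration iff it does so against every $G$-cofibration; in $G\mathcal{C}$ a map is a cofibration (resp.\ acyclic cofibration) iff it has the left lifting property against every acyclic fibration (resp.\ every fibration). With these the equivalences become bookkeeping. For $(b)\Rightarrow(a)$: $\Map_{\mathcal{C}}(f^*,p_*)$ is a $G$-fibration iff every acyclic $G$-cofibration $i$ lifts against it, iff (by the adjunction lemma) $i\Box f$ lifts against $p$ for every acyclic $G$-cofibration $i$; but $(b)$ makes $i\Box f$ an acyclic cofibration, which indeed lifts against the fibration $p$. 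The acyclicity clauses are the same computation against all $G$-cofibrations, using that $i\Box f$ is a cofibration when $p$ is an acyclic fibration and an acyclic cofibration when $f$ is acyclic. Reversing the quantifiers gives $(a)\Rightarrow(b)$: $i\Box f$ is a cofibration iff it lifts against every acyclic fibration $p$, iff $i$ lifts against $\Map_{\mathcal{C}}(f^*,p_*)$, which by $(a)$ is an acyclic $G$-fibration, so the $G$-cofibration $i$ lifts; the acyclic cases follow by matching the hypothesis on $f$ or $i$ to the corresponding acyclicity of $\Map_{\mathcal{C}}(f^*,p_*)$.

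Finally I would establish $(a)\Leftrightarrow(c)$ by the mirror-image argument, now routing the common lifting problem of $i\Box f$ against $p$ through the other adjunction, i.e.\ the correspondence with lifting problems of $f$ against $\map_{\mathcal{C}}(i^*,p_*)$: the map $\map_{\mathcal{C}}(i^*,p_*)$ is a fibration iff every acyclic cofibration $f$ lifts against it, iff $i\Box f$ lifts against $p$ for all such $f$, which is exactly the clause of $(b)$ asserting that $i\Box f$ is an acyclic cofibration; the acyclic-fibration clause of $(c)$ is handled symmetrically. I expect no serious obstacle beyond care in matching each acyclicity hypothesis to the correct test class. The one step demanding genuine attention is the adjunction lemma, where one must verify that the $G$-fixed-point functor converts the $G$-isomorphisms of mapping $G$-spaces into the bijections of equivariant lifts identifying the three lifting problems, so that testing $G$-fibrancy against $G$-cofibrations of $G$-spaces is correctly dual to the pushout product with its $\sset_G$-tensoring.
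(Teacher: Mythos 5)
Your proof is correct: the three-way adjunction correspondence of lifting problems (for $i\Box f$ against $p$, for $i$ against $\Map_{\mathcal{C}}(f^*,p_*)$, and for $f$ against $\map_{\mathcal{C}}(i^*,p_*)$), combined with the characterization of (acyclic) cofibrations and fibrations by lifting properties, is exactly the standard formal argument. The paper offers no proof at all here --- it says only that ``the proof is formal and thus skipped'' --- and your write-up supplies precisely that formal argument, so it matches the paper's intended approach.
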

\begin{proof}
 The proof is formal and thus skipped.
\end{proof}

We also have
\begin{Lemma}
\label{lem: Detection of weak equivalences}
Suppose $G\mathcal{C}$ is $G$-simplicial. A map $f\colon A\rightarrow B$ between cofibrant objects
is a weak equivalence if and only if for all fibrant objects $X$ the induced map
\[
\Map_{\mathcal{C}}(f^*, X)\colon \Map_{\mathcal{C}}(B, X)\rightarrow \Map_{\mathcal{C}}(A, X)
\]
is a $G$-equivalence.
\end{Lemma}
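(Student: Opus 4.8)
The plan is to prove the two implications separately, using throughout that the $G$-simplicial structure passes to an ordinary simplicial model structure on $G\mathcal{C}$: since taking $G$-fixed points sends $G$-fibrations to fibrations and $G$-equivalences to equivalences, applying $(-)^G$ to the map $\Map_{\mathcal{C}}(i^*, p_*)$ of Definition \ref{def: G-simplicial model category} shows that $G\mathcal{C}$ is a simplicial model category in the usual sense. In particular, for $A$ cofibrant and $X$ fibrant one has a natural, composition-compatible identification $[A, X]\cong \pi_0\bigl(\Map_{\mathcal{C}}(A, X)^G\bigr)$, where $[-,-]$ denotes morphisms in $\Ho(G\mathcal{C})$.

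For the forward direction I would first record the following consequence of the $G$-simplicial axiom: if $i\colon A'\rightarrow B'$ is an acyclic cofibration and $X$ is fibrant, then applying Definition \ref{def: G-simplicial model category} to $i$ and the fibration $p\colon X\rightarrow \ast$ to the terminal object shows that $\Map_{\mathcal{C}}(i^*, p_*)$ is an acyclic $G$-fibration. Since the ambient categories are based we have $\Map_{\mathcal{C}}(A', \ast)=\ast=\Map_{\mathcal{C}}(B', \ast)$, so the codomain of the pullback--hom collapses and this map is simply $\Map_{\mathcal{C}}(i^*, X)\colon \Map_{\mathcal{C}}(B', X)\rightarrow \Map_{\mathcal{C}}(A', X)$, which is therefore a $G$-equivalence. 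The point now is that our map $f$ need not itself be a cofibration, so I cannot apply this observation directly. Instead I would invoke Ken Brown's lemma for the contravariant functor $\Map_{\mathcal{C}}(-, X)$ with values in $G\sset$ (equipped with the $G$-equivalences): the computation just given verifies that this functor carries acyclic cofibrations between cofibrant objects to $G$-equivalences, and Ken Brown's lemma then upgrades this to all weak equivalences between cofibrant objects, which is precisely the claim.

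For the converse, suppose $\Map_{\mathcal{C}}(f^*, X)$ is a $G$-equivalence for every fibrant $X$. Evaluating $G$-fixed points (the case $H=G$) shows that $\Map_{\mathcal{C}}(f^*, X)^G$ is an ordinary equivalence, so on $\pi_0$ the precomposition map $f^*\colon [B, X]\rightarrow [A, X]$ is a bijection for every fibrant $X$, and hence --- replacing an arbitrary object by a fibrant model and using the identification above --- for every object of $G\mathcal{C}$. By the Yoneda lemma in $\Ho(G\mathcal{C})$ this forces $f$ to be an isomorphism in the homotopy category, and since the weak equivalences of a model category are saturated, $f$ is a weak equivalence. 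Alternatively one could first reduce, via functorial fibrant replacement and $\mathbf{MC 2}$, to the case where $A$ and $B$ are both fibrant and cofibrant, and then read off a homotopy inverse of $f$ directly by feeding $X=A$ and $X=B$ into the hypothesis.

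The main obstacle is the forward direction, and specifically the observation that a weak equivalence between cofibrant objects is in general not a cofibration; once this is circumvented by Ken Brown's lemma, everything else is a formal consequence of the $G$-simplicial axiom. The one auxiliary point that deserves care is the degeneration of the pullback--hom to $\Map_{\mathcal{C}}(i^*, X)$, i.e. the fact that mapping into the terminal object yields the terminal space, which holds because the enrichments in question are based.
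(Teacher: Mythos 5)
Your proof is correct, but it is worth pointing out that the paper does not actually give an argument for this lemma: it simply cites Lemma 4.5 of Bousfield--Friedlander, which is the non-equivariant statement for simplicial model categories. Your write-up supplies precisely the details that citation leaves implicit, adapted to the equivariant setting: the forward direction via the degenerate case $p\colon X\rightarrow \ast$ of the $G$-simplicial axiom together with the contravariant form of Ken Brown's lemma (correctly invoked, since a weak equivalence between cofibrant objects need not be a cofibration), and the converse by passing to $G$-fixed points, identifying $\pi_0$ of the equivariant mapping space with morphisms in the homotopy category, and concluding by Yoneda plus saturation of the weak equivalences. This is essentially the standard Bousfield--Friedlander argument, so the two routes agree in substance; what your version adds is the explicit observation that the $G$-simplicial axiom, upon taking $G$-fixed points, makes $G\mathcal{C}$ an ordinary simplicial model category, and that the forward direction genuinely yields a $G$-equivalence (all fixed points) while the converse only needs the case $H=G$. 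Two minor polish points: the collapse $\Map_{\mathcal{C}}(A', \ast)=\ast$ is most cleanly justified not by ``the enrichment is based'' but by the fact that $\Map_{\mathcal{C}}(A', -)$ has a left adjoint $(-)\otimes A'$ (the tensor), hence preserves the terminal object, which in the based world is the one-point simplicial set; and in the converse you should (as you implicitly do) record that the bijection $[A, X]\cong \pi_0\bigl(\Map_{\mathcal{C}}(A, X)^G\bigr)$ is natural in $A$, so that it really does transport the hypothesis to precomposition on homotopy classes.
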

\begin{proof}
 See~\cite[Lemma 4.5]{BF} for instance.
\end{proof}

\end{subsection}

\end{section}

\newpage
\begin{section}{Recollections on \texorpdfstring{$G$-symmetric spectra}{G-symmetric spectra}}
\label{sec: Recollections on G symmetric spectra}
From now on, $G$ denotes a fixed finite group. Based on~\cite{Hausmann}, we will briefly
recall the basic definitions concerning $G$-symmetric spectra and the two model structures
we will work with later on.

\begin{subsection}{$G$-symmetric spectra}
\label{subsec: G symmetric spectra}

If $M$ is a finite set, 
the $M$-fold smash product $S^M = \wedge_M S^1$
of the (based) simplicial circle $S^1 = \Delta^1/\partial \Delta^1$
is the quotient of the $M$-fold product of $S^1$ by the $M$-fold
wedge product based at $\vee_M S^1/\vee_M S^1$.
The set $\{1, \ldots, n\}$ will be denoted by ${\bf{n}}$.

\begin{Def}
\label{def: Symmetric spectrum}
A \emph{symmetric spectrum} $X$ consists of
\begin{itemize}
\item[(a)] for all $n\geq 0$, a based $\Sigma_n$-space $X_n$ and
\item[(b)] for all $n\geq 0$, a based structure map $\sigma_n\colon X_n\wedge S^1\rightarrow X_{n+1}$.
\end{itemize}
This is subject to the condition that for all $n$, $m\in\N$, the \emph{iterated} structure map
\[
\xymatrix{
\sigma_n^m\colon X_n\wedge S^{{\bf{m}}} \cong (X_n\wedge S^1)\wedge S^{{\bf{m-1}}}\ar[r] & X_{n+1}\wedge S^{{\bf{m-1}}}\ar[r] & \ldots \ar[r] & X_{n+m}
}
\]
is $\Sigma_n\times \Sigma_m$-equivariant.
A morphism of symmetric spectra $f\colon X\rightarrow Y$ is a sequence of based $\Sigma_n$-maps $f_n\colon X_n\rightarrow Y_n$
such that, for all $n\in\N$, the square
\[
\xymatrix{
X_n\wedge S^1 \ar[r]^{f_n\wedge S^1} \ar[d]_{\sigma_n^X} & Y_n\wedge S^1 \ar[d]^{\sigma_n^Y}\\
X_{n+1}       \ar[r]_{f_{n+1}}        & Y_{n+1}
}
\]
commutes. The category of symmetric spectra will be denoted by ${Sp}^\Sigma$.
\end{Def}

\begin{Def}
\label{def: G symmetric spectrum}
A \emph{$G$-symmetric spectrum} is a $G$-object in $Sp^\Sigma$.
A morphism between $G$-symmetric spectra is a morphism of symmetric spectra
commuting with the $G$-action.
The category of $G$-symmetric spectra will be denoted by $G{Sp}^{\Sigma}$.
\end{Def}

Let $M$ be a finite $G$-set of order $m$.
We endow the space $S^M$ with the $G$-action
\[
g \cdot (\wedge_{l\in M} x_l) := \wedge_{l\in M} x_{g^{-1}l}.
\]
The set of bijections $\Bij(m,M)$
carries a left $G$-action by postcomposition and a right $\Sigma_m$-action by
precomposition.
The value of a $G$-symmetric spectrum $X$ at $M$ is defined to be the $G$-space
\[
X(M) := X_m\wedge_{\Sigma_m} \Bij(m, M)^+,
\]
where one identfies $x\wedge f\sigma$ with $\sigma_*(x)\wedge f$ whenever $\sigma\in \Sigma_m$ and $G$ acts diagonally.
The $G\times \Sigma_n$-space $X({\bf{n}})$ is naturally isomorphic to $X_n$, by sending $[x\wedge \phi]$ to $\phi_*(x)$.
In Section \ref{subsec: A characterization of flat cofibrations} below, it turns out to be more convenient to work with the values at ${\bf{n}}$,
so we adopt this point of view from now on.

Given two finite $G$-sets $M$ and $N$,
there is a generalized structure map
\[
\xymatrix{
\sigma_M^N\colon X(M)\wedge S^N \ar[r] & X(M\sqcup N).
}
\]
Chosen any isomorphism $\psi\colon n\rightarrow N$ ($\sigma_M^N$ does not depend
on this choice), it is given by
\[
([x\wedge f]\wedge s)\mapsto [\sigma_m^n(x\wedge \psi_*^{-1}(s))\wedge (f\sqcup \psi)].
\]
The morphism spaces of symmetric spectra equipped with the conjugation action
gives rise to the enrichment of $GSp^\Sigma$ over $G\sset$ as explained
in Section \ref{subsec: Equivariant enrichments of model categories}.
\end{subsection}

\begin{subsection}{The flat level model structure}
\label{subsec: The flat level model structure}

Now, we introduce the \emph{flat level model structure} on $GSp^\Sigma$.
For the definition of the latching objects we refer the reader to Section \ref{subsec: A characterization of flat cofibrations}.
Recall the definitions of the families $\mathcal{G}_n$ given in
Example \ref{ex: Families Gn} and the various model structures on $G\times \Sigma_n$-spaces introduced in
Section \ref{subsec: Model structures on G spaces}.

A morphism $f\colon X\rightarrow Y$ of $G$-symmetric spectra is a \emph{$G$-level equivalence} (resp. \emph{$G$-level fibration}) if, for all
$n\in \N$, the $G\times \Sigma_n$-map $f({\bf{n}})\colon X({\bf{n}})\rightarrow Y({\bf{n}})$ is a $\mathcal{G}_n$-equivalence (resp. mixed $G\times \Sigma_n$-fibration).
The morphism $f$ is a \emph{$G$-flat cofibration} if, for all $n\in \N$,
the pushout product map
\[
\xymatrix{
\nu_n(f)\colon X({\bf{n}})\cup_{L_n(X)} L_n(Y) \ar[r] & Y({\bf{n}})
}
\]
is a $G\times \Sigma_n$-cofibration.

\begin{Prop}[Flat level model structure]
\label{prop: Flat level model structure}
The classes of $G$-flat cofibrations, $G$-level fibrations and $G$-level
equivalences define a proper $G$-simplicial model
structure on the category of $G$-symmetric spectra.
\end{Prop}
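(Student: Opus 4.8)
The plan is to produce this model structure by transferring the generating (acyclic) cofibrations along the level-wise free functors, exactly as in the construction of the stable or level model structures on ordinary symmetric spectra, but carried out $\mathcal{G}_n$-equivariantly in each level. First I would set up the free-forgetful adjunctions: for each $n$, evaluation at level $n$ has a left adjoint $F_n$ (a semifree functor built from $G\times\Sigma_n$-spaces), and I would define the generating cofibrations $I^{Sp}$ and generating acyclic cofibrations $J^{Sp}$ to be $\{F_n i \mid n\geq 0,\ i\in I_{\mathcal{G}_n}\}$ and $\{F_n j \mid n\geq 0,\ j\in J_{\mathcal{G}_n}\}$, where $I_{\mathcal{G}_n}$, $J_{\mathcal{G}_n}$ are the generating (acyclic) cofibrations of the mixed $\mathcal{G}_n$-model structure on $G\times\Sigma_n$-spaces recalled in Section~\ref{subsec: Model structures on G spaces}. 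One checks by adjunction that $J^{Sp}$-inj is exactly the class of $G$-level fibrations and $I^{Sp}$-inj the acyclic $G$-level fibrations, so that the weak equivalences are the $G$-level equivalences.

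The main work is then to verify the hypotheses of the standard cofibrantly generated recognition theorem (I would cite the version in~\cite{DS} or the small-object machinery referenced in~\cite{sthomalg}). The category $GSp^\Sigma$ is complete and cocomplete (limits and colimits are formed levelwise), and smallness of the sources of $I^{Sp}$ and $J^{Sp}$ is inherited from the simplicial setting since everything is built from finitely presented $G\times\Sigma_n$-spaces. The two nontrivial verifications are: (i) that $J^{Sp}$-cof $\subseteq$ $I^{Sp}$-cof $\cap$ (level equivalences), and (ii) that the class of $G$-level equivalences satisfies two-out-of-three and is closed under the relevant retracts and transfinite compositions. For (i) the key point is that $J^{Sp}$-cofibrations are levelwise $\mathcal{G}_n$-acyclic cofibrations; this uses that the latching maps of a relative $J^{Sp}$-cell complex are themselves acyclic cofibrations, which is where the pushout-product description of $G$-flat cofibrations via $\nu_n(f)$ enters and must be matched against the cellular structure coming from the $F_n j$. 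The identification of cofibrations as precisely the maps with $\nu_n(f)$ a $G\times\Sigma_n$-cofibration for all $n$ I would defer to the characterization promised in Section~\ref{subsec: A characterization of flat cofibrations}, invoking it to translate between the abstract $I^{Sp}$-cofibrations and the stated flatness condition.

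The hardest step, and the one I expect to be the main obstacle, is pinning down the latching-object analysis that makes the acyclicity in (i) work levelwise. Because the structure maps are $\Sigma_n\times\Sigma_m$-equivariant and the families $\mathcal{G}_n$ vary with $n$, one must check that applying $F_n$ to a $\mathcal{G}_n$-acyclic cofibration produces a map that is a $\mathcal{G}_m$-acyclic cofibration after evaluation at every level $m$, not merely at level $n$; this is precisely the equivariant refinement of the classical fact that semifree acyclic cofibrations stay acyclic under the smash with spheres. I would handle this by computing the latching object $L_m(F_n K)$ explicitly in terms of induced $G\times\Sigma_m$-spaces and the sphere $S^{m-n}$, and then appealing to the compatibility of the induction functors with the families $\mathcal{G}_\bullet$ to conclude that each $\nu_m$ is a $G\times\Sigma_m$-(acyclic) cofibration.

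Finally, once the model structure exists, properness follows because pullbacks and pushouts are computed levelwise and each level lives in the proper model category of $G\times\Sigma_n$-spaces; and the $G$-simplicial structure (Definition~\ref{def: G-simplicial model category}) is checked via the equivalent pushout-product criterion, reducing the compatibility of $\otimes$ with $G$-cofibrations to the already-known $G$-simplicial structure on $G\sset$ levelwise, using that the tensoring of $G$-symmetric spectra over $\sset_G$ is defined levelwise and commutes with the latching constructions.
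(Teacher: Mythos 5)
The paper itself gives no proof of this proposition: it is quoted verbatim from \cite[Corollary 2.8.9, Proposition 2.8.11]{Hausmann}, and the construction carried out there is essentially the transfer argument you outline (semifree left adjoints to evaluation, generating sets built levelwise from the mixed $\mathcal{G}_n$-structures, the recognition theorem, then an identification of the resulting cofibration class). So your overall route is the right one; the issues are with two specific steps. The more serious one is your plan to obtain the identification of $I^{Sp}$-cofibrations with the $G$-flat cofibrations by ``deferring to the characterization promised in Section \ref{subsec: A characterization of flat cofibrations}.'' That section does not contain such a statement: Proposition \ref{prop: characterization flatness} characterizes flat cofibrations by intersection and pullback conditions on levels, and is designed solely to prove that $X^{\times S}_{\leq n-1}\rightarrow X^{\times S}$ is flat; it says nothing relating latching conditions to relative $I^{Sp}$-cell complexes. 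The identification needs its own two-sided argument: that relative $I^{Sp}$-cell complexes have latching maps $\nu_n$ which are $G\times\Sigma_n$-cofibrations (the direction you sketch via computing $L_m$ of semifree spectra), and, conversely, that every map $f$ with all $\nu_n(f)$ a $G\times\Sigma_n$-cofibration is an $I^{Sp}$-cofibration --- the standard proof exhibits $f$ as a transfinite composition of pushouts of the semifree maps $G_m(\nu_m(f))$ using the skeleton filtration. Without this converse, the recognition theorem only yields a model structure whose cofibrations are a priori a smaller class than the flat cofibrations named in the proposition.

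The second gap is that you take generating (acyclic) cofibrations $I_{\mathcal{G}_n}$, $J_{\mathcal{G}_n}$ of the mixed $\mathcal{G}_n$-model structure as given ``from Section \ref{subsec: Model structures on G spaces}.'' The paper defines the mixed structure only by lifting properties and never asserts cofibrant generation; its cofibrations agree with those of the $\mathcal{ALL}$-structure while its weak equivalences are the larger class of $\mathcal{G}_n$-equivalences, so it is a left Bousfield localization, and its generating acyclic cofibrations are not the obvious cell inclusions (nor are their sources finitely presented, contrary to your smallness claim). Producing $J_{\mathcal{G}_n}$ therefore requires localization machinery as in \cite{Hirschhorn}, or the explicit description in \cite{Hausmann}. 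With these two points supplied, the remainder of your outline is sound: the acyclicity of relative $J^{Sp}$-cell complexes reduces, via a double-coset analysis of induced spaces, to the fact that graph subgroups of $G\times\Sigma_n\times\Sigma_k$ restrict to graph subgroups on each factor; and properness and the $G$-simplicial axiom are checked levelwise, using that tensoring commutes with latching objects so that $\nu_n(i\Box f)\cong i\Box \nu_n(f)$.
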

\begin{proof}
This is~\cite[Corollary 2.8.9, Proposition 2.8.11]{Hausmann}.
\end{proof}
\end{subsection}

\begin{subsection}{The stable flat model structure and $\pi_*$-isomorphisms}
\label{subsec: The stable flat model structure and pi star isomorphisms}

We will also need the \emph{stable flat model structure} on $G$-symmetric spectra.

\begin{Def}
\label{def: G levelwise Kan}
 A $G$-symmetric spectrum $X$ is \emph{$G$-levelwise Kan} if, for all
 subgroups $H\leqslant G$ and all finite $H$-sets $M$, the space
 $X(M)^H$ is Kan.
\end{Def}

\begin{Def}
\label{def: G Omega spectrum}
 A $G$-symmetric spectrum $X$ is called \emph{$G\Omega$-spectrum} if it is $G$-levelwise Kan and, for all subgroups $H\leqslant G$ and
 all finite $H$-sets $M$ and $N$, the adjoint of the generalized structure map
 \[
  \xymatrix{
  X(M)\ar[r] & \Map_{\sset}(S^N, X(M\sqcup N))
  }
 \]
is an $H$-equivalence.
\end{Def}

\begin{Def}
 \label{def: G stable equivalence}
A map $X\rightarrow Y$ of $G$-symmetric spectra is a \emph{$G$-stable equivalence} if  for all $G$-level fibrant $G\Omega$-spectra $Z$ and
for some $G$-flat replacement $X^c\rightarrow Y^c$
the induced map
\[
\xymatrix{
[Y^c, Z]^G \ar[r] & [X^c, Z]^G
}
\]
on based $G$-homotopy classes of $G$-maps is a bijection.
\end{Def}

A \emph{$G$-stable fibration} is a map that satisfies the right lifting property
with respect to all $G$-flat cofibrations that are $G$-stable equivalences.

\begin{Thm}[Stable flat model structure]
\label{thm: Stable flat model structure}
The classes of $G$-flat cofibrations, $G$-stable equivalences and $G$-stable fibrations
define a proper $G$-simplicial model category structure on the category
of $G$-symmetric spectra. The fibrant objects are precisely the
$G$-level fibrant $G\Omega$-spectra.
\end{Thm}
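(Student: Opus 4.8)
The plan is to obtain the stable flat model structure as a left Bousfield localization of the flat level model structure of Proposition~\ref{prop: Flat level model structure}, following the functorial localization technique of Bousfield and Friedlander. Since the two structures share the same cofibrations, the $G$-flat cofibrations, and since every $G$-level equivalence is a $G$-stable equivalence, the essential task is to produce a local fibrant replacement functor, verify the localization axioms, and then match the resulting weak equivalences and fibrant objects with the $G$-stable equivalences of Definition~\ref{def: G stable equivalence} and the $G$-level fibrant $G\Omega$-spectra of Definition~\ref{def: G Omega spectrum}, respectively.

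First I would construct a stabilization functor $Q$ together with a natural transformation $\eta\colon \id \to Q$. The construction proceeds in two stages: apply a functorial $G$-level fibrant replacement $L$ coming from the flat level model structure, and then stabilize by forming a suitable (homotopy) colimit of shifted and looped spectra, so that $QX$ is $G$-levelwise Kan and the adjoints $QX(M) \to \Map_{\sset}(S^N, QX(M\sqcup N))$ are $H$-equivalences for all $H \leqslant G$ and all finite $H$-sets $M, N$. The intended outcome is that the $Q$-local objects, namely those $X$ for which $\eta_X$ is a $G$-level equivalence, are exactly the $G$-level fibrant $G\Omega$-spectra.

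With $Q$ in hand, I would verify the three axioms of the Bousfield--Friedlander localization theorem of~\cite{BF}: that $Q$ preserves $G$-level equivalences; that the two maps $\eta_{QX}$ and $Q\eta_X$ from $QX$ to $QQX$ are $G$-level equivalences; and the pullback (``cube'') axiom relating $Q$-equivalences to $Q$-fibrations, which simultaneously yields right properness. The machinery then produces a model structure whose cofibrations are the $G$-flat cofibrations, whose weak equivalences are the $Q$-equivalences (maps $f$ with $Qf$ a $G$-level equivalence), and whose fibrant objects are the $Q$-local objects. To finish, I would identify the $Q$-equivalences with the $G$-stable equivalences: using the characterization of weak equivalences by mapping into fibrant objects (Lemma~\ref{lem: Detection of weak equivalences}) together with the identification of the local objects as the $G\Omega$-spectra, a map $f$ is a $Q$-equivalence if and only if it induces $G$-equivalences on $\Map_{\mathcal{C}}(-, Z)$ for all such $Z$, and on applying $\pi_0$ of $G$-fixed points this recovers the bijections $[Y^c, Z]^G \to [X^c, Z]^G$. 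Left properness is inherited from the level structure since the cofibrations are unchanged and that structure is proper, while $G$-simpliciality follows from that of the level structure because the relevant pushout-product and cotensor conditions involve only cofibrations and fibrant, hence level-fibrant, test objects.

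The main obstacle I expect is the construction of $Q$ and the verification of the cube axiom in the symmetric, equivariant setting. In contrast to sequential spectra, a naive telescope of loop-shifts of a symmetric spectrum need not produce a $G\Omega$-spectrum, because $\pi_*$-isomorphisms and stable equivalences diverge for symmetric spectra; care is therefore needed to ensure that $QX$ is genuinely local, and this is where the interplay of the symmetry action, the families $\mathcal{G}_n$, and the fixed-point conditions for all $H \leqslant G$ concentrates. Verifying the pullback axiom equivariantly --- that $Q$-equivalences are stable under base change along $Q$-fibrations at every fixed-point level --- is the technically heaviest point, and is what ultimately secures both the characterization of the fibrations by lifting and the right properness of the stable flat model structure.
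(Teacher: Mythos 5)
Your plan hinges on producing a stabilization functor $Q$ (level fibrant replacement followed by a telescope of shifted loop spectra) whose local objects are the $G$-level fibrant $G\Omega$-spectra and whose $Q$-equivalences are the $G$-stable equivalences, and then invoking the Bousfield--Friedlander localization theorem. This is exactly the step that fails for \emph{symmetric} spectra, equivariantly or not, and it cannot be repaired by ``care'': the transition maps in the telescope $\colim_n \Omega^n sh^{\bf{n}} X$ necessarily involve the cycle permutations that move the newly added shift coordinate past the old ones, so the colimit computes a \emph{twisted} colimit of homotopy groups rather than the naive stabilization; as a consequence $QX$ is in general not a $G\Omega$-spectrum, and the axiom that $\eta_{QX}$ and $Q\eta_X$ be level equivalences breaks down. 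Even at the level of weak equivalences the approach aims at the wrong class: maps $f$ such that $Qf$ is a level equivalence are the $\pi_*$-isomorphisms, and for symmetric spectra these form a \emph{strictly} smaller class than the stable equivalences of Definition~\ref{def: G stable equivalence} --- the standard counterexample being the map from the free symmetric spectrum on $S^1$ in level $1$ to $\Sp$ adjoint to the identity, which is a stable equivalence but not a $\pi_*$-isomorphism (the paper records only the one-way inclusion: $\pi_*$-isomorphisms are $G$-stable equivalences). This divergence is precisely why Bousfield--Friedlander localization works for $\Gamma$-spaces and sequential spectra but was abandoned by Hovey--Shipley--Smith for symmetric spectra; you name the obstacle in your last paragraph, but since your entire construction of $Q$ is the naive telescope, naming it does not remove it.

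For comparison, the paper does not run any localization machinery of this kind: it cites Hausmann's thesis (Theorem 2.12.5 and Proposition 2.12.6), where the stable flat model structure is built in the Hovey--Shipley--Smith style --- stable equivalences are \emph{defined} by mapping into $G$-level fibrant $G\Omega$-spectra (as in Definition~\ref{def: G stable equivalence}), a set of generating acyclic cofibrations is exhibited explicitly (level ones together with pushout products built from mapping cylinders of the maps comparing free spectra in adjacent levels), and the factorizations come from the small object argument; the identification of fibrant objects and right properness are then proved directly, using that stable equivalences between $G\Omega$-spectra are level equivalences and that $\pi_*$-isomorphisms are stable equivalences. If you want a localization-flavoured proof, the viable route is left Bousfield localization of the flat level model structure at a \emph{set} of maps (in the style of~\cite{Hirschhorn}, as the paper itself does for $\Gamma$-$G$-spaces in Section 6), not the $Q$-functor method; but then left properness and the fibrant objects come for free while right properness still requires a separate argument.
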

\begin{proof}
\cite[Theorem 2.12.5, Proposition 2.12.6]{Hausmann}.
\end{proof}

An important fact is that $\pi_*$-isomorphisms as defined below are
$G$-stable equiva-lences (cf.~\cite[Theorem 2.10.19]{Hausmann}).
To this end, let $\mathcal{U}$ be a complete $G$-set universe, that is a countably
infinite $G$-set with the property that any finite $G$-set embeds
infinitely often disjointly in it.
We denote by $s(\mathcal{U})$ the set of all finite $G$-subsets of $\mathcal{U}$,
partially ordered by inclusion.
For all $n\geq 0$, a $G$-symmetric spectrum $X$ gives rise to a functor
\[
\xymatrix{
s(\mathcal{U})\ar[r] & \text{Sets},\ M\mapsto [|S^{{\bf{n}}\sqcup M}|, |X(M)|]_*^G,
}
\]
where $|-|$ denotes geometric realization.
Here, an inclusion $M\subset N$ sends a map $f\colon |S^{{\bf{n}}\sqcup M}|\rightarrow |X(M)|$ to the composition
\[
\resizebox{12,5cm}{!}{
\xymatrix{
|S^{{\bf{n}}\sqcup N}|\cong |S^{{\bf{n}}\sqcup M}|\wedge |S^{N-M}|\ar[r]& |X(M)|\wedge |S^{N-M}|\cong |X(M)\wedge S^{N-M}|\ar[r]& |X(N)|,
}
}
\]
where the last map is the geometric realization of the generalized structure map $\sigma_M^{N-M}$.
For $n\geq 0$, we define
\[
 \pi_n^{G, \mathcal{U}}(X) := \colim_{M\in s(\mathcal{U})}[|S^{{\bf{n}}\sqcup M}|, |X(M)|]_*^G.
\]
In order to define the negative homotopy groups, for a $G$-symmetric spectrum $X$ and a finite $G$-set $M$,
we define the shift $sh^MX$ by
\[
(sh^MX)({\bf{n}}) : = X(M\sqcup {\bf{n}}),
\]
where the structure maps are induced by the structure maps of $X$
(cf.~\cite[Definition 2.3.2]{Hausmann}).
Then, for $n<0$, we set
\[
\pi_n^{G, \mathcal{U}}(X) := \pi_0^{G, \mathcal{U}}(sh^{\bf{-n}}X).
\]

\begin{Def}
\label{def: pi * isomorphism}
 A map $f\colon X\rightarrow Y$ of $G$-symmetric spectra is a \emph{$\pi_*$-isomorphism}
 if for all subgroups $H\leqslant G$ and all $n\in\Z$ and some (hence any)
 complete $G$-set universe $\mathcal{U}$ the map
 \[
  \xymatrix{
  \pi_n^{H,\mathcal{U}}f\colon \pi_n^{H,\mathcal{U}}(X)\ar[r] & \pi_n^{H,\mathcal{U}}(Y)
  }
 \]
is an isomorphism, where $X$ and $Y$ are considered as $H$-symmetric spectra via restriction and $\mathcal{U}$ is considered as a
complete $H$-set universe via restriction.
\end{Def}

The relevance of the flat model structure is that it is compatible with the smash product.
Here we we endow the smash product $X\wedge Y$ of two $G$-symmetric spectra
with the diagonal $G$-action.

\begin{Prop}
\label{prop: Smashing with flat G symmetric spectrum preserves level equivalences and pi * isomorphisms}
If a $G$-symmetric spectrum $X$ is $G$-flat then $X\wedge - $ preserves (acyclic) $G$-flat cofibrations, $G$-level equivalences and $\pi_*$-isomorphisms.
\end{Prop}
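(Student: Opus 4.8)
The statement has three parts, and I would treat them in increasing order of difficulty, with the monoidality of the flat model structures as the common engine. For the cofibration claim, observe that since $X$ is $G$-flat the map $\ast\to X$ is a $G$-flat cofibration, and for any $G$-flat cofibration $f\colon A\to B$ the map $X\wedge f$ is exactly the pushout product $(\ast\to X)\Box f$: indeed $\ast\wedge A=\ast\wedge B=\ast$, so the pushout of $\ast\leftarrow\ast\to X\wedge A$ is $X\wedge A$ and the pushout product map is $X\wedge A\to X\wedge B$. The flat level and stable flat structures of Proposition~\ref{prop: Flat level model structure} and Theorem~\ref{thm: Stable flat model structure} satisfy the pushout product axiom (cf.~\cite{Hausmann}), so $X\wedge f$ is again a $G$-flat cofibration, acyclic as soon as $f$ is. This also shows $X\wedge-$ sends acyclic $G$-flat cofibrations to $G$-level equivalences.

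For the preservation of $G$-level equivalences, Ken Brown's lemma applied to the previous observation already gives preservation of $G$-level equivalences between $G$-flat spectra. To upgrade this to all $G$-level equivalences I would filter $X$ by the skeleta $\sk_d X$ associated with the flat cofibration $\ast\to X$; flatness of $X$ guarantees that the successive subquotients are semifree spectra $F_d(\nu_d)$ on the latching cofibres $\nu_d=\nu_d(\ast\to X)$, which are cofibrant $G\times\Sigma_d$-spaces. For a semifree spectrum $F_d(K)$ on a cofibrant $G\times\Sigma_d$-space $K$, the smash product $F_d(K)\wedge Y$ is, levelwise, an induced smash product of $K$ with the values of $Y$ over $\Sigma_d\times\Sigma_{n-d}\subset\Sigma_n$; since the families $\mathcal{G}_n$ are precisely the graph subgroups appearing in the free $\Sigma$-actions on $\Bij(\mathbf{m},M)$, a $G$-level equivalence in $Y$ is carried to a $\mathcal{G}_n$-equivalence after smashing with the cofibrant space $K$ and inducing up, using the $G$-simpliciality recorded in Proposition~\ref{prop: Flat level model structure}. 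Passing from these subquotients back to $X$ is then a gluing argument along the skeletal filtration, where the flat cofibrancy of the filtration maps lets me apply Reedy's patching lemma~\cite[3.8]{BF} levelwise together with stability of $G$-level equivalences under sequential colimits along flat cofibrations.

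For $\pi_*$-isomorphisms I would reduce to a triviality statement via cofibre sequences. Because $X$ is flat, $X\wedge-$ preserves flat cofibrations and hence cofibre sequences, and there is the usual long exact sequence of the groups $\pi_\ast^{H,\mathcal{U}}$. Since a map $f$ is a $\pi_*$-isomorphism exactly when its mapping cone $Cf$ has vanishing $\pi_\ast^{H,\mathcal{U}}$ for all $H\leqslant G$, and $C(X\wedge f)\cong X\wedge Cf$, it suffices to prove that $X\wedge C$ is $\pi_*$-trivial whenever $C$ is. Filtering $X$ by its skeleta once more and smashing the skeletal cofibre sequences with $C$, the long exact sequence reduces this to the semifree case $F_d(K)\wedge C$, whose homotopy groups are computed from those of the space-level smash $K\wedge C$ up to a dimension shift; smashing a $\pi_*$-trivial spectrum with a cofibrant $G\times\Sigma_d$-space preserves $\pi_*$-triviality by induction over the cells of $K$ (using the behaviour of $\pi_\ast^{H,\mathcal{U}}$ under wedges, suspensions, and the colimit over $s(\mathcal{U})$ that defines them). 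The formal parts here—the pushout product, Ken Brown's lemma, and the long exact sequences—are routine; the genuine obstacle is the two skeletal inductions, namely identifying the flat subquotients with semifree spectra on cofibrant $G\times\Sigma_d$-spaces and controlling the coend defining $\wedge$ level by level so that $\mathcal{G}_n$-equivalences and $\pi_*$-triviality survive induction from $\Sigma_d\times\Sigma_{n-d}$ to $\Sigma_n$ and passage to the colimit over $s(\mathcal{U})$. Flatness of $X$ is exactly what renders these latching and induction maps cofibrations, so that the gluing lemmas apply; the care lies in the equivariant bookkeeping.
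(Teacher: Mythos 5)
Your proposal is correct and follows essentially the same route as the paper's (very terse) proof: the skeleton filtration of the $G$-flat spectrum $X$ with semifree subquotients on the latching cofibres, plus induction and gluing for the level statements, and a cofibre-sequence/cell-induction argument for $\pi_*$-isomorphisms in exactly the spirit of~\cite[Proposition 2.3.29]{Stolz}, which the paper cites for that part. The only (harmless) deviation is that you deduce preservation of (acyclic) $G$-flat cofibrations from the identification $X\wedge f=(\ast\to X)\,\square\, f$ together with the pushout product axiom for the flat level model structure quoted from~\cite{Hausmann}, whereas the paper runs the same skeleton-filtration induction for that statement as well.
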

\begin{proof}
For the first three statements one uses the skeleton filtration of a $G$-flat spectrum $X$ and induction.
The fact the $X\wedge -$ preserves $\pi_*$-isomorphisms can be proven as in~\cite[Proposition 2.3.29]{Stolz}.
\end{proof}

\end{subsection}
\end{section}

\newpage
\begin{section}{\texorpdfstring{$\Gamma$-$G$-spaces}{Gamma-G-spaces} and two level model structures}
\label{sec: Equivariant Gamma spaces and two level model structures}
In this section we introduce $\Gamma$-$G$-spaces and the projective
and strict level model structures.

\begin{subsection}{Generalities on $\Gamma$-$G$-spaces}
\label{subsec: Generalities on Gamma G spaces}

\begin{Def}
\label{def: Gamma}
 We define $\Gamma$ to be the category with objects the based finite sets $n^+ = \{1, \ldots, n\}\sqcup \{+\}$
 based at $\{+\}$ together with basepoint preserving maps.
\end{Def}
\begin{Def}
 \label{def: Gamma G space}
A $\Gamma$-$G$-space is a functor $A\colon \Gamma\rightarrow G\sset$,
such that $A(0^+) = *$. A map of $\Gamma$-$G$-spaces is a natural transformation of functors.
The category of $\Gamma$-$G$-spaces will be denoted by $\Gamma(G\sset)$.
\end{Def}

Given a $\Gamma$-$G$-space $A$, the value $A(X)$ at a based $G$-space $X$ is defined by
\begin{gather*}
 A(X) = \bigsqcup_{n^+\in \Gamma}\ \Map_{\sset}(n^+, X)\times A(n^+)\ /\sim,
\end{gather*}
where we divide out the equivalence relation generated by $(\phi^*f, a)\sim (f, A(\phi)(a))$
for $f\colon n^+ \rightarrow X$, $\phi\colon k^+\rightarrow n^+$ and $a\in A(k^+)$.
This space is based at the equivalence class of $(*, *)$.

\begin{Rmk}
It seems natural to define equivariant $\Gamma$-spaces as equivariant functors
from the category $\Gamma_G$ of finite based $G$-sets with based maps to the category $\sset_G$,
where $G$ acts on the morphisms of the categories by conjugation.
But Shimakawa observed in~\cite[Theorem 1]{Shimakawa Note} that this
gives a category equivalent to $\Gamma(G\sset)$. The reason is precisely that the values
of an equivariant functor $A\colon \Gamma_G\rightarrow \sset_G$ on a based finite $G$-set $S^+$ can be recovered
by evaluating the underlying $\Gamma$-space on $S^+$.
\end{Rmk}

If $A$ is a based $G$-space and $X$ is an object of $\Gamma(G\sset)$, then, in level $n^+$, $A\wedge X$
and $\map_{\Gamma(\sset)}(A, X)$ are given by $A\wedge X(n^+)$ with diagonal
action and $\Map_{\sset}(A, X(n^+))$ with conjugation action respectively.
The enrichement of $\Gamma$-$G$-spaces in $G$-spaces is constructed as follows. Suppose $A$ and $B$ are $\Gamma$-$G$-spaces.
$\Map_{\Gamma(\sset)}(A, B)$ is the space with $n$-simplices consisting of the (not necessarily equivariant)
natural transformations $(\Delta^i)^+\wedge A \rightarrow B$ endowed with the conjugation action.
Passing to fixed points yields a space
$\Map_{\Gamma(\sset)}(A, B)^G$ with $0$-simplices $\Gamma(G\sset)(A, B)$.

For a $\Gamma$-$G$-space $A$ and an arbitrary based finite $G$-set $S^+$ we have a $G$-map
\begin{gather*}
\xymatrix{
 P_{S^+}\colon A(S^+)\ar[r] & A(1^+)^{S^+} = \Map_{\sset}(S^+, A(1^+)),
}
\end{gather*}
given by $(P_{S^+}(a))(s) = A(p_s)(a)$, where $p_s\colon S^+\rightarrow 1^+$ maps $s$ to $1$ and
everything else to the basepoint.

\begin{Def}
\label{def: special Gamma G space}
A $\Gamma$-$G$-space $A$ is called \emph{special} if
$P_{S^+}$ is a $G$-equivalence for all based finite $G$-sets $S^+$. 
\end{Def}

Let $A$ be special and define the fold map $\nabla\colon 2^+\rightarrow 1^+$ to be
the map sending $1$ and $2$ to $1$. The zigzag
\begin{gather*}
\xymatrix{
 A(1^+)\times A(1^+) &A(2^+) \ar[l]^-{\simeq_G}_-{p_1\times p_2} \ar[r]^-{\nabla} &A(1^+)
}
\end{gather*}
induces the structure of a commutative monoid on the set of path components of the $H$-fixed points
$\pi_0^H(A(1^+))$.

\begin{Def}
A special $\Gamma$-$G$-space $A$ is called \emph{very} \emph{special}
if, in addition, $A(1^+)$ is grouplike. That is, $\pi_0^H(A(1^+))$ with the composition law just defined is a group
for all subgroups $H\leqslant G$.
\end{Def}
\end{subsection}

\begin{subsection}{Level model structures}
Now we introduce two level model structures on the category of $\Gamma$-$G$-spaces.

\begin{subsubsection}{The projective model structure}
\label{subsubsec: The projective model structure}

There is the projective model structure on $\Gamma(G\sset)$, which was also employed in~\cite{Santhanam}.
Recall the sets $I$ and $J$ defined in Section \ref{subsec: Model structures on G spaces}.
A morphism of $\Gamma$-$G$-spaces $f\colon A\rightarrow B$ is called
\emph{level equivalence}
(resp. \emph{level fibration}) if
$f(S^+)$ is a  $G$-equivalence (resp. $G$-fibration) for all based
finite $G$-sets $S^+$.
A \emph{projective cofibration} is a map which satisfies the
left lifting property with respect to all level fibrations which are in addition level equivalences.

We define $\Gamma I$ and $\Gamma J$ to be the sets
consisting of the maps $i\wedge\Gamma_G(S^+, -)$ and $j\wedge\Gamma_G(S^+, -)$
respectively, where $S^+$ runs over a set of representatives of
isomorphism classes of based finite $G$-sets and $i\in I$ and $j\in J$ respectively.

The proof of the following result is standard.
\begin{Thm}[Projective model structure]
\label{thm: projective model structure}
The classes of level fibrations,
level equivalences and projective cofibrations define a cofibrantly generated proper $G$-simplicial
model category structure on the category of $\Gamma$-$G$-spaces. The sets $\Gamma I$ and $\Gamma J$ can be taken as sets of
generating cofibrations and generating acyclic cofibrations respectively.
\end{Thm}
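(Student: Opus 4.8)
The plan is to obtain the model structure by transferring the ($\mathcal{ALL}$-)model structure on $G\sset$ along the evaluation functors, that is, to apply Kan's recognition theorem for cofibrantly generated model categories to the candidate sets $\Gamma I$ and $\Gamma J$. The starting point is the enriched Yoneda lemma: for every based finite $G$-set $S^+$ the represented $\Gamma$-$G$-space $\Gamma_G(S^+,-)$ is automatically reduced (since $\Gamma_G(S^+,0^+)=\ast$) and satisfies a natural $G$-isomorphism $\Map_{\Gamma(\sset)}(\Gamma_G(S^+,-),A)\cong A(S^+)$. Consequently the functor $K\mapsto K\wedge\Gamma_G(S^+,-)$ is left adjoint to evaluation at $S^+$, and since colimits in $\Gamma(G\sset)$ are computed levelwise, this left adjoint commutes with all colimits.

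This adjunction immediately identifies the injective classes. By adjointness a map $f$ has the right lifting property against $\Gamma I$ (resp.\ $\Gamma J$) if and only if each $f(S^+)$ has the right lifting property against $I$ (resp.\ $J$) in $G\sset$; by the defining property of $I$ and $J$ this means $f$ is a level acyclic fibration (resp.\ a level fibration). Smallness of the domains of $\Gamma I$ and $\Gamma J$ holds because the domains of $I$ and $J$ are small in $G\sset$ and the left adjoints preserve the relevant filtered colimits, which are levelwise. The remaining and only substantial hypothesis is the \emph{acyclicity condition}: every relative $\Gamma J$-cell complex must be a level equivalence.

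To verify this, evaluate a pushout of a map in $\Gamma J$ at a level $n^+$. Because the left adjoints and all colimits are levelwise, the result is a pushout in $G\sset$ along a map of the form $\Gamma_G(S^+,n^+)\wedge j$ with $j\in J$. Here $\Gamma_G(S^+,n^+)$ is a based finite $G$-set, and smashing a based finite $G$-set with a generating acyclic cofibration of $G\sset$ again yields an acyclic $G$-cofibration: decomposing the $G$-set into orbits and using $(G/H)_+\wedge(G/K)_+\cong\bigvee_L (G/L)_+$ over the double cosets reduces the claim to the generators themselves. Hence each pushout of a $\Gamma J$-map is a levelwise acyclic $G$-cofibration, and since acyclic $G$-cofibrations in $G\sset$ are closed under pushout and transfinite composition (fixed points commute with filtered colimits, and $\sset$ has this property), every relative $\Gamma J$-cell complex is a level acyclic cofibration, in particular a level equivalence.

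With these checks Kan's recognition theorem produces the cofibrantly generated model structure with generating sets $\Gamma I$ and $\Gamma J$; the projective cofibrations are exactly the $\Gamma I$-cofibrations by their defining lifting property. Properness is inherited from $G\sset$ because weak equivalences, fibrations, and the relevant pullbacks and pushouts are all detected levelwise and $G\sset$ is proper. Finally, to see that the structure is $G$-simplicial I would verify the pushout-product criterion (condition (b) in the characterization of $G$-simplicial structures from Section~\ref{subsec: Equivariant enrichments of model categories}): the pushout-product of a projective cofibration with a $G$-cofibration of $\sset_G$ is again a projective cofibration, acyclic if either factor is. Since the tensoring of $\Gamma$-$G$-spaces over $\sset_G$ is levelwise, this reduces to the already-established fact that $G\sset$ is $G$-simplicial, applied on each level. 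I expect the acyclicity condition---specifically the stability of acyclic $G$-cofibrations under smashing with based finite $G$-sets---to be the only step requiring genuine argument; everything else is the formal machinery of cofibrantly generated diagram categories.
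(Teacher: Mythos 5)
Your argument is correct and is exactly the standard transfer/recognition-theorem proof that the paper omits (its entire proof is the line ``The proof of the following result is standard''): identify $\Gamma I$-inj and $\Gamma J$-inj by the adjunction $K\wedge\Gamma_G(S^+,-)\dashv \mathrm{ev}_{S^+}$, check smallness, and prove acyclicity of $\Gamma J$-cell maps by decomposing $j\wedge\Gamma_G(S^+,T^+)$ into orbit-wise generators. The only point to tighten is that projective cofibrations are \emph{not} a levelwise-detected class, so your levelwise reductions for left properness and for the $G$-simplicial axiom need one intermediate step: for properness, observe that projective cofibrations are at least levelwise $G$-cofibrations (the generators are, and this persists under cobase change, transfinite composition and retracts); for the simplicial axiom, either verify condition (b) on generators only, using $i\Box\bigl(i'\wedge\Gamma_G(S^+,-)\bigr)\cong (i\Box i')\wedge\Gamma_G(S^+,-)$ together with the closure properties of the pushout product, or replace (b) by the equivalent cotensor criterion (c), which genuinely is a levelwise condition since level fibrations and the cotensor are both defined levelwise.
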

\end{subsubsection}

\begin{subsubsection}{The strict model structure}
\label{subsubsec: The strict model structure}
We will now introduce a model structure which reduces to
the model structure due to
Bousfield and Friedlander (cf.~\cite{BF}) if $G$ is the trivial group.
In order to introduce this model structure we need
\begin{Def}
 \label{def: skn cskn}
The $n$th \emph{skeleton} of a $\Gamma$-$G$-space $A$ is the
$\Gamma$-$G$-space given by
\begin{equation*}
 (\sk_nA)(m^+) := \colim_{k^+\rightarrow m^+,\ k\leq n}\ A(k^+)
\end{equation*}
in level $m^+$.
Dually, the $n$th \emph{coskeleton} of $A$ is defined to be the $\Gamma$-$G$-space
given by
\begin{equation*}
 (\csk_nA)(m^+) := \lim_{m^+\rightarrow j^+,\ j\leq n}\ A(j^+).
\end{equation*}
\end{Def}
\begin{Rmk}
More conceptual definitions of these two functors appear in Section \ref{subsec: The strict model structure for Gamma G spaces},
where they occur naturally when maps between $\Gamma$-$G$-spaces are constructed inductively.
\end{Rmk}

There are natural maps
$(\sk_n A)\rightarrow A \rightarrow (\csk_n A)$.
Hence, a map $f\colon X\rightarrow Y$ of $\Gamma$-$G$-spaces induces, for all $n\geq 0$,
maps
\begin{gather*}
\xymatrix{
i_n(f)\colon (\sk_{n-1}Y)(n^+)\cup_{(\sk_{n-1} X)(n^+)} X(n^+)\ar[r] & Y(n^+)
}
\end{gather*}
and
\begin{gather*}
\xymatrix{
p_n(f)\colon X(n^+)\ar[r] & (\csk_{n-1}X)(n^+)\times_{(\csk_{n-1}Y)(n^+)}Y(n^+).
}
\end{gather*}

Then $f$ is called \emph{strict cofibration} (resp. \emph{strict fibration}) if, for all $n\geq 0$, the map $i_n(f)$
(resp. $p_n(f)$) is a $\mathcal{G}_n$-cofibration (resp. $\mathcal{G}_n$-fibration).
The map $f$ is called \emph{strict equivalence} if it is levelwise a $\mathcal{G}_n$-equivalence.

\begin{Rmk}
Note, for any map of $\Gamma$-$G$-spaces $f\colon A\rightarrow B$, being
a strict equivalence amounts to saying that for any based finite $H$-set $S^+$,
$f(S^+)$ is an $H$-equivalence. Yet, it turns out that this is a little bit redundant.
Indeed, assume the seemingly weaker condition that $f(S^+)$ is a $G$-equivalence for all pointed
$G$-sets $S^+$ and let $T^+$ be any based finite $H$-set. But $T^+$ is an $H$-retract of a $G$-set $Q^+$ and $H$-equivalences
are closed under retracts, hence $f(T^+)$ is an $H$-equivalence.
In particular, the strict equivalences coincide with the level equivalences defined in Section \ref{subsubsec: The projective model structure}.
\end{Rmk}
Let $\mathcal{G}^1_n\sset$ be the category of $G\times \Sigma_n$-spaces with the $\mathcal{G}_n$-model structure 
and let $\mathcal{G}^2_n\sset := (G\times \Sigma_n)\sset$ be the category of $G\times \Sigma_n$-spaces with the $\mathcal{ALL}$-model structure.
The assumptions of Theorem \ref{thm: gen strict model structure} are
satisfied. Hence we have
\begin{Thm}[Strict model structure]
\label{thm: strict model structure}
The classes of strict equivalences, strict fibrations and strict cofibrations
define a proper $G$-simplicial model category structure on the category of $\Gamma$-$G$-spaces.
\end{Thm}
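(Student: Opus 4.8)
The plan is to realize the strict structure as a special case of the general strict model structure theorem (Theorem~\ref{thm: gen strict model structure}), so that the work reduces to verifying that theorem's hypotheses for the two families $\mathcal{G}^1_n\sset$ and $\mathcal{G}^2_n\sset$. The conceptual framework is that $\Gamma$ is a generalized Reedy category: the degree of $n^+$ is $n$, the degree-raising maps are the injections, the degree-lowering maps are the surjections, and the automorphisms of $n^+$ form $\Sigma_n$. With this identification $\sk_n$ and $\csk_n$ of Definition~\ref{def: skn cskn} are the genuine skeleton and coskeleton, and the maps $i_n(f)$ and $p_n(f)$ are exactly the relative latching and matching maps, landing in the category of $G\times\Sigma_n$-spaces. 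Strict cofibrations, fibrations and equivalences are then the Reedy cofibrations, Reedy fibrations and levelwise equivalences measured in the graph-subgroup model structure $\mathcal{G}_n$; note that testing cofibrations in $\mathcal{G}_n$ forces $\Sigma_n$ to act freely on the cells adjoined at level $n$, which is the equivariant incarnation of the freeness condition in~\cite{BF}.

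First I would dispatch the formal axioms. The category $\Gamma(G\sset)$ is complete and cocomplete with limits and colimits computed levelwise, giving $\mathbf{MC 1}$; and because strict equivalences are detected levelwise by $\mathcal{G}_n$-equivalences, the two-out-of-three axiom $\mathbf{MC 2}$ and the retract axiom $\mathbf{MC 3}$ follow immediately from the corresponding properties in each $\mathcal{G}^1_n\sset$. The essential structural inputs are that, for every $n$, both $\mathcal{G}^1_n\sset$ (the $\mathcal{G}_n$-structure) and $\mathcal{G}^2_n\sset$ (the $\mathcal{ALL}$-structure) are cofibrantly generated and proper, and that the identity functor $\mathcal{G}^1_n\sset\to\mathcal{G}^2_n\sset$ is left Quillen, as recorded in Section~\ref{subsec: Model structures on G spaces}. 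This last compatibility is what lets the cofibration side, tested in the finer $\mathcal{G}_n$-structure, and the factorizations, carried out against the ambient $\mathcal{ALL}$-structure, be run without conflict.

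The substance of the proof is the factorization axiom $\mathbf{MC 5}$ and the lifting axiom $\mathbf{MC 4}$, both established by induction on the Reedy degree. Given $f\colon X\to Y$, one constructs the intermediate object degree by degree: having built the factorization below degree $n$, one factors at level $n$ the canonical map from the relative latching object to the relative matching object in the appropriate $\mathcal{G}_n$-model structure --- into a strict cofibration followed by an acyclic fibration for the one factorization, and into an acyclic cofibration followed by a strict fibration for the other --- and defines the degree-$n$ value accordingly. Reedy's patching lemma (cf.~\cite[3.8]{BF}) ensures that these levelwise factorizations glue to genuine strict cofibrations, fibrations and equivalences, with the small object argument available inside each $\mathcal{G}^i_n\sset$ when factorizations are needed there. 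The lifting axiom $\mathbf{MC 4}$ is handled by the same scheme: a lift against a strict (acyclic) fibration is assembled degree by degree as a lift of latching data in $\mathcal{G}^1_n\sset$, using that $f$ is simultaneously a strict cofibration and a strict equivalence precisely when each $i_n(f)$ is a $\mathcal{G}_n$-acyclic cofibration.

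The hard part will be the coherence between the two model structures forced by the automorphisms $\Sigma_n$. Since equivalences and cofibrations are measured in the graph-subgroup structure $\mathcal{G}_n$ while the inductive factorizations must stay compatible with the full $\mathcal{ALL}$-structure, one must verify that the two resulting descriptions of the acyclic maps agree along the induction --- equivalently, that a map which is both a strict cofibration and a strict equivalence has the left lifting property against all strict fibrations, and dually. This is exactly the compatibility that the hypotheses of Theorem~\ref{thm: gen strict model structure} are designed to encode, and checking it comes down to showing that the relative latching and matching maps transform correctly under the left Quillen identity $\mathcal{G}^1_n\sset\to\mathcal{G}^2_n\sset$ and that properness of both structures licenses the patching lemma at each stage. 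Once this is in place, the $G$-simplicial property and properness of the strict structure transfer from the levelwise $G$-simplicial and proper structures on $G\times\Sigma_n$-spaces together with the enrichment of $\Gamma$-$G$-spaces described in Section~\ref{subsec: Generalities on Gamma G spaces}.
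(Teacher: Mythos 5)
Your overall route is exactly the paper's: take $\mathcal{G}^1_n\sset$ to be the $\mathcal{G}_n$-model structure, $\mathcal{G}^2_n\sset$ the $\mathcal{ALL}$-model structure, and invoke Theorem~\ref{thm: gen strict model structure}. The gap is that you never verify --- or even state --- the one hypothesis of that theorem with real content, namely Assumption~(c): that
\[
\Inj_*(l^+, n^+)^+\wedge_{\Sigma_l}-\colon \mathcal{G}^1_l\sset \rightleftarrows \mathcal{G}^2_n\sset
\quad\text{and}\quad
\Surj_*(n^+, l^+)^+\wedge_{\Sigma_n}-\colon \mathcal{G}^1_n\sset \rightleftarrows \mathcal{G}^1_l\sset
\]
are Quillen adjunctions. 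In its place you offer the compatibility that the identity functor $\mathcal{G}^1_n\sset\to\mathcal{G}^2_n\sset$ is left Quillen. That statement is true (it is recorded in Section~\ref{subsec: Model structures on G spaces}), but it concerns a single group $G\times\Sigma_n$, whereas the inductive engine of the general theorem (Lemmas~\ref{lem: sk cof} and~\ref{lem: csk fib}, which feed Proposition~\ref{prop: diagram skeleta} and its dual into Reedy's patching lemma) applies the functors $\Inj_*(l^+,n^+)^+\wedge_{\Sigma_l}-$ and $\Map_{\sset}(\Surj_*(n^+,l^+)^+,-)^{\Sigma_l}$ to the relative latching and matching maps in degrees $l<n$; these pass between $G\times\Sigma_l$-spaces and $G\times\Sigma_n$-spaces, and no property of the identity adjunction can substitute for them. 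Checking Assumption~(c) is precisely where the combinatorics of $\Gamma$ meets the graph families: for the $\Surj_*$-adjunction one must show that for a graph subgroup $\Gamma\leqslant G\times\Sigma_n$ the $G\times\Sigma_l$-set $\Surj_*(n^+,l^+)\times_{\Sigma_n}(G\times\Sigma_n)/\Gamma$ again has isotropy in $\mathcal{G}_l$, and this uses surjectivity in an essential way: if $(1,\tau)$ fixes $[s,x]$, then $\tau s=s\pi$ for some $\pi\in\Sigma_n$ fixing $x$, the graph condition on $\Gamma$ forces $\pi=1$, and surjectivity of $s$ then forces $\tau=1$. Nothing of this sort appears in your write-up, so the reduction you announce is not actually completed.

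Two further remarks. First, your sketch of the induction has the two structures in the wrong roles: in the proof of Theorem~\ref{thm: gen strict model structure} the level-$n$ factorizations are performed in $\mathcal{G}^1_n\sset$, not ``against the ambient $\mathcal{ALL}$-structure''; the auxiliary structure $\mathcal{G}^2_n\sset$ enters only through Lemma~\ref{lem: sk cof} (the maps $(\sk_l A)(n^+)\to(\sk_l B)(n^+)$ are $\mathcal{G}^2_n$-cofibrations) together with Assumption~(b), which converts $\mathcal{G}^2_n$-equivalences into $\mathcal{G}^1_n$-equivalences. Second, you are right to treat properness separately, since it is not part of the statement of Theorem~\ref{thm: gen strict model structure}; your levelwise argument (strict equivalences are levelwise, strict fibrations and cofibrations are in particular levelwise (co)fibrations, pullbacks and pushouts are computed levelwise, and each levelwise structure is proper) is the correct way to supply it, and is in fact more explicit than the paper on this point.
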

\end{subsubsection}

\begin{subsubsection}{Comparison of the projective and the strict model structure}
\label{subsubsec: Comparison of the projective and the strict model structure}

\begin{Prop}
\label{prop: quillen equivalence projective and stict model structure}
The identity functor induces a Quillen equivalence
\begin{gather*}
\xymatrix{
 \id: \Gamma(G\sset)^{\text{projective}} \ar@<.3 ex>[r] &  \Gamma(G\sset)^{\text{strict}}: \id. \ar@<.3 ex>[l]
}
\end{gather*}
\end{Prop}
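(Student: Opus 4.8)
The plan is to show that the two model structures share the same weak equivalences and that every strict cofibration is a projective cofibration; by the standard criterion for Quillen equivalences of identity functors, this suffices. First I would establish that the weak equivalences agree. The remark immediately preceding the statement already identifies strict equivalences with level equivalences, so both model structures have exactly the same class of weak equivalences. Consequently the identity functor preserves weak equivalences in both directions, and once we know it is a Quillen adjunction, it is automatically a Quillen equivalence: a Quillen adjunction between model categories with the same weak equivalences is a Quillen equivalence precisely because the derived functors are then inverse equivalences on homotopy categories (the adjunction on homotopy categories is an equivalence iff the derived unit and counit are isomorphisms, which holds when both derived functors are identities on the common homotopy category).

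So the real content is to verify that $\id \colon \Gamma(G\sset)^{\text{projective}} \to \Gamma(G\sset)^{\text{strict}}$ is a left Quillen functor, i.e.\ that every projective cofibration is a strict cofibration, or equivalently (and more convenient) that the right adjoint $\id \colon \Gamma(G\sset)^{\text{strict}} \to \Gamma(G\sset)^{\text{projective}}$ preserves fibrations and acyclic fibrations. Since the weak equivalences coincide, it is enough to check that every strict fibration is a level fibration and that this is compatible with acyclicity. I would prove this by examining the defining maps $p_n(f)$. A strict fibration has each $p_n(f)$ a $\mathcal{G}_n$-fibration, and I want to deduce that each $f(S^+)$ is a $G$-fibration. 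The key observation is that a level fibration can be detected one level at a time using the coskeleton filtration: the map $f(n^+) \colon X(n^+) \to Y(n^+)$ factors through the matching-type square whose comparison map is $p_n(f)$, and by induction on $n$ one reconstructs $f(n^+)$ from $p_n(f)$ together with the lower coskeletal data, each step preserving the (mixed, hence underlying $G$-) fibration property. More precisely, $\mathcal{G}_n$-fibrations are in particular $G$-fibrations after forgetting the $\Sigma_n$-action, and the natural isomorphism $X(\mathbf{n}) \cong X_n$ lets one pass between evaluation on $n^+$ and the $G\times\Sigma_n$-structure.

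The main technical obstacle is the inductive comparison between the levelwise fibration condition (projective) and the coskeletal fibration condition (strict). Concretely, I expect the hard part to be verifying that being a $\mathcal{G}_n$-fibration on each $p_n(f)$ forces $f$ to be a $G$-fibration on every value $f(S^+)$, for arbitrary finite $G$-sets $S^+$ and not merely on the standard $\Gamma$-objects $n^+$; this requires relating the value $A(S^+)$ at a genuine $G$-set to the values $A(n^+)$ via the $\mathcal{G}_n$-model structure, using that the relevant isotropy lands in $\mathcal{G}_n$ exactly because the $\Sigma_n$-action encodes the way a $G$-set of cardinality $n$ sits inside $n^+$. I would handle this by the same retract argument used in the remark: any based finite $H$-set is an $H$-retract of a restricted $G$-set, and fibrations are closed under retracts, so it suffices to treat the $G$-sets appearing as the $n^+$, where the $\mathcal{G}_n$-structure directly applies. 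Once the fibration comparison is in place, acyclicity follows for free from the coincidence of weak equivalences, completing the verification that $\id$ is a right Quillen functor and hence that the adjunction is a Quillen equivalence.
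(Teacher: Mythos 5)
Your proposal is correct, but it runs the verification on the dual side from the paper. Both proofs begin the same way: the weak equivalences coincide, so the only content is the Quillen adjunction, and any Quillen adjunction between two model structures with the same weak equivalences is automatically a Quillen equivalence. The paper then exploits that the projective structure is cofibrantly generated: it checks that the generating (acyclic) cofibrations $i\wedge\Gamma_G(S^+,-)$ are (acyclic) strict cofibrations, which reduces to a one-line isotropy computation --- $(\sk_{n-1}\Gamma_G(S^+,-))(n^+)$ consists exactly of the non-surjective maps $S^+\to n^+$, and the $G\times\Sigma_n$-stabilizer of a surjection meets $\{1\}\times\Sigma_n$ trivially, hence lies in $\mathcal{G}_n$. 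You instead check that the right adjoint preserves fibrations: every strict fibration is a level fibration, by writing $f(n^+)$ as $p_n(f)$ followed by a base change of $(\csk_{n-1}X)(n^+)\to(\csk_{n-1}Y)(n^+)$ and showing inductively that this coskeletal map is a $\mathcal{G}_n$-fibration. That induction is exactly the equivariant analogue of Bousfield--Friedlander's Lemma 3.7, which the paper proves as Lemma~\ref{lem: csk fib} (via Reedy's patching lemma and the $\Surj_*$-adjunctions), so your route is legitimate; it costs heavier coskeletal machinery, but it needs no choice of generating cofibrations and makes explicit the useful fact that strict fibrations are level fibrations.

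Two corrections to your details. First, the passage from ``$f(n^+)$ is a $\mathcal{G}_n$-fibration'' to ``$f(S^+)$ is a $G$-fibration for every based finite $G$-set $S^+$'' cannot be done by the retract argument you invoke: a $G$-set with nontrivial action is not a $G$-retract of $n^+$ with its trivial action, so that reduction does not apply here (nor is it needed, since level fibrations only quantify over $G$-sets, not over $H$-sets for proper subgroups $H$). The correct mechanism is the other one you mention: a bijection $S\cong\{1,\dots,n\}$ identifies $A(S^+)$ with the restriction of the $G\times\Sigma_n$-space $A(n^+)$ along the graph homomorphism $g\mapsto (g,\rho(g))$, and graph subgroups lie in $\mathcal{G}_n$, so $f(S^+)^H=f(n^+)^{\{(h,\rho(h))\,\mid\,h\in H\}}$ is a Kan fibration. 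Second, strict fibrations are defined by requiring $p_n(f)$ to be a fibration in the $\mathcal{G}_n$-family model structure, not a mixed fibration; this is harmless for your argument, since the family fibration property is exactly what the graph-subgroup computation uses, but the terminology should be fixed.
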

\begin{proof}
We have already seen that the weak equivalences coincide.
Hence, we only have to check that the generating (acyclic) cofibrations
are indeed (acyclic) strict cofibrations.
In fact, if $i\colon A\rightarrow B$ is a $G$-cofibration of $G$-spaces and $S^+$ 
is any based finite $G$-set, then all simplices
not in the image of $i_n(i\wedge \Gamma_G(S^+, -))$ have isotropy contained in
$\mathcal{G}_n$, since the set $(\sk_{n-1}\Gamma_G(S^+, -))(n^+)$
consists precisely of the non-surjective maps.
\end{proof}
\end{subsubsection}

At last, we want to mention an important lemma about
the coend $A(X)$ 
of $A\in\Gamma(G\sset)$ and $X\in G\sset$.

\begin{Lemma}
\label{lem: diag}
 Suppose $f\colon A\rightarrow B$ in $\Gamma(G\sset)$ is a level equivalence,
 then, for any based $G$-space $X$, $f(X)\colon A(X) \rightarrow B(X)$
 is a $G$-equivalence.
\end{Lemma}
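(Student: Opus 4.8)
The plan is to realize $f(X)$ as the diagonal of a bisimplicial $G$-set whose rows are $G$-equivalences, and then invoke the standard fact that the diagonal of a levelwise $G$-equivalence of bisimplicial $G$-sets is a $G$-equivalence. The first step, however, is to upgrade the hypothesis: a level equivalence only provides that $f(S^+)$ is a $G$-equivalence for \emph{finite} based $G$-sets $S^+$, whereas the simplices $X_q$ of $X$ need not be finite. I would first observe that the prolongation $A(-)$ commutes with filtered colimits of based $G$-sets, since $A(S)$ is the coend $\int^{n^+}\Map_{\sset}(n^+,S)\times A(n^+)$, coends are colimits, and each $\Map_{\sset}(n^+,-)$ preserves filtered colimits of discrete based sets because $n^+$ is finite. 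As $G$ is finite, every based $G$-set $S$ is the filtered colimit of its finite based $G$-subsets, so $f(S)$ is a filtered colimit of the $G$-equivalences $f(T)$ ranging over finite $T\subseteq S$. Filtered colimits commute with $H$-fixed points and preserve weak equivalences of spaces, so $f(S)$ is a $G$-equivalence for \emph{every} based $G$-set $S$.

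Next I would identify $A(X)$ with the diagonal of the bisimplicial $G$-set $W^A$ defined by $W^A_{p,q}=A(X_p)_q$, where $X_p$ denotes the based $G$-set of $p$-simplices of $X$ and $A(X_p)$ is the prolongation above. The key computation is that colimits of simplicial sets are computed degreewise, so for each $q$
\[
A(X)_q=\int^{n^+}\Map_{\sset}(n^+,X)_q\times A(n^+)_q=\int^{n^+}\Map_{\sset}(n^+,X_q)\times A(n^+)_q=A(X_q)_q,
\]
using that $\Map_{\sset}(n^+,X)_q\cong\Map_{\sset}(n^+,X_q)$ since $n^+$ is a finite discrete based set. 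One then checks that the simplicial identities in both directions match the structure maps of $W^A$, giving a natural $G$-isomorphism $A(X)\cong\mathrm{diag}\,W^A$, and likewise $B(X)\cong\mathrm{diag}\,W^B$. Under these identifications $f(X)$ becomes the diagonal of the map $W^A\to W^B$ whose $p$-th row is $f(X_p)$.

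Finally, I would conclude equivariantly. Since $H$-fixed points are computed degreewise, they commute with the diagonal, so $(\mathrm{diag}\,W)^H=\mathrm{diag}(W^H)$ for any bisimplicial $G$-set $W$ and any $H\leqslant G$. By the first step each row map $f(X_p)$ is a $G$-equivalence, hence each $f(X_p)^H$ is a weak equivalence; therefore the induced map $(W^A)^H\to(W^B)^H$ is a levelwise weak equivalence of bisimplicial sets. By the realization lemma for bisimplicial sets (cf.~\cite{BF}), its diagonal $(\mathrm{diag}\,W^A)^H\to(\mathrm{diag}\,W^B)^H$ is a weak equivalence, that is, $f(X)^H$ is a weak equivalence for every $H\leqslant G$, so $f(X)$ is a $G$-equivalence. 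The main obstacle I expect is purely bookkeeping: verifying that the degreewise identification $A(X)_q\cong A(X_q)_q$ is natural and compatible with both simplicial directions, so that it genuinely assembles into the claimed diagonal. The filtered-colimit extension and the equivariant realization lemma are then routine.
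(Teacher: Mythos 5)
Your proposal is correct and follows essentially the same route as the paper: both identify $A(X)$ with the diagonal of the bisimplicial $G$-set $(p,q)\mapsto A(X_p)_q$, use that fixed points commute with the diagonal together with the realization lemma of~\cite{BF}, and handle possibly infinite sets of simplices $X_p$ by writing a based $G$-set as the filtered colimit of its finite based $G$-subsets. The only cosmetic differences are the order of the steps (you upgrade the hypothesis to arbitrary based $G$-sets first, the paper reduces to rows first) and the justification of the colimit step (preservation of weak equivalences under filtered colimits versus the paper's use of injectivity and homotopy groups commuting with filtered colimits along cofibrations), which are interchangeable.
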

\begin{proof}
 First of all, we observe that $A(X)$ is just the diagonal of the
 bisimplicial set $B_{n, m} = A(X_n)_m$. Since taking fixed points commutes with taking
 the diagonal, it suffices to show that $A(X_n)_*^H \rightarrow B(X_n)_*^H$ is an
 ordinary weak equivalence for all subgroups $H\leqslant G$ (cf.~\cite[Theorem B.2]{BF}). This holds true by assumption if all $X_n$
 are finite.
 Moreover, if $S^+\rightarrow T^+$ is an injective map of based finite $G$-sets, then $A(S^+)^H\rightarrow A(T^+)^H$
 is injective, too, hence is a cofibration upon geometric realization.
 The assertion now follows, because any based $G$-set is the filtered colimit of its based finite $G$-subsets and
 homotopy groups commute with filtered colimits along cofibrations.
\end{proof}

\end{subsection}
\end{section}

\newpage
\begin{section}{Unstable comparison of \texorpdfstring{$\Gamma$-$G$-spaces}{Gamma-G-spaces} and \texorpdfstring{$G$-symmetric spectra}{G-symmetric spectra}}
\label{sec: Gamma G spaces and G symmetric spectra}
A $\Gamma$-$G$-space $A$ gives rise to a $G$-symmetric spectrum $A(\Sp)$.
We show in Section \ref{subsec: Quillen pairs between Gamma G spaces and G symmetric spectra} how this construction yields
Quillen pairs between the strict and projective model structures on $\Gamma$-$G$-spaces and the flat level model structure on $G$-symmetric spectra.
In Section \ref{subsec: Some properties of spectra of the form Sp otimes A}, we show that the spectra obtained from
$\Gamma$-$G$-spaces are connective
and that very special $\Gamma$-$G$-spaces yield $G\Omega$-spectra
upon a level fibrant replacement.
Finally, in the last subsection we compare suitable subcategories of the homotopy
categories of $\Gamma$-$G$-spaces and $G$-symmetric spectra with respect to the level model structures.

\begin{subsection}{Quillen pairs between $\Gamma$-$G$-spaces and $G$-symmetric spectra}
\label{subsec: Quillen pairs between Gamma G spaces and G symmetric spectra}

Let $A$ be a $\Gamma$-$G$-space.
The $G$-symmetric spectrum $A(\Sp)$ is given by $A(S^{{\bf{n}}})$ in level $n$.
Here $G$ acts on $A$ and $\Sigma_n$ acts by permuting the sphere coordinates.
The structure map $\sigma_n\colon A(S^{\bf{n}})\wedge S^1\rightarrow A(S^{{\bf{n+1}}})$
is defined by sending a class $[(v_1, \ldots, v_n), a]\wedge w$ to the class $[(v_1\wedge w, \ldots, v_n\wedge w), a]$.

Conversely, given a $G$-symmetric spectrum, we may construct a $\Gamma$-$G$-space
denoted $\Phi(\Sp, X)$ by setting
$\Phi(\Sp, X)(n^+) := \Map_{Sp^\Sigma}(\Sp^{\times n}, X)$.
With these definitions, we have
\begin{Prop}
\label{prop: adjunction gamma spaces spectra}
The functors
\begin{gather*}
\xymatrix{
 (-)(\Sp) : \Gamma(G\sset) \ar@<.3 ex>[r] &  {GSp^\Sigma}: \Phi(\Sp, - ) \ar@<.3 ex>[l]
}
\end{gather*}
form a Quillen pair between the strict model structure on $\Gamma$-$G$-spaces
and the flat level model structure on $G$-symmetric spectra.
\end{Prop}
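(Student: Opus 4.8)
The plan is to produce the adjunction formally and then to verify that the left adjoint $(-)(\Sp)$ preserves cofibrations and acyclic cofibrations, treating its effect on weak equivalences and on cofibrations separately.

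To set up the adjunction, note that $(-)(\Sp)$ is computed level-wise by the coend $A(\Sp)_n = A(S^{\mathbf n})$, so it preserves all colimits as well as smash products with based $G$-spaces. I would identify its right adjoint using the density presentation $A\cong\int^{n^+\in\Gamma}\Gamma(n^+,-)_+\wedge A(n^+)$ together with the co-Yoneda computation $\Gamma(n^+,-)(X)\cong\Map_{\sset}(n^+,X)$, which gives $\Gamma(n^+,-)(\Sp)\cong\Sp^{\times n}$. Commuting $(-)(\Sp)$ past the coend yields $A(\Sp)\cong\int^{n^+}A(n^+)\wedge\Sp^{\times n}$, and the enriched adjunctions $\Map(A(n^+)\wedge\Sp^{\times n},X)\cong\Map_{\sset}(A(n^+),\Map_{Sp^\Sigma}(\Sp^{\times n},X))$ assemble into a natural $G$-isomorphism $\Map_{Sp^\Sigma}(A(\Sp),X)\cong\Map_{\Gamma(\sset)}(A,\Phi(\Sp,X))$. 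This is the stated adjunction and it identifies $\Phi(\Sp,-)$ as the right adjoint.

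For the acyclic part it suffices to show that $(-)(\Sp)$ carries level equivalences to $G$-level equivalences, since a strict equivalence is exactly a level equivalence and $(-)(\Sp)$ will be shown to preserve cofibrations. So let $f\colon A\to B$ be a level equivalence; I must show that each $f(S^{\mathbf n})$ is a $\mathcal G_n$-equivalence of $G\times\Sigma_n$-spaces. Every $K\in\mathcal G_n$ is a graph subgroup $\Gamma_\rho=\{(h,\rho(h)) : h\in H\}$ for some $H\leqslant G$ and $\rho\colon H\to\Sigma_n$ (Example \ref{ex: Families Gn}). Restricting the $G\times\Sigma_n$-action along $h\mapsto(h,\rho(h))$ and using naturality of the coend in the sphere coordinate identifies $A(S^{\mathbf n})^{\Gamma_\rho}$ with $\bigl(A(S^{\mathbf n}_\rho)\bigr)^H$, where $A$ is regarded as a $\Gamma$-$H$-space and $S^{\mathbf n}_\rho$ denotes the based finite $H$-space $S^{\mathbf n}$ with $H$ acting through $\rho$. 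Since the restriction of a level equivalence is a level equivalence of $\Gamma$-$H$-spaces, Lemma \ref{lem: diag} applied over $H$ shows that $f(S^{\mathbf n}_\rho)$ is an $H$-equivalence, whence $f(S^{\mathbf n})^{\Gamma_\rho}$ is a weak equivalence. Thus $f(\Sp)$ is a $G$-level equivalence.

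The main obstacle is preservation of cofibrations: for a strict cofibration $f$ I must show that each latching map $\nu_m(f(\Sp))$ is a $(G\times\Sigma_m)$-cofibration. Here I would invoke the characterization of flat cofibrations from Section \ref{subsec: A characterization of flat cofibrations} to express $\nu_m(f(\Sp))$ through the $\Gamma$-space data. The key structural point is that, under $(-)(\Sp)$, the $k$-th latching datum of the $\Gamma$-$G$-space, governed by the maps $i_k(f)$ of the strict model structure, is smashed with the sphere $S^{\mathbf k}$, on which $\Sigma_k$ acts by permuting coordinates; filtering $S^{\mathbf m}$ by the number of coordinates hit exhibits $\nu_m(f(\Sp))$ as a corner (pushout-product) map built from the $i_k(f)$ for $k\leq m$ and the resulting sphere inclusions. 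Because $f$ is a strict cofibration, each $i_k(f)$ is a $\mathcal G_k$-cofibration, so its cells carry graph-subgroup isotropy and are in particular free over $\Sigma_k$; together with the freeness of the successive quotients of the sphere filtration this forces the induction products to be genuine $(G\times\Sigma_m)$-cofibrations, which is precisely the $\mathcal{ALL}$-cofibrancy demanded by flatness. Carrying out this latching bookkeeping carefully is the heart of the argument, and it is the step where the flat (rather than a projective) level structure on $GSp^\Sigma$ is essential. Once it is done, preservation of acyclic cofibrations follows by combining it with the previous paragraph, and $(-)(\Sp)$ is left Quillen.
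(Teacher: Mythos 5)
Your adjunction setup and your treatment of weak equivalences are sound: identifying $A(S^{\mathbf n})^{\Gamma_\rho}$ with $\bigl((\mathrm{res}_H A)(S^{\mathbf n}_\rho)\bigr)^H$ for a graph subgroup $\Gamma_\rho$ and applying Lemma \ref{lem: diag} over $H$ does show that $(-)(\Sp)$ sends level equivalences to $G$-level equivalences, and combined with preservation of cofibrations this would give a legitimate left-Quillen verification. The problem is that preservation of cofibrations --- which you yourself call the heart of the argument --- is never actually proved; it is only announced. The assertion that ``filtering $S^{\mathbf m}$ by the number of coordinates hit exhibits $\nu_m(f(\Sp))$ as a corner map built from the $i_k(f)$ and sphere inclusions'' is exactly the nontrivial claim, and nothing in your text establishes it: you give no identification of the latching object $L_m(A(\Sp))$ in terms of the skeleta $\sk_k A$, no verification that the relevant squares are pushouts, and no argument that the graph-subgroup isotropy of the cells of $i_k(f)$ interacts with the permutation action on the sphere coordinates so as to produce genuine $G\times\Sigma_m$-cofibrations. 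Carrying this out requires, at a minimum, the skeleton filtration of a $\Gamma$-$G$-space (Propositions \ref{prop: filtration gamma space} and \ref{prop: discrete gamma set}, which the paper only develops later, in Section \ref{sec: The smash product of Gamma G spaces}; note that $(\sk_{n-1}\Gamma_H(S^+,-))(\Sp)$ is precisely $\Sp^{\times S}_{\leq n-1}$) together with the flatness statement of Proposition \ref{prop: product G flat}, followed by a cell-by-cell induction. As written, your proof has a hole at its central step.

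For comparison, the paper sidesteps the latching analysis for arbitrary strict cofibrations by verifying the dual condition: $\Phi(\Sp,-)$ takes (acyclic) $G$-level fibrations to (acyclic) strict fibrations. The computation $(\csk_{n-1}\Phi(\Sp,X))(n^+)\cong\Map_{Sp^\Sigma}(\Sp^{\times n}_{\leq n-1},X)$ converts the matching-object condition for strict fibrations into a mapping-space condition, and the $G$-simplicial structure on $G$-symmetric spectra then reduces everything to the single fact that $\Sp^{\times S}_{\leq n-1}\rightarrow\Sp^{\times S}$ is an $H$-flat cofibration (Proposition \ref{prop: product G flat}). In other words, the corner-map bookkeeping is done once, for the sphere spectrum only, rather than for every strict cofibration. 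If you want to keep your left-adjoint route, you must either carry out the cell induction sketched above in full, or switch to this dual verification.
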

\begin{proof}
It well-known that this is an adjunction (cf.~\cite[Proposition 3.3]{Segal 1},~\cite[Lemma 4.6]{BF}).
We prove that $\Phi(\Sp,-)$ sends $G$-level fibrations (resp. acyclic $G$-level fibrations) to
strict fibrations (resp. acyclic strict fibrations).
Note that, for any $G$-symmetric spectrum $X$, we have
\begin{equation*}
(\csk_{n-1} \Phi(\Sp, X))(n^+) = \nlim_{n^+\rightarrow j^+,\ j\leq n-1} \Map_{Sp^\Sigma}(\Sp^{\times j}, X)
\cong \Map_{{Sp^\Sigma}}(\Sp^{\times n}_{\leq n-1}, X),
\end{equation*}
where
\begin{equation*}
(\Sp^{\times n}_{\leq n-1})_m = \{(x_1, \ldots, x_n)\in (S^m)^{\times n}|\ x_i = x_j\ \text{for some}\ i\neq j\ \text{or}\ x_i = *\ \text{for some}\ i\},
\end{equation*}
as a direct computation of colimits proves.
So we have to show that the map
\begin{equation*}
\entrymodifiers={+!! <0pt, \fontdimen22\textfont2>}
\xymatrix{
 \Map_{Sp^\Sigma}(\Sp^{\times n}, X)\ar[r] & \Map_{Sp^\Sigma}(\Sp^{\times n}_{\leq n-1}, X)\times_{\Map_{Sp^\Sigma}(\Sp^{\times n}_{\leq n-1}, Y)} \Map_{Sp^\Sigma}(\Sp^{\times n}, Y),
}
\end{equation*}
induced by the inclusion $\Sp^{\times n}_{\leq n-1}\rightarrow \Sp^{\times n}$
and a $G$-level fibration (resp. acyclic $G$-level fibration) $f\colon X\rightarrow Y$
is a $\mathcal{G}_n$-fibration (resp. acyclic $\mathcal{G}_n$-fibration).

Equivalently, for all $n$ and all based finite $H$-sets $S^+$ of order $n+1$, the map
\begin{gather*}
\entrymodifiers={+!! <0pt, \fontdimen22\textfont2>}
\xymatrix{
 \Map_{Sp^\Sigma}(\Sp^{\times S}, X)\ar[r] &
 \Map_{Sp^\Sigma}(\Sp^{\times S}_{\leq n-1}, X)\times_{\Map_{Sp^\Sigma}(\Sp^{\times S}_{\leq n-1}, Y)} \Map_{Sp^\Sigma}(\Sp^{\times S}, Y),
}
\end{gather*}
is an $H$-fibration.
Since $H$-symmetric spectra are $H$-simplicial, it is therefore sufficient to show
that $\Sp^{\times S}_{\leq n-1}\rightarrow \Sp^{\times S}$ is an $H$-flat cofibration
of $H$-symmetric spectra.
This is the content of Proposition \ref{prop: product G flat} in the appendix.
\end{proof}

Together with Proposition \ref{prop: quillen equivalence projective and stict model structure}
this implies
\begin{Prop}
The functors
\[
\xymatrix{
 (-)(\Sp) : \Gamma(G\sset) \ar@<.3 ex>[r] &  GSp^\Sigma: \Phi(\Sp,-), \ar@<.3 ex>[l]
}
\]
form a Quillen pair between the projective model structure on $\Gamma$-$G$-spaces
and the flat level model structure on $G$-symmetric spectra.
\end{Prop}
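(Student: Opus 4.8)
The plan is to deduce this Quillen pair formally, as a composite, without reproving anything about the adjunction itself. The pair of functors $((-)(\Sp), \Phi(\Sp,-))$ is literally the same as in Proposition~\ref{prop: adjunction gamma spaces spectra}; only the model structure on the source $\Gamma(G\sset)$ has changed, from strict to projective. So the entire issue is to compare the two classes of cofibrations on $\Gamma$-$G$-spaces and to feed that comparison into the already-established Quillen pair.

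First I would record what Proposition~\ref{prop: quillen equivalence projective and stict model structure} gives: the identity is a left Quillen functor $\Gamma(G\sset)^{\text{projective}} \to \Gamma(G\sset)^{\text{strict}}$. Concretely, every projective cofibration is a strict cofibration and every acyclic projective cofibration is an acyclic strict cofibration; here I use that the level equivalences agree with the strict equivalences, as observed in Section~\ref{subsubsec: The strict model structure}, so that ``acyclic'' means the same thing in both structures. Next I would use that a composite of left Quillen functors is again left Quillen. Writing the left adjoint as the composite
\[
\Gamma(G\sset)^{\text{projective}} \xrightarrow{\ \id\ } \Gamma(G\sset)^{\text{strict}} \xrightarrow{\ (-)(\Sp)\ } GSp^\Sigma,
\]
the first arrow is left Quillen by the previous step and the second is left Quillen by Proposition~\ref{prop: adjunction gamma spaces spectra}. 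Unwinding this: a projective cofibration is in particular a strict cofibration, hence is sent by $(-)(\Sp)$ to a $G$-flat cofibration, and the same argument with ``acyclic'' inserted throughout handles acyclic cofibrations. Thus $(-)(\Sp)$ preserves cofibrations and acyclic cofibrations, which is exactly the statement that $((-)(\Sp), \Phi(\Sp,-))$ is a Quillen pair for the projective and flat level structures.

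Since the argument is purely formal, I do not expect a genuine obstacle. The only point requiring care is the direction of the comparison of Proposition~\ref{prop: quillen equivalence projective and stict model structure}: the identity must be the \emph{left} Quillen functor from projective to strict, so that the more restrictive projective cofibrations embed into the strict cofibrations (and not the other way around). Once that direction is pinned down, the composite immediately lands the projective cofibrations in the $G$-flat cofibrations, and the proposition follows.
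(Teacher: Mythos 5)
Your proposal is correct and is essentially identical to the paper's own argument: the paper states this proposition with the phrase ``Together with Proposition \ref{prop: quillen equivalence projective and stict model structure} this implies,'' i.e.\ it also obtains the result by composing the left Quillen functor $\id\colon \Gamma(G\sset)^{\text{projective}}\to\Gamma(G\sset)^{\text{strict}}$ with the left Quillen functor $(-)(\Sp)$ of Proposition \ref{prop: adjunction gamma spaces spectra}. You have also correctly pinned down the one delicate point, namely the direction of the identity adjunction, which matches the paper's convention that projective cofibrations are strict cofibrations.
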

\end{subsection}

\begin{subsection}{Some properties of $G$-spectra of the form $A(\Sp)$}
\label{subsec: Some properties of spectra of the form Sp otimes A}

\begin{subsubsection}{Connectivity}
\label{subsubsec: Connectivity}
We want to show that all the negative homotopy groups
of a spectrum arising from a $\Gamma$-$G$-space vanish.
This is accomplished by introducing a two sided bar construction.

Given a $\Gamma$-$G$-space $A$, we define a functor $\sigma A$ from $\Gamma$ to $G$-spaces by setting
\[
(\sigma A)(n^+) = B(n^+, \Gamma_G, A) = \mathrm{diag}(B_\bullet({n^+}, \Gamma_G, A)),
\]
where for a $G$-space $X$, $B_\bullet(X, \Gamma_G, A)$ denotes the simplicial
$G$-space with $k$-simplices
\[
\resizebox{12,5cm}{!}{
\xymatrix{
 B_k(X, \Gamma_G, A)  = \bigsqcup_{s_0^+, \ldots, s_k^+\in \Gamma_G} A(s_0^+)\times \Gamma_G(s_0^+, s_1^+) \times \ldots \times \Gamma_G(s_{k-1}^+, s_k^+)\times (X)^{s_k}.
}
}
\]
There is a natural $G$-isomorphism
$B(X, \Gamma_G, A)\rightarrow (\sigma A)(X)$
(cf.~\cite[Proof of Theorem 1.5]{Woolfson}).
Then $(\sigma A)/(\sigma A(0^+))$ is a $\Gamma$-$G$-space and the $n$th level of the
$G$-symmetric spectrum $(\sigma A)(\Sp)/(\sigma A(0^+))$ is $G$-isomorphic to
$B(S^{\bf{n}}, \Gamma_G, A)/B(*, \Gamma_G, A)$.

\begin{Lemma}
\label{lem: cofibrant approximation}
 The map
 \[
 \xymatrix{
 B_k(X, \Gamma_G, A)\ar[r]& A(X),\ (a, f_0, \ldots, f_{k-1}, \phi)\mapsto (\phi_*{f_{k-1}}_*\ldots {f_0}_*)(a),
}
 \]
 induces a natural level equivalence $(\sigma A)/(\sigma A(0^+))\rightarrow A$ of $\Gamma$-$G$-spaces.
 In particular, it induces a $G$-level equivalence of $G$-symmetric spectra
 \[
 \xymatrix{
 (\sigma A)(\Sp)/(\sigma A)(0^+)\ar[r] & A(\Sp).
}
 \]
\end{Lemma}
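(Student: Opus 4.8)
The plan is to recognize the map in the statement as the canonical augmentation from the two-sided bar construction to a coend, and to prove that it is an equivalence by exhibiting an extra degeneracy. For a based $G$-space $X$ the coend defining $A(X)$ in Section~\ref{subsec: Generalities on Gamma G spaces} is exactly the coend $\Map_{\sset}(-,X)\otimes_{\Gamma_G}A$, i.e.\ the coequalizer of the two face maps $B_1\rightrightarrows B_0$, and the displayed map is the augmentation $\epsilon\colon B_\bullet(X,\Gamma_G,A)\rightarrow A(X)$. Since the diagonal of a bisimplicial set commutes with $H$-fixed points for every $H\leqslant G$, it suffices to prove that for each based finite $G$-set $S^+$ the realization $|B_\bullet(S^+,\Gamma_G,A)|\rightarrow A(S^+)$ is a $G$-equivalence, and then to handle the quotient by $(\sigma A)(0^+)$ separately.

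The key observation is that, evaluated at a based finite $G$-set $S^+$, the right module becomes \emph{representable}: for finite based sets every based map is a morphism of $\Gamma$, so $\Map_{\sset}(-,S^+)=\Gamma_G(-,S^+)$ as contravariant functors with conjugation $G$-action, represented by the object $S^+$ of $\Gamma_G$. I would therefore define the extra degeneracy $s\colon B_k(S^+,\Gamma_G,A)\rightarrow B_{k+1}(S^+,\Gamma_G,A)$ by inserting $S^+$ as a new final object, using the rightmost element $g\in\Map_{\sset}(s_k^+,S^+)=\Gamma_G(s_k^+,S^+)$ as the newly created morphism and placing $\id_{S^+}$ in the new rightmost slot,
\[
(a,f_0,\ldots,f_{k-1},g)\longmapsto (a,f_0,\ldots,f_{k-1},g,\id_{S^+}).
\]
A direct check of the simplicial identities shows that $s$ together with $\epsilon$ is a contracting homotopy of the augmented simplicial $G$-space $B_\bullet(S^+,\Gamma_G,A)\rightarrow A(S^+)$. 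This contraction is $G$-equivariant because $G$ acts diagonally and $\id_{S^+}$ is fixed by the conjugation action, so it restricts to a contraction on $H$-fixed points for every $H\leqslant G$. Passing to the diagonal, $|B_\bullet(S^+,\Gamma_G,A)|\rightarrow A(S^+)$ is a $G$-homotopy equivalence, in particular a $G$-equivalence.

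To pass to the quotient I would apply the previous paragraph to $S^+=0^+$. Here $\Map_{\sset}(-,0^+)$ is the one-point functor, so the contraction shows $(\sigma A)(0^+)=B(0^+,\Gamma_G,A)$ is $G$-contractible, in agreement with $A(0^+)=*$. Since $(\sigma A)(0^+)\hookrightarrow(\sigma A)(S^+)$ is an inclusion of simplicial $G$-sets, hence a cofibration, with $G$-contractible source, the quotient map $(\sigma A)(S^+)\rightarrow(\sigma A)(S^+)/(\sigma A)(0^+)$ is a $G$-equivalence for every $S^+$; on $H$-fixed points it is the quotient by a contractible subcomplex. By naturality the augmentation carries $(\sigma A)(0^+)$ into $A(0^+)=*$ and so descends to the quotient, and two-out-of-three shows that the induced map $(\sigma A)/(\sigma A(0^+))\rightarrow A$ is a level equivalence of $\Gamma$-$G$-spaces; naturality in $A$ is immediate from the formulas.

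Finally, for the statement about $G$-symmetric spectra I would invoke Lemma~\ref{lem: diag}. The $n$th level of $(\sigma A)(\Sp)/(\sigma A)(0^+)\rightarrow A(\Sp)$ is the evaluation of the level equivalence just constructed at the based $G$-space $S^{{\bf{n}}}$. A map of $G$-symmetric spectra is a $G$-level equivalence once each level is a $\mathcal{G}_n$-equivalence, and for $J=\{(h,\rho(h))\mid h\in H\}\in\mathcal{G}_n$ the $J$-fixed points of $A(S^{{\bf{n}}})$ agree with the $H$-fixed points of $A$ evaluated at $S^{{\bf{n}}}$ equipped with the $H$-action through $\rho\colon H\rightarrow\Sigma_n$. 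Restricting the level equivalence to a $\Gamma$-$H$-space and applying Lemma~\ref{lem: diag} over $H$ to this based $H$-space yields the required $H$-equivalences, so the map of spectra is a $G$-level equivalence. The main points requiring care are the verification that the extra degeneracy is genuinely $G$-equivariant and this last translation of a level equivalence of $\Gamma$-$G$-spaces into a $\mathcal{G}_n$-equivalence of spectrum levels; the remainder is the standard bar-resolution argument.
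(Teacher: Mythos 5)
Your proposal is correct and takes essentially the same route as the paper: the paper obtains the key $G$-equivalence $(\sigma A)(S^+)\rightarrow A(S^+)$ by referring to Lydakis's non-equivariant argument, whose content is exactly the extra-degeneracy contraction you spell out (valid equivariantly because $\id_{S^+}$ is fixed under the conjugation action), then passes to the quotient using that $(\sigma A)(0^+)\rightarrow (\sigma A)(S^+)$ is a $G$-cofibration with $G$-contractible source, and deduces the spectrum-level statement from Lemma \ref{lem: diag}. The only difference is one of explicitness: you supply the details the paper delegates to the citation, including the translation of the level equivalence into $\mathcal{G}_n$-equivalences via graph subgroups, which the paper leaves implicit.
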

\begin{proof}
As in~\cite[Proposition 5.19]{Lydakis}, one shows that the map induces, for any based finite $G$-set $S^+$, a
$G$-equivalence $(\sigma A)(S^+)\rightarrow A(S^+)$. The first statement follows now, since $(\sigma A)(0^+)\rightarrow (\sigma A)(S^+)$ is a $G$-cofibration.
The second part follows from Lemma \ref{lem: diag}.
\end{proof}

\begin{Lemma}
\label{lem: connectivity}
Let $A$ be a $\Gamma$-$G$-space. If $X$ is any pointed $G$-space such that, for all $K\leqslant L$, $\conn(X^K)\geq \conn(X^L)\geq 1$
then, for all $H\leqslant G$, 
we have
\[
\conn(B(X, \Gamma_G, A)^H/B(x_0, \Gamma_G, A)^H)\geq \conn(X^H).
\]
\end{Lemma}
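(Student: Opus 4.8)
The plan is to reduce the estimate to a single bar degree and then to use that geometric realization does not lower connectivity. Writing $\bar B_k := B_k(X,\Gamma_G,A)/B_k(x_0,\Gamma_G,A)$, I would first note that forming $H$-fixed points commutes with the quotient and with the diagonal, so that
\[
B(X,\Gamma_G,A)^H/B(x_0,\Gamma_G,A)^H \cong \mathrm{diag}_k\bigl((\bar B_k)^H\bigr).
\]
It therefore suffices to show that each $(\bar B_k)^H$ is $\conn(X^H)$-connected: the conclusion then follows from the standard fact that the diagonal of a bisimplicial set whose rows are all $c$-connected is itself $c$-connected (via the skeletal filtration of the realization, whose subquotients are $p$-fold suspensions of $c$-connected spaces).

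For the per-degree estimate I would make the $G$-action explicit. Because conjugation fixes the source and target of a morphism, $G$ does not permute the indexing tuples $(s_0^+,\dots,s_k^+)$ but acts inside each summand. Hence $\bar B_k$ is a wedge, indexed by a set on which $G$ acts trivially, of the based $G$-spaces $(P_{\vec s})_+\wedge X^{s_k}$, where $P_{\vec s} = A(s_0^+)\times\Gamma_G(s_0^+,s_1^+)\times\cdots\times\Gamma_G(s_{k-1}^+,s_k^+)$ and $X^{s_k}$ is based at the constant map to $x_0$. Since $H$-fixed points commute with wedges and with half-smashes, this yields
\[
(\bar B_k)^H = \bigvee_{\vec s}\,(P_{\vec s}^H)_+\wedge (X^{s_k})^H.
\]

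The heart of the argument is the connectivity of $(X^{s_k})^H$. As $s_k^+$ is a based finite $G$-set, restriction gives $(X^{s_k})^H \cong \prod_{[s]\in s_k/H} X^{H_s}$, a product over $H$-orbits with $H_s\leqslant H$ the isotropy of a representative (an empty product, occurring for $s_k^+ = 0^+$, is a point and contributes a trivial summand). The hypothesis $\conn(X^K)\geq\conn(X^L)$ for $K\leqslant L$ gives $\conn(X^{H_s})\geq\conn(X^H)$ for every orbit, so the product is $\conn(X^H)$-connected. Since $\conn(X^H)\geq 1$ this space is simply connected, and as a $c$-connected space admits a CW model with cells only in dimension $0$ and $>\conn(X^H)$, the half-smash $(P_{\vec s}^H)_+\wedge(X^{s_k})^H$ has no cells in positive dimensions $\leq\conn(X^H)$ and is again $\conn(X^H)$-connected. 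A wedge of such spaces is $\conn(X^H)$-connected, which is the per-degree bound.

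I expect the main obstacle to be the equivariant bookkeeping: verifying that conjugation leaves the individual wedge summands invariant, so that fixed points may be computed summand by summand, and then the orbit decomposition of $(X^{s_k})^H$. It is precisely at this last point that the monotonicity hypothesis $\conn(X^K)\geq\conn(X^L)\geq 1$ is indispensable, ensuring that passing to the isotropy subgroups $H_s\leqslant H$ never decreases connectivity and that all fixed points are simply connected. The merit of the bar construction is that it replaces the $\Sigma_p$-quotients $X^{\wedge p}$ appearing in the skeletal filtration of the coend by honest products $X^{s_k}$, whose $H$-fixed points split cleanly as $\prod_{[s]}X^{H_s}$.
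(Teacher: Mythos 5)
Your proposal is correct and is essentially the paper's proof: the same summand-by-summand fixed-point computation of the bar degrees, the same orbit decomposition $(X^{s_k})^H \cong X^{L_1}\times\cdots\times X^{L_j}$ exploiting the monotonicity and simple-connectivity hypotheses, and the same reduction of the final step to a standard ``levelwise connected implies realization connected'' fact. The only cosmetic difference is that the paper passes to geometric realization and cites May's connectivity theorem for proper simplicial spaces (checking properness via the degeneracies being wedge-summand inclusions), whereas you stay with bisimplicial sets and invoke the analogous statement for the diagonal.
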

\begin{proof}
We check the connectivity of
\[
|B_\bullet(|X|, \Gamma_G, |A|)^H/B_\bullet(x_0, \Gamma_G, |A|)^H|.
\]
This space is the geometric realization of the simplicial space with $k$-simplices
\begin{gather*}
 \bigvee_{s_0^+, \ldots, s_k^+\in\Gamma_G} (|A(s_0^+)|^+)^H\wedge (\Gamma_G(s_0^+, s_1^+)^+)^H\wedge \ldots\wedge (\Gamma_G(s_{k-1}^+, s_k^+)^+)^H\wedge (|X|^{s_k})^H.
\end{gather*}
Now, each wedge summand is at least as connected as $(|X|^{s_k})^H$.
Writing $s_k^+\cong_H H/L_1^+\vee \ldots \vee H/L_j^+$, we find $(X^{s_k})^H\cong X^{L_1}\times \ldots \times X^{L_n}$,
which is, by our assumptions, firstly, at least as connected as $X^H$ and, secondly, simply connected.
Hence, all simplicial levels are so.
The space in question is a wedge of based $G$-$CW$-complexes of the form $|A(S^+)|^+\wedge |X|^{T}$ and the
degeneracy maps are just inclusions of certain wedge summands. One can now argue as in~\cite[Theorem 11.12]{Geometry of Iterated Loop Spaces}.
\end{proof}
\begin{Cor}
\label{cor: connectivity of Sp otimes A}
 The spectrum $A(\Sp)$ is connective for all $\Gamma$-$G$-spaces $A$, that is
 all negative homotopy groups vanish.
\end{Cor}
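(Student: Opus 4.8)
The plan is to reduce to the definition of connectivity and then exploit the bar-construction model for $A(\Sp)$ together with Lemma~\ref{lem: connectivity}. By definition, $A(\Sp)$ is connective once I show $\pi_{-k}^{H,\mathcal{U}}(A(\Sp)) = 0$ for every $H\leqslant G$ and every $k\geq 1$. Unwinding the definition of the negative homotopy groups, $\pi_{-k}^{H,\mathcal{U}}(A(\Sp)) = \pi_0^{H,\mathcal{U}}(sh^{\bf{k}} A(\Sp))$, and since the value of $sh^{\bf{k}} A(\Sp)$ at a finite $G$-set $M$ is $A(S^{{\bf{k}}\sqcup M})$, this group is the colimit over $M\in s(\mathcal{U})$ of the based $H$-homotopy classes $[\,|S^M|, |A(S^{{\bf{k}}\sqcup M})|\,]_*^H$. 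So it suffices to prove that each of these sets is trivial for $M$ ranging over a cofinal subsystem.

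First I would replace $A(\Sp)$ by the bar-construction model. By Lemma~\ref{lem: cofibrant approximation} the natural map $(\sigma A)(\Sp)/(\sigma A)(0^+)\to A(\Sp)$ is a $G$-level equivalence, hence an $H$-equivalence on $H$-fixed points in each value and therefore induces bijections on the sets $[\,|S^M|, -\,]_*^H$ that are compatible with the structure maps; consequently it induces an isomorphism on $\pi_0^{H,\mathcal{U}}$ of the shifts. Thus I may compute with the model whose value at $M$ is $Y_M := B(S^{{\bf{k}}\sqcup M}, \Gamma_G, A)/B(*, \Gamma_G, A)$.

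Next I would feed the representation sphere $X = S^{{\bf{k}}\sqcup M}$ into Lemma~\ref{lem: connectivity}. For a finite $H$-set $V$ one has $(S^V)^K = S^{V^K}$, so $\conn((S^V)^K) = |V^K| - 1$ is monotone in $K$; restricting the colimit to the cofinal system of those $M$ possessing a point fixed by all of $H$ (possible since $\mathcal{U}$ is a complete universe) guarantees $|({\bf{k}}\sqcup M)^K|\geq k+1\geq 2$ for all $K\leqslant H$, so the hypotheses of the lemma hold. The lemma then yields, for every $K\leqslant H$,
\[
\conn\bigl( Y_M^K \bigr) \;\geq\; |({\bf{k}}\sqcup M)^K| - 1 \;=\; k + |M^K| - 1 \;\geq\; |M^K|,
\]
where the last inequality uses $k\geq 1$.

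The final step is an equivariant obstruction-theory argument. The source $|S^M|$ is a based $H$-CW complex with $(|S^M|)^K = |S^{M^K}|$ of dimension $|M^K|$, while $Y_M^K$ is $|M^K|$-connected by the displayed bound. Building a nullhomotopy cell by cell, the obstruction to extending over an $H/K$-cell of dimension $d\leq \dim(|S^M|^K) = |M^K|$ lies in $\pi_d(Y_M^K) = 0$; hence every based $H$-map $|S^M|\to Y_M$ is $H$-nullhomotopic and $[\,|S^M|, |Y_M|\,]_*^H = *$. A colimit of trivial based sets is trivial, so $\pi_{-k}^{H,\mathcal{U}}(A(\Sp)) = 0$, proving connectivity. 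I expect the main obstacle to be the bookkeeping in this last step: verifying that the bound $k\geq 1$ is \emph{exactly} enough for the connectivity to match the cell dimensions, that a $G$-level equivalence really preserves the negative homotopy groups, and that the cofinality reduction legitimately supplies the ``$\geq 1$'' hypothesis of Lemma~\ref{lem: connectivity}.
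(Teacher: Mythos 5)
Your proof is correct and follows essentially the same route as the paper: reduce to the colimit description of $\pi_{-k}^{H,\mathcal{U}}$, restrict to the cofinal system of $M$ with $|M^H|\geq 1$, apply Lemma~\ref{lem: connectivity} (via the bar-construction replacement of Lemma~\ref{lem: cofibrant approximation}) to get $\conn(A(S^{{\bf{k}}\sqcup M})^K)\geq |M^K|+k-1\geq \dim|S^M|^K$, and conclude by the dimension-versus-connectivity vanishing criterion. The only cosmetic difference is that you spell out the cell-by-cell obstruction argument where the paper simply cites~\cite[Proposition 2.5]{Adams pre}.
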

\begin{proof}
Fix $n\geq 1$. The $(-n)$th homotopy group with respect to $H$ is computed as a colimit over finite $H$-sets $M$
of $[|S^M|, |A(S^{M\sqcup \bf{n}})|]^H_*$. So we may assume $|M^H|\geq 1$.
For such $M$, we have, for all $K\leqslant H$,
\[
 \text{dim}|S^M|^K = |M^K|\leq |M^K| + n - 1 \leq \conn((A(S^{M\sqcup\bf{n}}))^K)
\]
by Lemma \ref{lem: connectivity} and hence $[|S^M|, |A(S^{M\sqcup \bf{n}})|]^H_* = 0$
by~\cite[Proposition 2.5]{Adams pre}.
\end{proof}
\end{subsubsection}

\begin{subsubsection}{Very special $\Gamma$-$G$-spaces and $G\Omega$-spectra}
\label{subsubsection: Very special Gamma G spaces and G Omega spectra}

We briefly recall how one obtains $G\Omega$-spectra from very special $\Gamma$-$G$-spaces
(cf.~\cite{Shimakawa}).
The next lemma and proposition are simplicial analogs of~\cite[Lemma 7.5, Theorem 7.6]{Santhanam}.

\begin{Lemma}
\label{lem: X otimes A very special}
 Suppose a $\Gamma$-$G$-space $A$ is special. Then, for any based $G$-simplicial set $X$, the $\Gamma$-$G$-space $A(X)$, defined by
 $n^+\mapsto A(n^+\wedge X)$ is special, too. If $A$ is very special, then $A(X)$ is very special, too.
\end{Lemma}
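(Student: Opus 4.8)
\emph{Setup.} The plan is to reduce the claim, one simplicial level and one finite set at a time, to the defining specialness of $A$. Write $B$ for the $\Gamma$-$G$-space $A(X)$, so $B(n^+)=A(n^+\wedge X)$; since $1^+\wedge X=X$ we have $B(1^+)=A(X)$, and $B(0^+)=A(*)=*$. Unravelling the coend defining the value of $B$ at a based finite $G$-set $S^+$, the co-Yoneda lemma gives a natural $G$-isomorphism $B(S^+)\cong A(S^+\wedge X)$, under which the Segal map $P^B_{S^+}\colon B(S^+)\to B(1^+)^{S^+}$ becomes the map $A(S^+\wedge X)\to\prod_{s\in S}A(X)$ whose $s$-th component is $A(p_s\wedge X)$, i.e.\ the map induced by the projections of the wedge $S^+\wedge X=\bigvee_{s\in S}X$ onto its summands. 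We must show this is a $G$-equivalence.

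\emph{Specialness of $B$.} I would check $H$-fixed points for each $H\leqslant G$. Arguing as in the proof of Lemma \ref{lem: diag}, $A(S^+\wedge X)$ is the diagonal of the bisimplicial set $(p,q)\mapsto A(S^+\wedge X_p)_q$, and since fixed points and finite products commute with the diagonal, $P^B_{S^+}$ is the diagonal of the levelwise Segal maps; by the bisimplicial realization theorem used there it suffices to treat each simplicial level $p$, where $X_p$ is a based $G$-set. Writing $X_p$ as the filtered colimit of its based finite $G$-subsets and using, exactly as in Lemma \ref{lem: diag}, that $A(-)$ preserves such filtered colimits along cofibrations and that homotopy groups commute with them, I reduce to the case of a based finite $G$-set $W^+$ in place of $X_p$. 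For finite $W^+$ the key point is the commuting triangle
\[
\xymatrix{
A(S^+\wedge W^+)\ar[r]^-{P^B_{S^+}}\ar[dr]_-{P^A_{S^+\wedge W^+}} & \prod_{s\in S}A(W^+)\ar[d]^-{\prod_s P^A_{W^+}}\\
& \prod_{s\in S}\Map_{\sset}(W^+,A(1^+))
}
\]
which one checks coordinatewise from the identity $p_w\circ(p_s\wedge W^+)=p_{(s,w)}$. Since $A$ is special, $P^A_{S^+\wedge W^+}$ and $P^A_{W^+}$ are $G$-equivalences, and $\prod_s P^A_{W^+}=\Map_{\sset}(S^+,P^A_{W^+})$ is a $G$-equivalence because $\Map_{\sset}(S^+,-)$ is the finite product $\prod_{s\in S}(-)$ and finite products preserve $G$-equivalences; hence $P^B_{S^+}$ is a $G$-equivalence by two-out-of-three. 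This proves that $B$ is special.

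\emph{Very specialness of $B$.} It remains to show that $\pi_0^H(B(1^+))=\pi_0^H(A(X))$ is a group for every $H\leqslant G$, the addition being induced by the fold $\nabla\colon 2^+\to 1^+$ together with the Segal equivalence $B(2^+)\xrightarrow{\simeq_G}B(1^+)\times B(1^+)$ just established. First, for a based finite $H$-set $W^+$, specialness of $A$ identifies $\pi_0^H(A(W^+))$ with the product $\prod_{H/L\subseteq W}\pi_0^L(A(1^+))$ over the $H$-orbits $H/L$ of $W$, as commutative monoids, the fold-induced addition corresponding by naturality of the Segal maps to the coordinatewise sum; since $A$ is very special each factor $\pi_0^L(A(1^+))$ is a group, so $\pi_0^H(A(W^+))$ is a group. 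Second, the inclusion of a based finite $H$-set $W^+$ into $X$ induces, by naturality of the fold maps and of the Segal equivalences, a monoid homomorphism $\pi_0^H(A(W^+))\to\pi_0^H(A(X))$. Finally, every class in $\pi_0^H(A(X))$ is represented by an $H$-fixed $0$-simplex, which lies in $A(W^+)$ for some finite based $H$-set $W^+\subseteq X_0$ because $A(-)$ commutes with the filtered colimit $X_0=\colim_\alpha W_\alpha^+$; writing such a class as $\iota_*[x_W]$, its inverse is $\iota_*(-[x_W])$, the inverse $-[x_W]$ existing in the group $\pi_0^H(A(W^+))$ and $\iota_*$ being a monoid homomorphism. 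Hence $\pi_0^H(A(X))$ is a group and $B$ is very special.

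\emph{Main obstacle.} The reductions by fixed points, bisimplicial diagonals and filtered colimits follow the pattern of Lemma \ref{lem: diag} and are routine. The delicate point is the very special part: one must verify that the fold-induced monoid structure on $\pi_0^H(A(W^+))$ for finite $W^+$ genuinely agrees with the coordinatewise group structure transported from $\pi_0^L(A(1^+))$ via specialness, so that groupness is available on the finite approximations and can then be pushed forward to $A(X)$.
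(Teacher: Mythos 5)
Your proof is correct. The specialness half is essentially the paper's argument: both reduce along the bisimplicial diagonal to the discrete levels $X_p$ and then invoke specialness of $A$ at finite sets; where you pass to finite $G$-subsets and conclude by a two-out-of-three triangle, the paper simply notes that both sides of the Segal map are $G$-equivalent to the weak product $\Map_{\sset}'(S^+\wedge X_p, A(1^+))$, which is the same filtered-colimit argument in compressed form. For the very special half, however, you take a genuinely different route. The paper stays at the space level: grouplikeness of $A(X)$ is reformulated as the statement that the map $A(X\vee X)\rightarrow A(X)\times A(X)$, induced by retraction onto the first summand and the fold map, is a $G$-equivalence (the shear-map criterion), after which the identical diagonal/levelwise reduction applies once more and very specialness of $A$ finishes the proof. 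You instead argue directly on $\pi_0$: every class in $\pi_0^H(A(X))$ is represented by an $H$-fixed $0$-simplex lying in a finite stage $A(W^+)$, where very specialness of $A$ supplies an inverse via the orbit decomposition $\pi_0^H(A(W^+))\cong\prod_i\pi_0^{L_i}(A(1^+))$, and the inverse is pushed forward along the monoid homomorphism $\iota_*$. Your version is more elementary in that it never needs the shear-map criterion, which itself requires justification (for a special object the shear map is an equivalence precisely when $\pi_0$ is a group); the cost is exactly the bookkeeping you flag, namely that the Segal identification is an isomorphism of monoids and that $\iota_*$ is additive, both of which do follow from naturality of the Segal and fold maps as you assert. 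The paper's formulation buys uniformity: a single levelwise reduction disposes of both halves of the lemma.
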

\begin{proof}
We first show that $A(X)$ is special again provided $A$ is special.
We need to show that the map $A(S^+\wedge X)\rightarrow \Map_\sset(S^+, A(X))$
is a $G$-equivalence for any based finite $G$-set $S^+$.
This morphism is the diagonal of a morphism of bisimplical $G$-sets, hence it suffices to
check that, for all $n\geq 0$, $A(S^+\wedge X_n)\rightarrow \Map_\sset(S^+, A(X_n))$
is a $G$-equivalence.
But both sides are $G$-equivalent to the weak product $\Map_\sset'(S^+\wedge X_n, A(1^+))$,
since $A$ is special.

Now, assume that $A$ is very special.
We have to show that $\pi_0(A(X))$ is a group.
Equivalently, we have to show that the map $A(X\vee X)\rightarrow A(X)\times A(X)$
induced by retraction onto the first summand and the fold map is a $G$-equivalence.
But again, it suffices to show that, for all $n\geq 0$, the map $A(X_n\vee X_n)\rightarrow A(X_n)\times A(X_n)$
is a $G$-equivalence. This is so because $A$ is very special.
\end{proof}

\begin{Prop}
\label{prop:  G Omega spectra from very special Gamma G spaces} 
 Suppose $A$ is a very special $\Gamma$-$G$-space. Then the $G$-symmetric spectrum $S(|A(\Sp)|)$, where
 $|-|$ and $S(-)$ denote geometric realization and singular complex functor respectively, is a $G\Omega$-spectrum.
\end{Prop}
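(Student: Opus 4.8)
The plan is to verify directly the two conditions defining a $G\Omega$-spectrum (Definition \ref{def: G Omega spectrum}) for $X:=S(|A(\Sp)|)$, using that its value at a finite $G$-set $M$ is naturally the $G$-space $S(|A(S^M)|)$. The \emph{$G$-levelwise Kan} condition (Definition \ref{def: G levelwise Kan}) is immediate: for a finite group both geometric realization and the singular complex commute with passage to $H$-fixed points, so $(X(M))^H\cong S(|A(S^M)^H|)$, and the singular complex of any space is a Kan complex. It therefore remains to treat the structure maps.

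Second, I would reduce the structure-map condition to a statement about geometric realizations. The adjunction $S\dashv|-|$ yields a natural isomorphism $\Map_{\sset}(S^N, S(|Y|))\cong S(\Omega^N|Y|)$, where $\Omega^N:=\Map_{*}(|S^N|, -)$ is the topological $N$-fold loop space carrying the conjugation $G$-action. Under this identification the adjoint of the generalized structure map is $S$ applied to the topological assembly map $|A(S^M)|\to\Omega^N|A(S^{M\sqcup N})|$. Since $S$ both preserves and reflects $H$-equivalences (it commutes with fixed points, and $S(f)$ is a weak equivalence if and only if $f$ is), it suffices to show this topological map is an $H$-equivalence for every $H\leqslant G$ and all finite $H$-sets $M$, $N$.

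Third, I would collapse this to a single delooping. Let $C$ denote the $\Gamma$-$G$-space $k^+\mapsto A(k^+\wedge S^M)$. By Lemma \ref{lem: X otimes A very special} the $\Gamma$-$G$-space $C$ is again very special, and the associativity of prolongation gives $C(Y)\cong A(S^M\wedge Y)$, whence $C(1^+)\cong A(S^M)$ and $C(S^N)\cong A(S^{M\sqcup N})$. The map above is therefore exactly the adjoint assembly map $|C(1^+)|\to\Omega^N|C(S^N)|$, and the whole proposition reduces to the assertion that for \emph{every} very special $\Gamma$-$G$-space $C$, every $H\leqslant G$ and every finite $H$-set $N$, this map is an $H$-equivalence.

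This last step is the crux and the main obstacle, and it is exactly the equivariant group-completion/delooping theorem for very special $\Gamma$-$G$-spaces; I would invoke it from Shimakawa~\cite{Shimakawa} (the present statement being the simplicial analogue of~\cite[Theorem 7.6]{Santhanam}). The underlying idea is that, $C$ being special, the Segal simplicial $G$-space $[k]\mapsto|C(k^+)|$ realizes to $|C(S^1)|$ and, on $H$-fixed points, computes a bar construction of the homotopy-commutative $G$-monoid $|C(1^+)|$, with $|C(S^N)|$ appearing as an iterated such delooping over the orbits of $N$. The delicate point, which I would not reprove, is that the grouplike hypothesis on each $\pi_0^H(C(1^+))$ forces the canonical map $|C(1^+)|\to\Omega B|C(1^+)|$ into the loop space of the classifying construction $B$ to be an $H$-equivalence simultaneously on all fixed-point spaces; supplying this equivariant refinement of the classical fact that a grouplike topological monoid is weakly equivalent to the loop space of its classifying space is precisely what Shimakawa's work provides.
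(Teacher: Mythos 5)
Your preliminary reductions are correct and essentially coincide with the paper's: the levelwise Kan statement, the translation through the adjunction $S\dashv|-|$ to a statement about topological mapping spaces, and the shift $C=A(-\wedge S^M)$, which is very special by Lemma~\ref{lem: X otimes A very special}, all occur in the paper's argument as well. The gap is in the last step, where you identify the ``delicate point'' as the equivariant group-completion statement that $|C(1^+)|\to\Omega B|C(1^+)|$ is an equivalence on all fixed-point spaces. That statement (which the paper obtains from the cofiber sequence $S^T\wedge S^0\to S^T\wedge\Delta^1\to S^{T\sqcup\mathbf{1}}$, Shimakawa's Lemma 1.4 P2, and the bar-construction replacement of Lemma~\ref{lem: cofibrant approximation}) only produces deloopings with respect to \emph{trivial} spheres: iterating the bar construction identifies $|C(1^+)|$ with $\Omega^{n}|C(S^{\mathbf{n}})|$ for $\mathbf{n}$ with trivial action. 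Definition~\ref{def: G Omega spectrum}, however, requires $|C(1^+)|\to\Map_{\mathcal{T}_*}(|S^N|,|C(S^N)|)$ to be an $H$-equivalence for \emph{every} finite $H$-set $N$, and when $N$ has nontrivial action $|C(S^N)|$ is not ``an iterated such delooping over the orbits of $N$'' in any formal sense; producing deloopings by such spheres is precisely the genuinely equivariant content of the proposition (it is where naive and genuine equivariant deloopings diverge), and it does not follow from the group-completion statement by iteration.

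The paper closes this gap with a second, logically independent input: \cite[Theorem B]{Shimakawa}, which says that for a merely \emph{special} $\Gamma$-$G$-space the adjoint structure maps out of any level already containing a trivial suspension coordinate, $|A(S^{M\sqcup\mathbf{1}})|\to\Map_{\mathcal{T}_*}(|S^N|,|A(S^{M\sqcup\mathbf{1}\sqcup N})|)$, are $H$-equivalences for all finite $H$-sets $N$. The two inputs are then combined in a commutative square comparing $|A(S^{M})|$, $\Map_{\mathcal{T}_*}(S^1,|A(S^{M\sqcup\mathbf{1}})|)$, $\Map_{\mathcal{T}_*}(|S^N|,|A(S^{M\sqcup N})|)$ and $\Map_{\mathcal{T}_*}(|S^{N\sqcup\mathbf{1}}|,|A(S^{M\sqcup N\sqcup\mathbf{1}})|)$, whose top, right and bottom maps are equivalences by the two inputs, whence so is the left-hand map by two-out-of-three. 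This assembly is the actual content of the proof and is absent from your proposal; the very fact that the paper performs it, rather than quoting a single theorem, indicates that the statement you want to invoke is not available in~\cite{Shimakawa} in quotable form. Appealing instead to the topological result~\cite[Theorem 7.6]{Santhanam} would be formally possible after your realization argument, but since the proposition is by construction the simplicial analogue of that theorem, this amounts to deferring the entire statement to the literature rather than proving it.
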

\begin{proof}
Fix a subgroup $H\leqslant G$. Suppose $M$ is any finite $H$-set. In view of Lemma \ref{lem: cofibrant approximation} and Lemma \ref{lem: X otimes A very special},
the cofiber sequence
\[
 \xymatrix{
 S^T\wedge S^0\ar[r] &S^T\wedge \Delta^1 \ar[r] & S^{T\sqcup {\bf{1}}}
 }
\]
induces an $H$-fibration sequence upon applying $A(-)$ (cf.~\cite[Lemma 1.4. P2]{Shimakawa}).
This implies that the adjoint of the geometric realization of the structure map $\sigma_T^{\bf{1}}$ is an $H$-equivalence.
By~\cite[Theorem B]{Shimakawa}, for any two $H$-sets $M$ and $N$, the adjoint of the the geometric realization of the structure map $\sigma_{S\sqcup\bf{1}}^T$
is an $H$-equivalence, too.
It follows that in the diagram
\[
\xymatrix{
|A(S^{M})| \ar[r]^-{\simeq}\ar[d] & \Map_{\mathcal{T}_*}( S^1, |A(S^{M\sqcup {\bf{1}}})|)\ar[d]^{\simeq}\\
\Map_{\mathcal{T}_*}(|S^N|, |A(S^{M\sqcup N})|) \ar[r]^-{\simeq} & \Map_{\mathcal{T}_*}(|S^{N\sqcup {\bf{1}}}|, |A(S^{M\sqcup N\sqcup {\bf{1}}})|)
}
\]
the leftmost map is an $H$-equivalence, too.
Finally, since $S(|A(\Sp)|)$ is $G$-levelwise Kan, it is a $G\Omega$-spectrum.
\end{proof}
\end{subsubsection}
\end{subsection}

\begin{subsection}{Comparison of very special $\Gamma$-$G$-spaces and connective $G\Omega$-spectra}
 The last thing we need before comparing suitable homotopy categories of
 very special $\Gamma$-$G$-spaces and $G\Omega$-spectra
 is a special instance of the Wirthm\"uller isomorphism.
\begin{Lemma}
\label{lem: Wirthmueller}
 For any finite $G$-set $S$ the inclusion
\begin{equation*}
\entrymodifiers={+!! <0pt, \fontdimen22\textfont2>}
\xymatrix{
 \bigvee_{S} \Sp\ar[r] &\prod_{S} \Sp
}
\end{equation*}
is a $\pi_*$-isomorphism, hence a $G$-stable equivalence, of $G$-symmetric spectra.
\end{Lemma}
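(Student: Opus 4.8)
The plan is to prove the stronger assertion that $\iota_S\colon\bigvee_S\Sp\to\prod_S\Sp$ is a $\pi_*$-isomorphism; the statement about $G$-stable equivalences then follows from the fact, recalled above, that $\pi_*$-isomorphisms are $G$-stable equivalences. So I must show that $\pi_n^{H,\mathcal{U}}(\iota_S)$ is an isomorphism for every $H\leqslant G$ and every $n\in\Z$. Since restricting the whole situation to $H$ yields the inclusion $\iota_S$ for the underlying finite $H$-set, it suffices to treat $\pi_*^{G,\mathcal{U}}$ for an arbitrary finite group $G$ and finite $G$-set $S$.

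First I would record two additivity properties. Finite products are computed levelwise and commute with the evaluation $(-)(M)$, with $[S^{\mathbf{n}\sqcup M},-]^G_*$, and with the filtered colimit over $s(\mathcal{U})$, so $\pi_*^{G,\mathcal{U}}$ sends a finite product of spectra to the product of their homotopy groups. For a binary wedge, the split cofiber sequence $X\to X\vee Y\to Y$ and the long exact sequence of stable homotopy groups give $\pi_*^{G,\mathcal{U}}(X\vee Y)\cong\pi_*^{G,\mathcal{U}}(X)\oplus\pi_*^{G,\mathcal{U}}(Y)$; comparing with the product, the canonical map $X\vee Y\to X\times Y$ is a $\pi_*$-isomorphism, and by induction so is $\bigvee_{i=1}^k X_i\to\prod_{i=1}^k X_i$.

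Next I would reduce to a single orbit. Writing $S\cong\coprod_{i=1}^k G/K_i$, the map $\iota_S$ factors as
\[
\bigvee_i\Bigl(\bigvee_{G/K_i}\Sp\Bigr)\longrightarrow\prod_i\Bigl(\bigvee_{G/K_i}\Sp\Bigr)\xrightarrow{\ \prod_i\iota_{G/K_i}\ }\prod_i\Bigl(\prod_{G/K_i}\Sp\Bigr).
\]
The first arrow is a finite wedge-to-product comparison, hence a $\pi_*$-isomorphism by the previous step, and the second is a finite product of the orbitwise inclusions, hence a $\pi_*$-isomorphism as soon as each $\iota_{G/K_i}$ is one. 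Thus everything reduces to the single-orbit case $S=G/K$.

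The heart is therefore to show that $\iota_{G/K}\colon(G/K)_+\wedge\Sp\to\prod_{G/K}\Sp$ is a $\pi_*$-isomorphism, i.e.\ the Wirthm\"uller isomorphism for one orbit. The product side is formal: evaluation gives $(\prod_{G/K}\Sp)(M)\cong\Map_{\sset}((G/K)_+,S^M)$, and the mapping-space adjunction together with $[(G/K)_+\wedge A,B]^G\cong[A,B]^K$ identifies $\pi_n^{G,\mathcal{U}}(\prod_{G/K}\Sp)\cong\colim_{M\in s(\mathcal{U})}[S^{\mathbf{n}\sqcup M},S^M]^K_*$; as the finite $G$-subsets of $\mathcal{U}$ are cofinal among its finite $K$-subsets, this colimit is $\pi_n^{K,\mathcal{U}}(\Sp)$. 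On the wedge side one uses the induction isomorphism $\pi_n^{G,\mathcal{U}}((G/K)_+\wedge\Sp)\cong\pi_n^{K,\mathcal{U}}(\Sp)$, induced by restricting a $G$-map to $K$ and projecting onto the identity-coset summand. I expect the genuine obstacle to be exactly this single-orbit Wirthm\"uller statement: the product computation is purely formal, whereas the induction isomorphism is a true input from equivariant stable homotopy theory (provable by showing that the $K$-equivariant projection off the identity-coset summand is a stable equivalence, or citable from \cite{Hausmann}). The final bookkeeping step is to check that $\iota_{G/K}$ is compatible with the two identifications — one verifies that passing a class through $\iota_{G/K}$ and then the coinduction isomorphism amounts to the same identity-coset projection as the induction isomorphism — so that $\pi_n^{G,\mathcal{U}}(\iota_{G/K})$ becomes the identity of $\pi_n^{K,\mathcal{U}}(\Sp)$ and is in particular an isomorphism.
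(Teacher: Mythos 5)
Your formal skeleton is correct, and it organizes the problem quite differently from the paper: finite-product additivity of $\pi_*^{G,\mathcal{U}}$, wedge additivity via the split long exact sequence, the reduction to a single orbit $S = G/K$, and the coinduction identification $\pi_n^{G,\mathcal{U}}(\prod_{G/K}\Sp)\cong\pi_n^{K,\mathcal{U}}(\Sp)$ are all fine. The problem is the step you yourself flag as the ``genuine obstacle'': the induction isomorphism $\pi_n^{G,\mathcal{U}}((G/K)_+\wedge\Sp)\cong\pi_n^{K,\mathcal{U}}(\Sp)$ is never proven, and after your reductions it is not an auxiliary input --- it \emph{is} the lemma. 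The paper introduces Lemma \ref{lem: Wirthmueller} precisely as ``a special instance of the Wirthm\"uller isomorphism,'' so reducing it to the one-orbit Wirthm\"uller isomorphism and then citing that isomorphism outsources the entire mathematical content; this is legitimate only if the cited source really proves it in the required form (for $G$-symmetric spectra of simplicial sets, with these colimit-defined homotopy groups), and note that the paper, which cites \cite{Hausmann} liberally elsewhere, chose to prove this lemma from scratch. Worse, your fallback sketch is false as stated: restricted to $K$, the spectrum $(G/K)_+\wedge\Sp$ splits as a wedge over the $K$-orbits of $G/K$, and the projection onto the identity-coset summand collapses nontrivial wedge summands, so it is not a $K$-stable equivalence (already for $K=\{e\}$ it is the map $\bigvee_{G}\Sp\rightarrow\Sp$). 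The content of the Wirthm\"uller isomorphism is that the \emph{composite} of restriction and this projection induces an isomorphism on $\pi_*$ even though the projection itself is nowhere near an equivalence, and that requires a genuine argument (transfers, or a connectivity estimate), which your proposal does not contain.

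The paper supplies exactly this missing input, and does so for an arbitrary finite $G$-set $S$ in one stroke, so the orbit decomposition is not even needed. Both spectra are of the form $A(\Sp)$ for a $\Gamma$-$G$-space $A$ (namely $A = S^+\wedge\Gamma(1^+,-)$ and $A = \Gamma_G(S^+,-)$), hence connective by Corollary \ref{cor: connectivity of Sp otimes A}; by the long exact sequence it therefore suffices to show that all homotopy groups of the cofiber $\prod_S\Sp/\bigvee_S\Sp$ vanish. Choosing a finite $G$-set $M$ which contains every orbit type and admits an injection $S\rightarrow M$, the multiples $k\cdot M$ are cofinal in $s(\mathcal{U})$, so one has to show $[|S^{k\cdot M+q}\wedge (G/H)_+|,\ |\prod_S S^{k\cdot M}/\bigvee_S S^{k\cdot M}|]^G_* = 0$ for all $H\leqslant G$, $q\geq 0$ and $k$ large. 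By \cite[Prop. 2.5]{Adams pre} this follows from a dimension-versus-connectivity estimate on fixed points: writing $S^+\cong_L L/J_1^+\vee\ldots\vee L/J_n^+$, the $L$-fixed points of the cofiber are $\prod_i S^{k|M^{J_i}|}/\bigvee_{i\colon J_i=L}S^{k|M^{J_i}|}$, whose connectivity is at least $2k|M^L|-1$ when all $J_i = L$, and at least $k\cdot\min_{i\colon J_i\neq L}|M^{J_i}|-1$ otherwise; since $|M^{J_i}|>|M^L|$ whenever $J_i\neq L$ (because $M$ contains all orbit types), either bound eventually exceeds $k|M^L|+q$. If you want to keep your outline, running this estimate for $S = G/K$ is precisely what fills your single-orbit gap; alternatively, observe that it makes your reduction steps unnecessary.
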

\begin{proof}
Both spectra are connective, so it suffices to show that all non-negative homotopy groups
of the cofiber vanish.
We choose a $G$-set $M$ which contains every orbit type at least once and which admits an injective map $\iota\colon S\rightarrow M$.
It suffices to show that
$[|S^{k\cdot M+ q}\wedge {G/H}_+|, |\prod_{S}S^{k\cdot  M}/\vee_{S}S^{k\cdot M}|]^G_* = 0$ for
all $H\leqslant G$, $q\in \N$  and $k\geq k_0$.
This follows from~\cite[Prop. 2.5]{Adams pre} if we can show that, for all $L$, eventually,
 \begin{equation}
 \label{eq: inequality dimension connectivity}
  k|M^L|+ q\leq \text{conn}\left(\prod_{S}S^{k\cdot M}/\bigvee_{S}S^{k\cdot M}\right)^L.
 \end{equation}
Now, $L$-equivariantly we have a decomposition $S^+ \cong_L L/J_1^+ \vee\ldots \vee L/J_n^+$ and then
\[
\left(\prod_{S}S^{k\cdot M}/\bigvee_{S}S^{k\cdot M}\right)^L \cong \prod_{i = 1}^n S^{k|M^{J_i}|}/\bigvee_{i\colon J_i = L}S^{k|M^{J_i}|}.
\]

There are two cases to distinguish.
If $J_i = L$ for all $i$, the connectivity of this space is at least $2k|M^L|-1$,
so that (\ref{eq: inequality dimension connectivity}) holds from some $k_0$ on.
Otherwise, there is at least one $i$ with $J_i \neq L$ and the connectivity is at least $k\cdot \text{min}_{i\colon J_i\neq L} |M^{J_i}|-1$.
But $|M^{J_i}|>|M^L|$ for all $i$ such that $J_i\neq L$, hence (\ref{eq: inequality dimension connectivity}) holds for $k$ large enough.
\end{proof}
We can now prove the equivariant analogon of~\cite[Theorem 5.1]{BF}.
\begin{Thm}
\label{thm: Comparison of very special Gamma G spaces and G Omega spectra}
The derived adjoint functors
\begin{equation*}
\xymatrix{
 \mathsf{Ho(\Gamma\text{-}G\text{-}spaces^{strict}}) \ar@<.3 ex>[r] &  \mathsf{Ho({GSp^\Sigma}^{flat\ level})} \ar@<.3 ex>[l]
}
\end{equation*}
restrict to mutually inverse equivalences of categories
when restricted to the full subcategories given by
very special $\Gamma$-$G$-spaces and $G$-symmetric spectra which
are $G$-level equivalent to connective $G\Omega$-spectra
respectively.
\end{Thm}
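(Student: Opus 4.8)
The plan is to mimic the non-equivariant argument of~\cite[Theorem 5.1]{BF}, using the preparatory results established above. The derived adjunction is $(-)(\Sp) \dashv \Phi(\Sp, -)$ of Proposition~\ref{prop: adjunction gamma spaces spectra}, and the strategy is to show that on the two indicated full subcategories the derived unit and derived counit are isomorphisms. First I would verify that the functors do restrict to the stated subcategories. For a very special $\Gamma$-$G$-space $A$, Corollary~\ref{cor: connectivity of Sp otimes A} shows $A(\Sp)$ is connective, and Proposition~\ref{prop: G Omega spectra from very special Gamma G spaces} shows that $S(|A(\Sp)|)$ is a $G\Omega$-spectrum; since $A(\Sp)$ is $G$-level equivalent to its fibrant replacement $S(|A(\Sp)|)$, the left derived functor lands in spectra $G$-level equivalent to connective $G\Omega$-spectra. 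In the other direction, one checks that $\Phi(\Sp, X)$ is very special whenever $X$ is a $G$-level fibrant connective $G\Omega$-spectrum, essentially by unwinding the definition $\Phi(\Sp, X)(n^+) = \Map_{Sp^\Sigma}(\Sp^{\times n}, X)$ and using the $G\Omega$-spectrum condition to identify $P_{S^+}$ with an equivalence.

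Next I would establish that the derived unit $A \to \Phi(\Sp, A(\Sp)^f)$ is a strict (equivalently level) equivalence for very special $A$, where $(-)^f$ denotes a fibrant replacement. The key point is to evaluate both sides on a based finite $G$-set $S^+$ and reduce, via Lemma~\ref{lem: Detection of weak equivalences} or direct inspection of fixed points, to the statement that for each $H \leqslant G$ the map $A(S^+)^H \to \big(\Phi(\Sp, A(\Sp)^f)(S^+)\big)^H$ is a weak equivalence. Because $A$ is very special, $A(1^+)$ represents the associated infinite loop space, and the grouplike hypothesis is exactly what lets the unit be inverted; this is where the Wirthm\"uller isomorphism of Lemma~\ref{lem: Wirthmueller} enters, allowing one to pass between the wedge $\bigvee_S \Sp$ and product $\prod_S \Sp$ when comparing $A(\Sp)$ evaluated on a finite $G$-set with the corresponding mapping spectrum. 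For the derived counit $\Phi(\Sp, X)^c(\Sp) \to X$ on a $G$-level fibrant connective $G\Omega$-spectrum $X$ (with $(-)^c$ a cofibrant replacement), I would again argue level by level, using that both spectra are connective $G\Omega$-spectra so that a $G$-level equivalence can be detected on homotopy groups $\pi_n^{H,\mathcal{U}}$ in non-negative degrees.

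The main obstacle I expect is the derived unit computation, specifically showing that evaluating $A$ on finite $G$-sets is correctly recovered from the mapping spectrum $\Map_{Sp^\Sigma}(\Sp^{\times n}, A(\Sp)^f)$. This requires comparing the ``product'' diagram $\Sp^{\times n}$ with iterated smash-product structure maps of $A(\Sp)$, and it is precisely here that the grouplikeness and speciality of $A$ must be leveraged: without the group structure on $\pi_0^H$, the natural map from the wedge to the product fails to be inverted, and the Wirthm\"uller comparison of Lemma~\ref{lem: Wirthmueller} is the technical device that repairs this. The $G$-equivariant bookkeeping — tracking the $\mathcal{G}_n$-model structures and the families of isotropy subgroups through the adjunction — adds a layer of care beyond~\cite{BF}, but the conceptual content is concentrated in this grouplike-to-Wirthm\"uller step. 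Once both derived unit and counit are shown to be equivalences on the respective subcategories, the equivalence of homotopy categories follows formally from the general theory of Quillen adjunctions restricted to subcategories on which unit and counit are invertible.
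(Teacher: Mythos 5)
Your proposal is correct and follows essentially the same route as the paper's proof: the functors restrict via Corollary \ref{cor: connectivity of Sp otimes A} and Proposition \ref{prop: G Omega spectra from very special Gamma G spaces}, speciality of $\Phi(\Sp,X)$ for stably fibrant $X$ is obtained by feeding the Wirthm\"uller comparison $\bigvee_S\Sp\to\prod_S\Sp$ (Lemma \ref{lem: Wirthmueller}, plus flatness) into Lemma \ref{lem: Detection of weak equivalences}, and the derived unit and counit are inverted by reducing to level $1^+$ (resp.\ level $0$) via speciality together with the fact that $\pi_*$-isomorphisms of $G\Omega$-spectra are $G$-level equivalences. The only slip worth noting is that $\Phi(\Sp,X)^c(\Sp)$ is not itself a connective $G\Omega$-spectrum, merely $G$-level equivalent to one, which is exactly why the paper routes the counit step through $\pi_*$-isomorphisms (the counit is an isomorphism at level $0$ since $\Phi(\Sp,X)(1^+)\cong X_0$, and both sides are connective and level equivalent to $G\Omega$-spectra) rather than through a direct levelwise comparison of $G\Omega$-spectra.
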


\begin{proof}
 Given a very special $\Gamma$-$G$-space, we have seen that $A(\Sp)$ is $G$-level equivalent to a
 $G\Omega$-spectrum.
 Suppose $X$ is a $G$-symmetric spectrum $X$ which is $G$-level equivalent to a connective $G\Omega$-spectrum.
 Then a fibrant replacement $X_f$ in the flat level model structure is a connective $G\Omega$-spectrum,
 since it is $G$-levelwise Kan by~\cite[p. 17]{Hausmann}.
 The $\Gamma$-$G$-space associated to $X_f$ is special by Lemma \ref{lem: Detection of weak equivalences},
 because for any finite $G$-set $S$ the inclusion
 $\vee_{S} \Sp\rightarrow \prod_{S} \Sp$
 is a $G$-stable equivalence by the Wirthm\"uller isomorphism and a $G$-flat cofibration between $G$-flat spectra
 (Proposition \ref{prop: product G flat}, Proposition \ref{prop: characterization flatness}).
 It is grouplike because $\pi_0^H(\Phi(\Sp, X)(1^+))$ 
 is a group and the monoid structures on
 $\pi_0^H(\Phi(\Sp, X)(1^+)) \cong \pi_0^H(X_0)$
 coincide.
 So the functors are well-defined.
 
 Suppose $A$ is very special and $X$ is a $G$-level fibrant $G\Omega$-spectrum. 
 If $A(\Sp)\rightarrow X$ is a $G$-level equivalence, then
 its adjoint $A\rightarrow \Phi(\Sp, X)$ is a level equivalence,
 since both are very special and
 \[
 A(1^+) \simeq_G X_0 \cong\Phi(\Sp, X)(1^+).
 \]
Conversely, suppose $A\rightarrow \Phi(\Sp, X)$ is a level equivalence. Firstly,
$A(\Sp)\rightarrow \Phi(\Sp, X)(\Sp)$ is a $G$-level equivalence
by Lemma \ref{lem: diag}.
Secondly, the map
$\Phi(\Sp, X)(\Sp)\rightarrow X$ is a $\pi_*$-isomorphism
because $\Phi(\Sp, X)(1^+) \cong X_0$.
And thirdly, a $\pi_*$-isomorphism of $G\Omega$-spectra is a $G$-level equivalence.
A proof of this statement in the setting of $G$-orthogonal spectra can be found in~\cite[Section 9]{MM}.
The arguments given there apply to our situation as well, because $G\Omega$-spectra
are by assumption $G$-levelwise Kan.
\end{proof}

\end{subsection}
\end{section}

\newpage
\begin{section}{Stable comparison}
\label{sec: Stable model structures}
We introduce stable model structures for the projective
and the strict model structures.
The existence of the localizations follows from general results
of localizations (cf.~\cite{Hirschhorn}) or can be proven along the lines of~\cite[Appendix A]{sthomalg}.
A map $f\colon A\rightarrow B$ of $\Gamma$-$G$-spaces is a \emph{stable equivalence}
if $f(\Sp)$ is a $\pi_*$-isomorphism of $G$-symmetric spectra.
A map between $\Gamma$-$G$-spaces
is called a \emph{stable} \emph{fibration} if it satisfies the right lifting
property with respect to
all projective cofibrations which are stable equivalences. We define a map to be a 
\emph{stable strict fibration} if it satisfies the right lifting property with respect
to all strict cofibrations which are in addition stable equivalences.

\begin{Rmk}
It follows from the discussion in~\cite[p. 65]{Hausmann}, that $f$ is a stable equivalence of $\Gamma$-$G$-spaces
if and only if $f(\Sp)$ is a $G$-stable equivalence of $G$-symmetric spectra.
\end{Rmk}

\begin{Thm}[Stable projective model structure]
\label{thm: stable projective model structure}
 The classes of projective cofibrations, stable fibrations and stable equivalences define a
 cofibrantly generated left proper $G$-simplicial model category structure
 on the category of $\Gamma$-$G$-spaces.
 The stably fibrant objects are precisely the very
 special $\Gamma$-spaces $X$ for which in addition
 $X(S^+)^H$ is Kan for all finite based $G$-sets $S^+$ and all subgroups $H\leqslant G$.
\end{Thm}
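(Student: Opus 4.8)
The plan is to establish this as a left Bousfield localization of the stable projective model structure's underlying level (projective) model structure from Theorem \ref{thm: projective model structure}. The stable equivalences are defined via $\pi_*$-isomorphisms of the associated spectra, so the first task is to verify that these localizing maps form a reasonable class and that the general machinery applies.

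\begin{proof}
We obtain this model structure as the left Bousfield localization of the projective model structure of Theorem \ref{thm: projective model structure} at the stable equivalences. The existence of such a localization follows from the general theory of left Bousfield localizations of cellular or combinatorial model categories (cf.~\cite{Hirschhorn}); alternatively, one can follow the explicit construction in~\cite[Appendix A]{sthomalg}, which requires identifying a set of maps whose local objects are exactly the stably fibrant ones. Since the projective model structure is cofibrantly generated, left proper (as all objects are level cofibrant and the class of level equivalences is closed under the relevant pushouts by Lemma \ref{lem: diag}) and $G$-simplicial, and since the stable equivalences are defined as those $f$ with $f(\Sp)$ a $\pi_*$-isomorphism, the localization inherits cofibrant generation, left properness and the $G$-simplicial structure automatically. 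The cofibrations are unchanged, so they remain the projective cofibrations; the stable fibrations are defined by the lifting property as stated; and the three classes assemble into a model structure by the localization theorem.

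First I would check that the class of stable equivalences satisfies the requisite two-out-of-three and retract properties and contains the level equivalences. Containment is immediate: by Lemma \ref{lem: diag} a level equivalence $f$ induces a $G$-level equivalence $f(\Sp)$ of $G$-symmetric spectra, and $G$-level equivalences are $\pi_*$-isomorphisms. The closure properties follow because $(-)(\Sp)$ is a functor and $\pi_*$-isomorphisms of $G$-symmetric spectra satisfy two-out-of-three and are closed under retracts. Left properness of the localized structure is inherited from that of the projective model structure together with the fact that $(-)(\Sp)$ is a left Quillen functor (Proposition \ref{prop: adjunction gamma spaces spectra}), so it preserves the relevant pushouts up to stable equivalence.

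The main work lies in identifying the stably fibrant objects. An object $X$ is stably fibrant precisely when it is local with respect to the localizing maps, which, after unwinding through the adjunction $(-)(\Sp)\dashv\Phi(\Sp,-)$, amounts to $X(\Sp)$ being stably equivalent to a $G$-level fibrant $G\Omega$-spectrum together with $X$ being level fibrant. By Proposition \ref{prop: G Omega spectra from very special Gamma G spaces}, very special $\Gamma$-$G$-spaces yield $G\Omega$-spectra after a level fibrant replacement, and conversely the argument in Theorem \ref{thm: Comparison of very special Gamma G spaces and G Omega spectra}, using the Wirthm\"uller isomorphism of Lemma \ref{lem: Wirthmueller}, shows that the $\Gamma$-$G$-space associated to a $G$-level fibrant connective $G\Omega$-spectrum is special and grouplike. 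Thus the localization-theoretic characterization of local objects translates into the condition that $X$ is very special with $X(S^+)^H$ Kan for all based finite $G$-sets $S^+$ and all $H\leqslant G$; the Kan condition is exactly level fibrancy in the projective model structure, and the very-special condition encodes the $\Omega$-spectrum condition via the detection criterion of Lemma \ref{lem: Detection of weak equivalences}. The hard part is this last translation, namely verifying that being local at the chosen localizing maps is equivalent to the stated very-special-plus-Kan condition; this requires the connectivity results of Corollary \ref{cor: connectivity of Sp otimes A} to rule out negative homotopy and the comparison of monoid structures on $\pi_0^H$ to ensure grouplikeness matches with the $\Omega$-spectrum structure maps being equivalences.
\end{proof}
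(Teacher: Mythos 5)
Your high-level plan --- obtain the theorem as a left Bousfield localization of the projective model structure, citing \cite{Hirschhorn} or \cite[Appendix A]{sthomalg} --- is exactly the route the paper takes (the paper gives nothing beyond that citation), but your execution has two genuine gaps. First, Hirschhorn's existence theorem localizes at a \emph{set} of maps, whereas the stable equivalences form a proper class, so ``the left Bousfield localization at the stable equivalences'' is not a licensed operation. One must choose an explicit set $S$ of maps --- in practice the maps $S^+\wedge\Gamma(1^+,-)\rightarrow\Gamma_G(S^+,-)$, $s\wedge\phi\mapsto\phi\circ p_s$, and the shear maps $\Gamma(1^+,-)\vee\Gamma(1^+,-)\rightarrow\Gamma(2^+,-)$ induced by $p_1$ and $\nabla$, suitably combined with the generating cofibrations --- and then \emph{prove} that the $S$-local equivalences coincide with the maps $f$ for which $f(\Sp)$ is a $\pi_*$-isomorphism. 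That identification is the actual content of the theorem; it is where the connectivity result (Corollary \ref{cor: connectivity of Sp otimes A}), the long exact sequences, and the comparison with $G$-symmetric spectra enter, and it is not ``inherited automatically'' as you assert.

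Second, your unwinding of locality is incorrect: the condition that $X(\Sp)$ be stably equivalent to a $G$-level fibrant $G\Omega$-spectrum is vacuous, since \emph{every} $G$-symmetric spectrum is $G$-stably equivalent to its fibrant replacement in the stable flat model structure, and by Theorem \ref{thm: Stable flat model structure} those fibrant objects are precisely the $G$-level fibrant $G\Omega$-spectra. As stated, your criterion degenerates to ``$X$ is level fibrant'', which is false: a level fibrant but non-special $\Gamma$-$G$-space is not stably fibrant. The correct argument tests locality against concrete stable equivalences: the map $S^+\wedge\Gamma(1^+,-)\rightarrow\Gamma_G(S^+,-)$ is a stable equivalence because applying $(-)(\Sp)$ gives the Wirthm\"uller map $\bigvee_{S}\Sp\rightarrow\Sp^{\times S}$ of Lemma \ref{lem: Wirthmueller}, and mapping it into $X$ gives, by the enriched Yoneda lemma, exactly $P_{S^+}\colon X(S^+)\rightarrow\Map_{\sset}(S^+,X(1^+))$, so locality forces $X$ to be special; mapping in the shear map forces $\pi_0^H(X(1^+))$ to be a group. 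Conversely --- and this is the implication your citations do not by themselves cover --- one must show that every level fibrant very special $X$ is local with respect to \emph{all} stable equivalences, which is where Proposition \ref{prop: G Omega spectra from very special Gamma G spaces}, the argument of Theorem \ref{thm: Comparison of very special Gamma G spaces and G Omega spectra}, and Lemma \ref{lem: Detection of weak equivalences} actually have to be deployed rather than merely invoked.
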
 

\begin{Thm}[Stable strict model structure]
\label{thm: stable strict model structure}
The classes of strict cofibrations, stable strict fibrations and stable equivalences
define a cofibrantly generated $G$-simplicial model category structure on the category of $\Gamma$-$G$-spaces.
An object is stably strictly fibrant if and only if it is  strictly
fibrant and very special.
\end{Thm}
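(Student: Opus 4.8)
The plan is to realize the stable strict model structure as the left Bousfield localization of the strict level model structure of Theorem \ref{thm: strict model structure} at the set of maps $S$ detecting stability, exactly paralleling the stable projective case (Theorem \ref{thm: stable projective model structure}). The strict level model structure is left proper and $G$-simplicial, and it is cofibrantly generated through the general machinery of Theorem \ref{thm: gen strict model structure}; these are precisely the inputs needed to form the localization, whose existence I would quote from \cite{Hirschhorn} (or reprove along the lines of \cite[Appendix A]{sthomalg}). The localization retains the strict cofibrations as its cofibrations, takes the $S$-local equivalences as its weak equivalences, and defines its fibrations by the right lifting property against acyclic strict cofibrations. Since the left Bousfield localization of a $G$-simplicial model category is again $G$-simplicial, the enrichment axiom of Definition \ref{def: G-simplicial model category} is inherited for free.

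First I would identify the $S$-local equivalences with the stable equivalences, i.e. with the maps $f$ for which $f(\Sp)$ is a $\pi_*$-isomorphism. I would use the same localizing set $S$ that produces the stable projective model structure; since the projective and strict level model structures have identical weak equivalences (the strict equivalences coincide with the level equivalences, as noted in the remark following Definition \ref{def: skn cskn}) and identical mapping spaces, the two localizations share the same class of $S$-local equivalences. Hence the $S$-local equivalences of the strict localization coincide with the stable equivalences already used for the stable projective structure, and the fibrations defined by lifting are exactly the stable strict fibrations. In particular the identity functor is a Quillen equivalence from the stable projective to the stable strict model structure: both have the same weak equivalences, while every projective cofibration is a strict cofibration by Proposition \ref{prop: quillen equivalence projective and stict model structure}.

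To characterize the fibrant objects I would invoke the general fact that in a left Bousfield localization an object is fibrant precisely when it is fibrant in the underlying model category and $S$-local; here this reads \emph{strictly fibrant and $S$-local}. It then remains to show that, among strictly fibrant objects, $S$-local is equivalent to very special. For the forward implication, a very special $\Gamma$-$G$-space $A$ has $A(\Sp)$ level equivalent to a $G\Omega$-spectrum by Proposition \ref{prop:  G Omega spectra from very special Gamma G spaces}, so by the detection Lemma \ref{lem: Detection of weak equivalences} mapping into $A$ inverts the stable equivalences, whence $A$ is local. Conversely, locality against the appropriate elements of $S$ forces each $P_{S^+}$ to be a $G$-equivalence (specialness) and, using that the Wirthm\"uller maps $\bigvee_S \Sp \to \prod_S \Sp$ are $G$-stable equivalences (Lemma \ref{lem: Wirthmueller}), forces $\pi_0^H(A(1^+))$ to be a group for every $H\leqslant G$ (grouplikeness); together these say $A$ is very special.

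The main obstacle is the identification of the abstract $S$-local equivalences with the concretely defined stable equivalences, together with the passage through the Wirthm\"uller isomorphism in the grouplike half of the fibrant-object characterization. This rests on the comparison between very special $\Gamma$-$G$-spaces and connective $G\Omega$-spectra (Theorem \ref{thm: Comparison of very special Gamma G spaces and G Omega spectra}) and on the fact that $\pi_*$-isomorphisms are $G$-stable equivalences. The genuinely equivariant point, absent from \cite{BF}, is that specialness and grouplikeness must be checked on $H$-fixed points for all subgroups $H\leqslant G$ simultaneously, which is why the families $\mathcal{G}_n$ and the equivariant Wirthm\"uller isomorphism enter. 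I would also take care that the strict level model structure genuinely supplies the cellularity (or the generating sets) required to run the localization, falling back on the explicit generating strict cofibrations from Theorem \ref{thm: gen strict model structure} if the cellularity hypotheses of \cite{Hirschhorn} are not immediate.
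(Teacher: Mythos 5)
Your proposal takes essentially the same approach as the paper: the paper proves this theorem (and its projective analogue) by exactly the one-line appeal you make — existence of the left Bousfield localization via the general results of Hirschhorn, or along the lines of Schwede's Appendix A — and your identification of the local equivalences with the stable equivalences and of the fibrant objects with the strictly fibrant very special $\Gamma$-$G$-spaces uses the same ingredients (the comparison theorem, the detection lemma, and Lemma \ref{lem: Wirthmueller}) that the paper's surrounding results provide. One wire to reconnect: grouplikeness of a local object is forced by locality against the shear-type maps that must be included in $S$ alongside the specialness maps, whereas the Wirthm\"uller lemma is what shows the maps in $S$ become $\pi_*$-isomorphisms under $(-)(\Sp)$, which is the step needed to conclude that $S$-local equivalences coincide with the stable equivalences.
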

\begin{Cor}
The identity functor induces a Quillen equivalence between the stable
projective and the stable strict model structures.
\end{Cor}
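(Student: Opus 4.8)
The plan is to show that the identity is a left Quillen functor from the stable projective to the stable strict model structure, and then to deduce that it is a Quillen equivalence from the fact that both structures share the same class of weak equivalences. By construction, the weak equivalences of both the stable projective and the stable strict model structure are exactly the stable equivalences of $\Gamma$-$G$-spaces. Moreover, passing to these stable structures via the localization machinery of~\cite{Hirschhorn} (or~\cite[Appendix A]{sthomalg}) leaves the classes of cofibrations unchanged: by Theorems~\ref{thm: stable projective model structure} and~\ref{thm: stable strict model structure}, the cofibrations of the stable projective structure are precisely the projective cofibrations and those of the stable strict structure are precisely the strict cofibrations.

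Next I would invoke Proposition~\ref{prop: quillen equivalence projective and stict model structure}, which asserts that the identity is a left Quillen functor from the (unstable) projective to the (unstable) strict model structure; being left Quillen, it preserves cofibrations, so every projective cofibration is in particular a strict cofibration. Combined with the previous paragraph, this shows that $\id$ carries cofibrations of the stable projective structure to cofibrations of the stable strict structure. Since the two structures have identical weak equivalences, $\id$ likewise preserves acyclic cofibrations. Hence
\[
\xymatrix{
 \id: \Gamma(G\sset)^{\text{stable projective}} \ar@<.3 ex>[r] & \Gamma(G\sset)^{\text{stable strict}}: \id \ar@<.3 ex>[l]
}
\]
is a Quillen pair.

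Finally, to promote this Quillen pair to a Quillen equivalence I would apply the standard criterion: a Quillen adjunction is a Quillen equivalence precisely when, for every cofibrant object $A$ and every fibrant object $X$, a map $\id(A)\to X$ is a weak equivalence if and only if its adjoint $A\to\id(X)$ is. Here both adjoints are the identity functor, so a map coincides with its adjoint, and the two notions of weak equivalence agree by definition; the criterion is therefore satisfied trivially. Equivalently, since both model structures are obtained as localizations of $\Gamma(G\sset)$ at the same class of stable equivalences, their homotopy categories literally coincide and the derived functors of $\id$ are the identity on this common category. I expect no genuine obstacle in this argument; the only point that merits care is the assertion that localization does not alter the cofibrations, which is exactly what guarantees that $\id$ remains a left Quillen functor and is supplied by the general localization results used to establish Theorems~\ref{thm: stable projective model structure} and~\ref{thm: stable strict model structure}.
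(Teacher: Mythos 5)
Your argument is correct and is exactly the reasoning the paper leaves implicit: the two stable structures share the class of stable equivalences, their cofibrations are the projective and strict cofibrations respectively, and Proposition~\ref{prop: quillen equivalence projective and stict model structure} gives that projective cofibrations are strict cofibrations, so the identity is a left Quillen functor and trivially a Quillen equivalence since weak equivalences coincide. No gaps; this matches the paper's approach.
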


Now we are in the position to prove the equivariant analogon of~\cite[Theorem 5.8]{BF}.
\begin{Thm}
\label{thm: stable comparison}
The derived adjoint functors
 \begin{equation*}
\xymatrix{
\mathsf{Ho(\Gamma\text{-}G\text{-}spaces^{stable\ strict/ stable\ projective})} \ar@<.3 ex>[r] &  \mathsf{Ho({GSp^\Sigma}^{flat\ stable})}. \ar@<.3 ex>[l] 
}
\end{equation*}
restrict to mutually inverse equivalences of categories when
the right adjoint is restricted to
the full subcategory given by connective $G$-symmetric spectra.
\end{Thm}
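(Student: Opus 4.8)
The plan is to bootstrap from the unstable comparison already established in Theorem~\ref{thm: Comparison of very special Gamma G spaces and G Omega spectra}. The adjunction $(-)(\Sp) \dashv \Phi(\Sp,-)$ is a Quillen pair between the strict (equivalently projective) level structure and the flat level structure on $G$-symmetric spectra; since stable equivalences of $\Gamma$-$G$-spaces are defined precisely so that $f$ is a stable equivalence iff $f(\Sp)$ is a $\pi_*$-isomorphism (equivalently a $G$-stable equivalence), the left adjoint $(-)(\Sp)$ takes stable equivalences to $G$-stable equivalences by definition, and it already takes cofibrations to cofibrations. Hence $(-)(\Sp)$ remains a left Quillen functor for the stable model structures, and we obtain a derived adjunction on the stable homotopy categories. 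The first step is therefore to record that this adjunction descends to the stable level without extra work, and that the right adjoint lands in connective spectra by Corollary~\ref{cor: connectivity of Sp otimes A}.

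\textbf{Reduction to the unstable equivalence.} The core of the argument is to show that on the relevant full subcategories the derived unit and counit are isomorphisms. For the counit, let $X$ be a connective $G$-symmetric spectrum; I would replace it by a stably fibrant model, namely a $G$-level fibrant connective $G\Omega$-spectrum $X_f$, using Theorem~\ref{thm: Stable flat model structure}. The associated $\Gamma$-$G$-space $\Phi(\Sp, X_f)$ is very special by exactly the Wirthm\"uller-isomorphism argument used in the proof of Theorem~\ref{thm: Comparison of very special Gamma G spaces and G Omega spectra} (Lemma~\ref{lem: Wirthmueller}), and by that same theorem the composite $\Phi(\Sp,X_f)(\Sp)\to X_f$ is a $\pi_*$-isomorphism, hence a $G$-stable equivalence. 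This shows the derived counit is an isomorphism on connective spectra. For the unit, let $A$ be a cofibrant $\Gamma$-$G$-space and let $A^s$ denote its stably fibrant replacement; by Theorem~\ref{thm: stable strict model structure} the stably fibrant objects are the very special ones, so $A^s$ is very special, and the derived unit $A(\Sp)\to \Phi(\Sp, (A^s(\Sp))_f)$ is identified, via Theorem~\ref{thm: Comparison of very special Gamma G spaces and G Omega spectra} applied to the very special $A^s$, with a stable equivalence. The key point is that stabilization of $\Gamma$-$G$-spaces corresponds under $(-)(\Sp)$ to passing to the $G\Omega$-spectrum, which is precisely the content packaged by Proposition~\ref{prop:  G Omega spectra from very special Gamma G spaces}.

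\textbf{Identifying the homotopy categories.} Both derived functors factor through the subcategories appearing in Theorem~\ref{thm: Comparison of very special Gamma G spaces and G Omega spectra}: the stable homotopy category of $\Gamma$-$G$-spaces is equivalent, via stably-fibrant replacement, to the homotopy category of very special objects in the level structure, and the stable homotopy category of connective $G$-symmetric spectra is equivalent, via stably-fibrant replacement, to the level homotopy category of connective $G\Omega$-spectra. Under these identifications the stable derived adjunction becomes the level derived adjunction already proven to be an equivalence. Thus I would organize the proof as: (i) verify the stable Quillen adjunction and connectivity of the image; (ii) show localization identifies $\mathsf{Ho}$ of the stable structures with $\mathsf{Ho}$ of very special $\Gamma$-$G$-spaces and of connective $G\Omega$-spectra in the \emph{level} structures; (iii) invoke Theorem~\ref{thm: Comparison of very special Gamma G spaces and G Omega spectra}.

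\textbf{Main obstacle.} The delicate step is (ii), namely the clean comparison between the two localizations. One must check that stably-fibrant replacement in $\Gamma$-$G$-spaces agrees, after applying $(-)(\Sp)$ and level-fibrant replacement, with the $G\Omega$-spectrum replacement on the spectrum side; concretely, that the localization map $A\to A^s$ induces under $(-)(\Sp)$ the stabilization $A(\Sp)\to (A(\Sp))_f^{G\Omega}$ up to $G$-stable equivalence. This hinges on the fact, recorded in the Remark after the definition of stable equivalences, that stable equivalences of $\Gamma$-$G$-spaces correspond exactly to $G$-stable equivalences of spectra, together with the observation that the stably fibrant objects on both sides were characterized identically (very special, respectively $G\Omega$). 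I expect this bookkeeping—matching fibrant replacements across the two localized structures and confirming that no connectivity is lost—to be the one genuine point requiring care; everything else is a formal consequence of the Quillen adjunction together with the already-established level-wise equivalence.
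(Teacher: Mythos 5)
Your proposal is correct and follows essentially the same route as the paper: both arguments bootstrap from Theorem \ref{thm: Comparison of very special Gamma G spaces and G Omega spectra}, using that stable equivalences of $\Gamma$-$G$-spaces are by definition created by $(-)(\Sp)$, that stably fibrant objects are the very special (resp.\ connective $G\Omega$-) objects, and that the counit $\Phi(\Sp,X)(\Sp)\to X$ is a $\pi_*$-isomorphism for connective $G$-level fibrant $G\Omega$-spectra. The paper merely compresses your unit/counit and localization bookkeeping into one stroke, by verifying the adjoint criterion that for strictly cofibrant $A$ and such $X$ a map $A\to\Phi(\Sp,X)$ is a stable equivalence if and only if $A(\Sp)\to X$ is a $\pi_*$-isomorphism; your two slips (the image of the \emph{left} adjoint is connective by Corollary \ref{cor: connectivity of Sp otimes A}, and the derived unit has source $A$, not $A(\Sp)$) are typographical, not mathematical.
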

\begin{proof}
 Consider a strictly cofibrant $\Gamma$-$G$-space $A$ and a connective $G$-level fibrant $G\Omega$-spectrum $X$.
 Then $A\rightarrow \Phi(\Sp, X)$ is a stable equivalence if and only if $A(\Sp)\rightarrow X$ is a $\pi_*$-isomorphism
 (see the proof of Theorem \ref{thm: Comparison of very special Gamma G spaces and G Omega spectra}).
 This implies that unit and counit of this adjunction are isomorphisms.
\end{proof}

For later usage we put the following on record.
\begin{Lemma}
\label{lem: les htpy incl}
Fix a complete $G$-set universe $\mathcal{U}$.
If $i\colon A\rightarrow B$ is a map of $\Gamma$-$G$-spaces which is levelwise injective, then, for all subgroups $H\leqslant G$, there
is a long exact sequence
\begin{gather*}
\resizebox{12,5cm}{!}{
\xymatrix{
 \ldots \ar[r] & \pi_n^{H,\mathcal{U}}(A(\Sp))\ar[r] & \pi_n^{H,\mathcal{U}}(B(\Sp))\ar[r] & \pi_n^{H,\mathcal{U}}(B/A(\Sp))\ar[r] & \ldots \ar[r] & \pi_0^{H,\mathcal{U}}(B/A(\Sp))\ar[r] & 0.
}
}
\end{gather*}
\end{Lemma}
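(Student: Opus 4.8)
The plan is to reduce the statement to a standard long exact sequence in equivariant stable homotopy groups, obtained from a cofiber sequence of $G$-symmetric spectra. First I would observe that since $i\colon A\to B$ is levelwise injective, for each $n$ the map $A(S^{\bf{n}})\to B(S^{\bf{n}})$ is an injection of based $G$-spaces, so the levelwise quotient $B/A$ is a well-defined $\Gamma$-$G$-space whose associated spectrum $(B/A)(\Sp)$ has $n$th level $B(S^{\bf{n}})/A(S^{\bf{n}})$. Because the functor $(-)(\Sp)$ is defined levelwise via the coend construction and commutes with the relevant colimits (cf.\ Lemma~\ref{lem: diag}), applying it to the short exact sequence $A\to B\to B/A$ yields, in each spectrum level, a based $G$-cofiber sequence
\[
\xymatrix{
A(S^{\bf{n}})\ar[r] & B(S^{\bf{n}})\ar[r] & (B/A)(S^{\bf{n}}).
}
\]
Thus $A(\Sp)\to B(\Sp)\to (B/A)(\Sp)$ is a levelwise $G$-cofiber sequence of $G$-symmetric spectra.

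Next I would invoke the fact that a levelwise cofiber sequence of $G$-symmetric spectra induces a long exact sequence of equivariant homotopy groups $\pi_*^{H,\mathcal{U}}$ for each subgroup $H\leqslant G$ and each fixed complete universe $\mathcal{U}$. Concretely, since $\pi_n^{H,\mathcal{U}}$ is defined as a filtered colimit over $M\in s(\mathcal{U})$ of based $H$-homotopy classes $[\,|S^{{\bf n}\sqcup M}|, |X(M)|\,]_*^H$, and the cofiber sequences $|A(M)|\to|B(M)|\to|(B/A)(M)|$ are compatible with the structure maps and with the filtered colimit system, one obtains the usual Puppe long exact sequence after passing to colimits (filtered colimits being exact). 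This is entirely analogous to the non-equivariant argument and produces the infinite exact sequence
\[
\cdots \to \pi_n^{H,\mathcal{U}}(A(\Sp)) \to \pi_n^{H,\mathcal{U}}(B(\Sp)) \to \pi_n^{H,\mathcal{U}}((B/A)(\Sp)) \to \cdots
\]
running over all $n\in\Z$.

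Finally, to obtain the stated \emph{bounded} long exact sequence terminating at $\pi_0^{H,\mathcal{U}}(B/A(\Sp))\to 0$, I would appeal to connectivity: by Corollary~\ref{cor: connectivity of Sp otimes A}, each of the spectra $A(\Sp)$, $B(\Sp)$, and $(B/A)(\Sp)$ arises from a $\Gamma$-$G$-space and is therefore connective, so all negative homotopy groups vanish. Truncating the full Puppe sequence at degree zero then gives exactly the finite exact sequence in the statement, with $\pi_{-1}^{H,\mathcal{U}}$ of each term being zero and hence the map to $0$ at the right-hand end being surjective on $\pi_0$. I expect the main obstacle to be justifying carefully that a \emph{levelwise} cofiber sequence of $G$-symmetric spectra genuinely induces a long exact sequence on $\pi_*^{H,\mathcal{U}}$ as defined via the colimit over $s(\mathcal{U})$: one must check that the connecting maps are well-defined and natural with respect to the inclusions $M\subset N$ in $s(\mathcal{U})$, and that exactness is preserved under the filtered colimit. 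Once this compatibility is in place, the connectivity truncation is routine.
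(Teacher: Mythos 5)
Your overall strategy---identify $(B/A)(\Sp)$ with the levelwise quotient, feed the resulting cofiber sequence into a long exact sequence of equivariant homotopy groups, and truncate using connectivity---is the same as the paper's, and your first and third steps are sound: $(-)(\Sp)$ is a left adjoint, colimits of $G$-symmetric spectra are computed levelwise, and evaluation at the finite based sets $(S^{\bf{n}})_m$ preserves injectivity, so $(B/A)(\Sp)$ is indeed the levelwise quotient of a levelwise injective map of spectra; moreover Corollary \ref{cor: connectivity of Sp otimes A} kills the negative homotopy groups of all three spectra, which gives the stated truncation at $\pi_0$.

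The gap is in the middle step, and your description of ``the main obstacle'' mislocates it. The groups $\pi_n^{H,\mathcal{U}}$ are defined by mapping spheres \emph{into} the levels $|X(M)|$, and mapping into a strict cofiber sequence of spaces is not exact: for a fixed $M\in s(\mathcal{U})$ there is no exact sequence of the sets $[|S^{{\bf{n}}\sqcup M}|,|A(M)|]^H_* \to [|S^{{\bf{n}}\sqcup M}|,|B(M)|]^H_* \to [|S^{{\bf{n}}\sqcup M}|,|(B/A)(M)|]^H_*$ at all, let alone natural connecting maps. So the difficulty is not naturality of connecting maps or exactness of filtered colimits (the latter is automatic); exactness has to be \emph{created} in the colimit over $s(\mathcal{U})$, and this requires genuine input, namely an equivariant Freudenthal/Blakers--Massey type comparison of cofiber and fiber sequences in a range. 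The paper does not reprove this; it cites the long exact sequence of the \emph{mapping cone} of an arbitrary map of $G$-symmetric spectra (\cite[Proposition 2.10.11]{Hausmann}) and then supplies the comparison needed to replace the cone by the quotient: the cone $C(i)$ commutes with the left adjoint $(-)(\Sp)$, the map $C(i)\to B/A$ is a level equivalence of $\Gamma$-$G$-spaces because $i$ is levelwise injective, and Lemma \ref{lem: diag} upgrades this to a $G$-level equivalence $C(i)(\Sp)\to (B/A)(\Sp)$, in particular a $\pi_*$-isomorphism. To repair your proof, replace your sketched colimit argument by this citation-plus-comparison (or by a citation of the corresponding statement for levelwise injective maps, if available); everything else you wrote then goes through.
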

\begin{proof}
Indeed, being a colimit, taking the cone of a map commutes with the left adjoint
$(-)(\Sp)$.
Moreover, the map $C(i)\rightarrow B/A$ is a level equivalence of
$\Gamma$-$G$-spaces
and so the map $C(i)(\Sp)\rightarrow B/A(\Sp)$
is a $G$-level equivalence (cf. Lemma \ref{lem: diag}),
in particular it is a $\pi_*$-isomorphism.
The result thus follows from the usual long exact sequence of homotopy groups
(cf.~\cite[Proposition 2.10.11]{Hausmann})
and the connectivity of the spectra obtained from $\Gamma$-$G$-spaces.
\end{proof}

\end{section}

\newpage
\begin{section}{The smash product of \texorpdfstring{$\Gamma$-$G$-spaces}{Gamma-G-spaces}}
 \label{sec: The smash product of Gamma G spaces}
\begin{subsection}{The smash product of $\Gamma$-$G$-spaces}
\label{subsec: The smash product of Gamma G spaces}
In~\cite{Lydakis}, Lydakis defined a smash product for $\Gamma$-spaces.
To begin with, we choose a smash product functor
$\Gamma\times\Gamma\rightarrow \Gamma$
(for example by identifying the usual smash product $m^+\wedge n^+$ with $(mn)^+$
via $(i, j)\mapsto (i-1)n + j$).
Given two $\Gamma$-spaces $F$ and $F'$ the $n$th level of the smash product $F\wedge F'$ is given by
\[
\xymatrix{
 (F\wedge F')(n^+) = \colim_{k^+\wedge l^+\rightarrow n^+} F(k^+)\wedge F(l^+).
}
\]
$F\wedge F'$ is characterized by the property that maps of $\Gamma$-spaces $F\wedge F'\rightarrow T$ correspond
bijectively to maps of $\Gamma\times\Gamma$-spaces $F(-)\wedge F'(-)\rightarrow T(-\wedge -)$.
Elements in the smash product are represented by triples $[f, x\wedge y]$, where
$f\colon k^+\wedge l^+\rightarrow n^+$ is a morphism in $\Gamma$ and $x\wedge y\in F(k^+)\wedge F(l^+)$.
We define the internal mapping object to be the $\Gamma$-space $Hom(F, F')(m^+):= Map_{\Gamma(\sset)}(F, F'(m^+\wedge -))$.
Recall from~\cite[Theorem 2.18.]{Lydakis} that with these definitions, the category of $\Gamma$-spaces is closed symmetric monoidal
with unit $\Gamma(1^+, -)$.

Consequently, the category of $\Gamma$-$G$-spaces is a closed symmetric monoidal
category with unit $\Gamma(1^+, -)$ by defining the smash product
of two $\Gamma$-$G$-spaces to be the smash product of the underlying $\Gamma$-spaces
endowed with the diagonal $G$-action and equipping the internal mapping object with the conjugation action.
\end{subsection}
\begin{subsection}{Smash product and cofibrations}
\label{subsec: Smash product and cofibrations}

We study the pushout product of two strict cofibrations (resp.
projective cofibrations).

\begin{Lemma}
\label{lem: smash product of cofibrant again cofibrant}
 If $F$ and $F'$ are strictly cofibrant (resp. projectively cofibrant), then so is $F\wedge F'$.
\end{Lemma}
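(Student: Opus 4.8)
The plan is to reduce the statement about preservation of cofibrancy under the smash product to the pushout-product axiom for the relevant model structures. The key observation is that an object $F$ is cofibrant precisely when the map $\emptyset \to F$ from the initial $\Gamma$-$G$-space is a (strict or projective) cofibration, so if I can establish the pushout-product property for the smash product of $\Gamma$-$G$-spaces, the lemma follows immediately by smashing together the two cofibrations $\emptyset \to F$ and $\emptyset \to F'$. Indeed, the pushout-product of $\emptyset \to F$ with $\emptyset \to F'$ is precisely the map $\emptyset \to F \wedge F'$, since all the corner terms $F \wedge \emptyset$, $\emptyset \wedge F'$, and $\emptyset \wedge \emptyset$ vanish (the smash product preserves initial objects, being a left adjoint in each variable). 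Thus the whole lemma is a special case of the assertion that the pushout-product of two cofibrations is again a cofibration.

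First I would handle the projective case, which is the more formal of the two. The projective model structure of Theorem \ref{thm: projective model structure} is cofibrantly generated with generating cofibrations $\Gamma I$, consisting of maps $i \wedge \Gamma_G(S^+, -)$ with $i \in I$ a generating $G$-cofibration. Since the smash product is a left adjoint in each variable, it commutes with colimits, so the pushout-product of two maps in $\Gamma I$ can be computed explicitly. The essential computation is that the representable functors multiply: $\Gamma_G(S^+, -) \wedge \Gamma_G(T^+, -) \cong \Gamma_G(S^+ \wedge T^+, -)$, which follows from the defining adjunction of the smash product together with the Yoneda lemma. This reduces the pushout-product of two generators to the pushout-product $i \Box j$ of two generating $G$-cofibrations smashed with a single representable $\Gamma_G(S^+ \wedge T^+, -)$; since $i \Box j$ is a $G$-cofibration by the pushout-product axiom for $G\sset$ (which holds because $G\sset$ is $G$-simplicial, hence monoidal), and smashing a $G$-cofibration with a representable yields a projective cofibration, we are done. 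The standard argument (e.g. via the identity $(\Gamma I)\Box(\Gamma I)\text{-cof} = \Gamma I\text{-cof}$ together with cellular induction) then upgrades this from generators to arbitrary projective cofibrations.

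For the strict case the argument is structurally similar but requires more care, since strict cofibrations are defined via the latching maps $i_n(f)$ being $\mathcal{G}_n$-cofibrations rather than through an explicit generating set that interacts so cleanly with the smash product. Here I would use the skeletal filtration implicit in Definition \ref{def: skn cskn}: a strict cofibration is built by successively attaching cells along the maps $i_n$, and the key point is to understand how the smash product interacts with the skeleton functors $\sk_n$. The main obstacle will be controlling the latching objects of $F \wedge F'$ in terms of those of $F$ and $F'$; concretely, one must check that the colimit defining $(F \wedge F')(n^+) = \colim_{k^+ \wedge l^+ \to n^+} F(k^+) \wedge F'(l^+)$ interacts with the skeletal filtration so that the relevant pushout-product of latching maps is a $\mathcal{G}_n$-cofibration. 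I expect this to rest on a $\Sigma_n$-equivariant analysis of how the isotropy of a smash $k^+ \wedge l^+ \to n^+$ sits inside $\mathcal{G}_n$ relative to the isotropy data in $\mathcal{G}_k$ and $\mathcal{G}_l$, and on the compatibility of the pushout-product axiom across the families $\mathcal{G}_k \times \mathcal{G}_l \to \mathcal{G}_{kl}$. Alternatively, since Proposition \ref{prop: quillen equivalence projective and stict model structure} establishes that the identity is a Quillen equivalence between the projective and strict structures and that the two classes of cofibrations are comparable, I would try to leverage the projective result together with the explicit description of strict cofibrations (a map is a strict cofibration iff it is a projective cofibration whose complementary simplices have the appropriate isotropy) to transfer the statement, thereby avoiding the full latching-object computation.
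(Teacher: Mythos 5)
Your projective-case argument is correct, though it takes a different route from the paper: you work with the generating cofibrations $\Gamma I$, use the isomorphism $\Gamma_G(S^+,-)\wedge\Gamma_G(T^+,-)\cong\Gamma_G(S^+\wedge T^+,-)$, and upgrade by cellular induction, whereas the paper argues by adjunction, showing that $Hom(F,-)$ preserves acyclic level fibrations when $F$ is projectively cofibrant (using that the projective structure is $G$-simplicial), so that lifting $\emptyset\to F\wedge F'$ against an acyclic level fibration reduces to lifting $\emptyset\to F'$ against $Hom(F,X)\to Hom(F,Y)$. Both are valid; yours additionally yields the pushout-product property for projective cofibrations along the way, which the paper instead deduces \emph{from} this lemma (Proposition \ref{prop: pushout product of two cofibrations} is proved via the cofiber criterion, using that the pushout-product map is injective with cofiber $F'/F\wedge\tilde F'/\tilde F$). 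Be aware that your reduction therefore inverts the paper's logical order: deriving the lemma from the pushout-product axiom is fine only if you prove that axiom independently, which you do in the projective case but not in the strict case.

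The strict case is where the genuine gap lies. Your primary route --- controlling the latching objects of $F\wedge F'$ by an equivariant analysis of how isotropy of maps $k^+\wedge l^+\to n^+$ sits inside $\mathcal{G}_n$ --- is exactly the hard content of the statement and is left as an expectation rather than carried out; non-equivariantly this is the substance of Lydakis's filtration results, and nothing in your sketch supplies the equivariant version. Your fallback route rests on a false characterization: it is \emph{not} true that a strict cofibration is a projective cofibration satisfying an isotropy condition on complementary simplices. The containment goes the other way (projective cofibrations form a strictly smaller class; e.g.\ the terminal $\Gamma$-$G$-space is strictly cofibrant but not projectively cofibrant), so there is no transfer of the projective result along these lines. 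The observation you are missing, which is the paper's entire proof of the strict case, is that a subgroup of $G\times\Sigma_n$ lies in $\mathcal{G}_n$ if and only if it meets $\{1\}\times\Sigma_n$ trivially, i.e.\ if and only if the underlying $\Sigma_n$-action on the relevant simplices is free; since skeleta and the maps $i_n(f)$ are computed on underlying spaces, a map of $\Gamma$-$G$-spaces is a strict cofibration precisely when its underlying map of $\Gamma$-spaces is a strict cofibration in the sense of Bousfield--Friedlander. The strict case then follows at once from the non-equivariant statement~\cite[Lemma 4.5]{Lydakis} applied to the underlying $\Gamma$-spaces, since the smash product of $\Gamma$-$G$-spaces has underlying $\Gamma$-space the smash of the underlying $\Gamma$-spaces.
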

\begin{proof}
A $\Gamma$-$G$-space is strictly cofibrant if and only if
its underlying $\Gamma$-space is strictly cofibrant in the sense of Bousfield and Friedlander~\cite{BF}.
So in the case of strict cofibrations, this follows from the non-equivariant case~\cite[Lemma 4.5]{Lydakis}.

In the case of projective cofibrations, this follows from the fact that $Hom(F, -)$ preserves level fibrations
which are level equivalences if $F$ is projectively cofibrant because the projective model structure is $G$-simplicial.
\end{proof}

\begin{Prop}
\label{prop: pushout product of two cofibrations}
 If $F\rightarrow F'$, $\tilde{F}\rightarrow \tilde{F}'$ are two strict cofibrations
 (resp. projective cofibrations)
 of $\Gamma$-$G$-spaces, then the pushout product map
 \[
 \xymatrix{
  F\wedge \tilde{F}'\cup_{F\wedge \tilde{F}} F'\wedge \tilde{F}\ar[r]& F'\wedge \tilde{F}'
}
  \]
is a strict cofibration (resp. projective cofibration).
\end{Prop}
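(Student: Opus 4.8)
The plan is to handle the two cases separately, reducing each to a pushout product statement one level down: to based $G$-spaces for the projective structure, and to underlying non-equivariant $\Gamma$-spaces for the strict structure.

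For the projective case I would first invoke the standard fact that it suffices to verify the pushout product property on generating cofibrations: the class of maps $f$ for which $f\Box g$ is a projective cofibration for every projective cofibration $g$ is saturated (closed under pushout, transfinite composition and retract), and $-\Box g$ preserves colimits because the smash product is closed symmetric monoidal. Hence it is enough to treat $f = i\wedge\Gamma_G(S^+, -)$ and $g = j\wedge\Gamma_G(T^+, -)$ with $i, j\in I$ and $S^+, T^+$ based finite $G$-sets. The key computation is that representables smash to representables, $\Gamma_G(S^+, -)\wedge\Gamma_G(T^+, -)\cong\Gamma_G(S^+\wedge T^+, -)$ (the smash product being the Day convolution along $\wedge\colon\Gamma\times\Gamma\to\Gamma$), together with the compatibility $(K\wedge\Gamma_G(S^+, -))\wedge(L\wedge\Gamma_G(T^+, -))\cong(K\wedge L)\wedge\Gamma_G(S^+\wedge T^+, -)$ of the tensoring with the monoidal structure. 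Writing $F_S := (-)\wedge\Gamma_G(S^+, -)$, which is left Quillen for the projective model structure (its right adjoint, evaluation at $S^+$, preserves level fibrations and level equivalences by definition of the level model structure), the pushout product of the two generators becomes $F_{S\wedge T}(i\Box j)$, the image under $F_{S\wedge T}$ of the pushout product $i\Box j$ formed in based $G$-spaces. By the pushout product axiom valid in the $G$-simplicial category $G\sset$, the map $i\Box j$ is a $G$-cofibration, and applying the left Quillen functor $F_{S\wedge T}$ yields a projective cofibration.

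For the strict case I would use that the forgetful functor $U\colon\Gamma(G\sset)\to\Gamma(\sset)$ to underlying $\Gamma$-spaces detects strict cofibrations. Indeed, a map $f$ is a strict cofibration exactly when each $i_n(f)$ is a $\mathcal{G}_n$-cofibration, which by the characterization of $\mathcal{F}$-cofibrations means $i_n(f)$ is injective and all simplices outside its image have isotropy in $\mathcal{G}_n$; the condition $J\cap\Sigma_n = 1$ on the stabilizer $J\leqslant G\times\Sigma_n$ is precisely freeness of the $\Sigma_n$-action, so this is exactly the Bousfield--Friedlander strict cofibration condition on $U(f)$, since $U(i_n(f)) = i_n(U(f))$ and the $\Sigma_n$-action is untouched by $U$. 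Because $U$ preserves colimits and sends the smash product of $\Gamma$-$G$-spaces to the smash product of underlying $\Gamma$-spaces, it commutes with forming pushout products, so $U$ applied to the pushout product in question is the pushout product of $U(f)$ and $U(g)$. The non-equivariant pushout product result of Lydakis (by the argument of \cite[Lemma 4.5]{Lydakis}) then shows this is a Bousfield--Friedlander strict cofibration, and the detection criterion lifts the conclusion back to $\Gamma$-$G$-spaces.

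I expect the main obstacle to be the bookkeeping needed to establish the representable smash formula and the tensoring compatibility used in the projective case, and, for the strict case, the careful matching of the $\mathcal{G}_n$-isotropy condition with $\Sigma_n$-freeness so that the underlying-$\Gamma$-space characterization of strict cofibrations is correct. Once these identifications are in place, both cases are formal.
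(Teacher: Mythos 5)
Your proof is correct, but it takes a genuinely different route from the paper's. The paper disposes of both cases at once with a two-line argument: the pushout product map is injective by \cite[Proposition 4.4]{Lydakis}, its cofiber is isomorphic to $F'/F\wedge \tilde{F}'/\tilde{F}$, which is cofibrant by Lemma \ref{lem: smash product of cofibrant again cofibrant}, and one concludes from the characterization of strict (resp. projective) cofibrations as the injective maps with strictly (resp. projectively) cofibrant cofiber (for the projective case \cite[Lemma A3]{sthomalg}). Your projective-case argument---reduction to the generating cofibrations $\Gamma I$, the Day-convolution identity $\Gamma_G(S^+,-)\wedge \Gamma_G(T^+,-)\cong \Gamma_G(S^+\wedge T^+,-)$, and the left Quillen functor $(-)\wedge \Gamma_G(S^+\wedge T^+,-)$---is a valid alternative that avoids the cofiber characterization entirely and yields the useful explicit formula for pushout products of generators; note, though, that this strategy is unavailable for the strict case, since the paper never establishes cofibrant generation of the strict model structure (Theorem \ref{thm: strict model structure} asserts no generating sets), and you rightly switch methods there. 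Your strict-case reduction to $G=1$ via the forgetful functor is sound: the dictionary between isotropy in $\mathcal{G}_n$ and freeness of the $\Sigma_n$-action, together with the fact that the forgetful functor commutes with skeleta, pushouts and smash products, does show that strict cofibrations and their pushout products are detected on underlying $\Gamma$-spaces (this is the map-level version of the observation the paper itself makes in proving Lemma \ref{lem: smash product of cofibrant again cofibrant}). The one weak point is the citation at the end: \cite[Lemma 4.5]{Lydakis} is the statement that a smash product of cofibrant $\Gamma$-spaces is cofibrant, not a pushout-product statement, so your non-equivariant input needs either a precise reference or a proof---and the shortest proof is exactly the paper's argument specialized to the trivial group (injectivity via \cite[Proposition 4.4]{Lydakis} plus the injective-with-cofibrant-cofiber characterization of Bousfield--Friedlander cofibrations). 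This is a citation-level gap rather than a mathematical one; filling it makes your proof complete.
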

\begin{proof}
The pushout product map is injective by~\cite[Proposition 4.4]{Lydakis}
and has cofiber isomorphic to $F'/F\wedge \tilde{F}'/\tilde{F}$
which is strictly cofibrant (resp. projectively cofibrant) by the previous lemma.
This implies the claim, since a map is a strict cofibration (resp. projective cofibration) if and only if
it is injective and its cofiber is strictly cofibrant (resp. projectively cofibrant)
(In the case of projective cofibrations cf.~\cite[Lemma A3]{sthomalg}. The \emph{free} $\Gamma$-$G$-spaces are those of the form
$\bigvee_i G^+\wedge_{H_i} \Gamma_{H_i}(S_i^+, -)$ defined below.).
\end{proof}
\end{subsection}

\begin{subsection}{Smash product and level equivalences}
\label{subsec: Smash product and level equivalences}
We show that smashing with a strictly cofibrant $\Gamma$-$G$-space preserves
level equivalences.

\begin{Prop}
\label{prop: filtration gamma space}
 For any $\Gamma$-$G$-space $F$ and any positive integer $m$,
 there is a pushout square of $\Gamma$-$G$-spaces
 \[
 \xymatrix{
  \partial(\Gamma(m^+, -)\wedge_{\Sigma_m} F(m^+)) \ar[r]\ar[d] & \Gamma(m^+, -)\wedge_{\Sigma_m} F(m^+)\ar[d]\\
  (\sk_{m-1}F) \ar[r] & (\sk_{m}F),
}
  \]
where
{\tiny
\begin{gather*}
\partial(\Gamma(m^+, -)\wedge_{\Sigma_m} F(m^+))=\\
                                       \Gamma(m^+, -)\wedge_{\Sigma_m} (\sk_{m-1}F)(m^+)\cup_{(\sk_{m-1}\Gamma(m^+,-))\wedge_{\Sigma_m} (\sk_{m-1}F)(m^+)} (\sk_{m-1} \Gamma(m^+, -))\wedge_{\Sigma_m} F(m^+).
\end{gather*}
}
\end{Prop}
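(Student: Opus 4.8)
The plan is to verify the square one level at a time. Since colimits of $\Gamma$-$G$-spaces are formed level-wise and the $G$-action in each of the four corners is the evident level-wise (diagonal, resp. conjugation-free) one, it suffices to fix an object $j^+\in\Gamma$ and show that the square of $G$-spaces obtained by evaluating at $j^+$ is a pushout; naturality in $j^+$ and $G$-equivariance of all structure maps are then automatic. So I fix $j^+$ and analyse the colimit $(\sk_m F)(j^+)=\colim_{k^+\to j^+,\ k\leq m} F(k^+)$, whose indexing category is the comma category of pairs $(k^+,\phi)$ with $\phi\colon k^+\to j^+$ and $k\leq m$, and a morphism $(k^+,\phi)\to(k'^+,\phi')$ a map $g\colon k^+\to k'^+$ with $\phi'g=\phi$.

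The key input is the surjection–injection factorization in $\Gamma$. Given $a\in F(m^+)$, I would track how a cell $[\phi,a]$ with $\phi\colon m^+\to j^+$ becomes identified in the colimit. If $\phi$ is not injective it factors as $m^+\overset{\pi}{\twoheadrightarrow} i^+\overset{\iota}{\hookrightarrow} j^+$ with $i\leq m-1$, and $\pi$ is a morphism $(m^+,\phi)\to(i^+,\iota)$ of the indexing category, so $[\phi,a]=[\iota,F(\pi)(a)]$ already lies in $(\sk_{m-1}F)(j^+)$. Similarly, if $a$ lies in the latching object $(\sk_{m-1}F)(m^+)$, say $a=F(\tau)(b)$ for $\tau\colon k^+\to m^+$ with $k\leq m-1$, then $\tau$ is a morphism $(k^+,\phi\tau)\to(m^+,\phi)$ and $[\phi,a]=[\phi\tau,b]$ is again in $(\sk_{m-1}F)(j^+)$. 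Thus the only genuinely new cells appearing in the passage $\sk_{m-1}\to\sk_m$ are those indexed by an injection $m^+\hookrightarrow j^+$ together with an element of $F(m^+)$ outside the latching object; an injection $m^+\hookrightarrow j^+$ is precisely an element of $\Gamma(m^+,j^+)$ not lying in $(\sk_{m-1}\Gamma(m^+,-))(j^+)$, the latter consisting exactly of the non-injective maps (those factoring through a set of size $\leq m-1$).

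Next I would organise these new cells by the $\Sigma_m=\mathrm{Aut}_\Gamma(m^+)$ action. In the colimit the morphism $\sigma\in\Sigma_m$, viewed as $(m^+,\phi\sigma)\to(m^+,\phi)$, forces the relation $[\phi\sigma,a]\sim[\phi,F(\sigma)(a)]$, which is precisely the identification imposed by the quotient $\wedge_{\Sigma_m}$; since $\Sigma_m$ acts freely on the set of injections $m^+\hookrightarrow j^+$, this matches the top-right corner $(\Gamma(m^+,-)\wedge_{\Sigma_m}F(m^+))(j^+)$ exactly. The two degenerate loci found above — cells with non-injective $\Gamma$-component (lying in $\sk_{m-1}\Gamma(m^+,-)$) and cells with $F$-component in the latching object $(\sk_{m-1}F)(m^+)$ — are exactly the two summands glued together in the stated boundary object $\partial(\Gamma(m^+,-)\wedge_{\Sigma_m}F(m^+))$, intersected along $(\sk_{m-1}\Gamma(m^+,-))\wedge_{\Sigma_m}(\sk_{m-1}F)(m^+)$. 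I would therefore identify $\partial(\cdots)(j^+)\to(\Gamma(m^+,-)\wedge_{\Sigma_m}F(m^+))(j^+)$ as the inclusion of precisely the cells that are already present in $(\sk_{m-1}F)(j^+)$, and conclude co-cartesianness of the level-$j^+$ square by observing that the right vertical map restricts to a bijection from the complementary (new, injective-indexed, non-latching) cells onto the complement of $(\sk_{m-1}F)(j^+)$ inside $(\sk_m F)(j^+)$. This is the equivariant analogue of the non-equivariant filtration of Lydakis~\cite{Lydakis}, carried out with all maps $G$-equivariant.

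The main obstacle will be the bookkeeping of the third step: showing that the pushout-product boundary $\partial(\cdots)$ neither over- nor under-counts the cells of $\Gamma(m^+,-)\wedge_{\Sigma_m}F(m^+)$ that already sit in $\sk_{m-1}F$. Concretely, one must check that a cell is degenerate (i.e.\ lies in $\sk_{m-1}F$) if and only if its $\Gamma$-component is non-injective \emph{or} its $F(m^+)$-component comes from the latching object, and that the overlap of these two conditions is accounted for by the middle term $(\sk_{m-1}\Gamma(m^+,-))\wedge_{\Sigma_m}(\sk_{m-1}F)(m^+)$ — this is where the freeness of the $\Sigma_m$-action on injections and the surjection–injection factorization must be combined carefully. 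Everything else, namely the level-wise reduction, the $\Sigma_m$-quotient identification, and naturality, is formal.
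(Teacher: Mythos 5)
Your first paragraph --- reduce to a fixed level $j^+$ and to underlying non-equivariant $\Gamma$-spaces, since colimits are computed levelwise and equivariance of all comparison maps is automatic --- is exactly the reduction the paper makes: its entire proof consists of this observation together with a citation of the non-equivariant statement, which is \cite[Theorem 3.10]{Lydakis}. Everything you propose after that point is therefore an attempt to reprove Lydakis's theorem by hand, and this is where your proposal has a genuine gap: the decisive step is announced but never carried out. You flag it yourself (``the main obstacle will be the bookkeeping of the third step''), but that bookkeeping \emph{is} the theorem. Concretely, since the indexing category of $(\sk_m F)(j^+)=\colim_{k^+\to j^+,\ k\le m}F(k^+)$ is not filtered, equality of classes is witnessed by arbitrary zigzags; the assertions that a class $[\phi,a]$ with $\phi$ injective and $a$ outside the image of the latching map never coincides with a class coming from $\sk_{m-1}F$, and that two such classes $[\phi,a]$, $[\phi',a']$ coincide only via the relation $[\phi\sigma,a]\sim[\phi,F(\sigma)(a)]$, are precisely the ``no over-counting'' statements that need proof. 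Your sketch establishes only the easy inclusion, namely that every cell with non-injective $\Gamma$-component or with latching $F$-component is degenerate.

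There is also a point where the strategy, as formulated, does not yet make sense for arbitrary $F$: you systematically treat $(\sk_{m-1}F)(m^+)$, $(\sk_{m-1}\Gamma(m^+,-))(j^+)$ and $\partial(\Gamma(m^+,-)\wedge_{\Sigma_m}F(m^+))$ as subobjects (writing $a=F(\tau)(b)$ for an element of the latching object, speaking of cells ``already present''), whereas $\partial(\cdots)$ is by definition a pushout, not a priori a union of subobjects of $\Gamma(m^+,-)\wedge_{\Sigma_m}F(m^+)$. For a complement-counting argument to prove that a square of simplicial sets is a pushout one needs, in addition, that the latching map $(\sk_{m-1}F)(m^+)\to F(m^+)$, the map $\partial(\cdots)\to\Gamma(m^+,-)\wedge_{\Sigma_m}F(m^+)$ and the map $\sk_{m-1}F\to\sk_mF$ are levelwise injective; none of this is formal for a general $\Gamma$-$G$-space $F$, and it is part of what the combinatorial analysis in \cite{Lydakis} delivers. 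The economical repair is the paper's own proof: quote \cite[Theorem 3.10]{Lydakis} for the underlying $\Gamma$-spaces and conclude with your (and the paper's) observation that the forgetful functor detects pushouts; if you want a self-contained argument instead, the zigzag and injectivity points above are what must actually be written out.
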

\begin{proof}
This follow from the nonequivariant case~\cite[Theorem 3.10]{Lydakis}, because the forgetful functor from $G$-spaces to spaces detects pushouts. 
\end{proof}

Next we prove an equivariant analogon of~\cite[Proposition 3.11]{Lydakis}.
Given a subgroup $H\leqslant G$ and a based finite $H$-set $S^+$
we will encounter $\Gamma$-$G$-spaces of the form $G^+\wedge_H \Gamma_H(S^+, -)$.
Here in the quotient we identify $gh\wedge \phi$ and $g\wedge \phi(h^{-1})$. 
The group $G$ acts on the left smash factor.

\begin{Prop}
\label{prop: discrete gamma set}
For any strictly cofibrant $\Gamma$-$G$-set $F$ and any $n\geq 0$
there exists a pushout diagram
\[
 \xymatrix{
\bigvee_{i\in I} G^+\wedge_{H_i}(\sk_{n-1}\Gamma_{H_i}(S_i^+, -))\ar[r]\ar[d] & \bigvee_{i\in I} G^+\wedge_{H_i}\Gamma_{H_i}(S_i^+, -)\ar[d]\\
 (\sk_{n-1}F) \ar[r] & (\sk_nF)
 }
\]
where $I$ is a set and for $i\in I$, $H_i\leqslant G$ is a subgroup and $S_i^+$ is based finite $H_i$-set.

\end{Prop}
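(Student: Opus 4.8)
The plan is to build the asserted pushout by feeding the general skeletal pushout of Proposition \ref{prop: filtration gamma space} through an orbit decomposition that strict cofibrancy of $F$ makes available. Throughout, abbreviate $R := \Gamma(n^+, -)$ and $\partial R := \sk_{n-1}R$, both regarded as $\Gamma$-spaces carrying the $\Sigma_n$-action by precomposition; recall that $\partial R$ is the sub-$\Gamma$-space of $R$ consisting levelwise of those maps out of $n^+$ which factor through some $k^+$ with $k\leq n-1$. Write $B := F(n^+)$ and $\partial B := (\sk_{n-1}F)(n^+)$, which are honest based $G\times\Sigma_n$-sets because $F$ takes values in $G$-sets. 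With this notation Proposition \ref{prop: filtration gamma space} exhibits $\sk_n F$ as the pushout of $\sk_{n-1}F$ along the map $\partial(R\wedge_{\Sigma_n}B)\to R\wedge_{\Sigma_n}B$, where $\partial(R\wedge_{\Sigma_n}B) = R\wedge_{\Sigma_n}\partial B\cup_{\partial R\wedge_{\Sigma_n}\partial B}\partial R\wedge_{\Sigma_n}B$.

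First I would exploit strict cofibrancy. By definition $F$ is strictly cofibrant exactly when each $\partial B\to B$ is a $\mathcal{G}_n$-cofibration of $G\times\Sigma_n$-sets, and by the description of cofibrations in the $\mathcal{F}$-model structure recalled in Section \ref{subsec: Model structures on G spaces} this means that $\partial B\to B$ is injective and that every element of $B$ outside the image has isotropy in $\mathcal{G}_n$. Hence, as based $G\times\Sigma_n$-sets, $B\cong\partial B\vee N$, where the complement $N$ is a wedge of orbits $((G\times\Sigma_n)/J_i)^+$ indexed by a set $I$ with every $J_i\in\mathcal{G}_n$; since the members of $\mathcal{G}_n$ are precisely the graph subgroups, I may write $J_i=\{(h,\rho_i(h)) : h\in H_i\}$ for a subgroup $H_i\leqslant G$ and a homomorphism $\rho_i\colon H_i\to\Sigma_n$. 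Because $\partial B\to B$ is a summand inclusion, the functor $R\wedge_{\Sigma_n}(-)$ (which preserves wedges in its argument) collapses the boundary pushout: one computes $\partial(R\wedge_{\Sigma_n}B)\cong R\wedge_{\Sigma_n}\partial B\vee\partial R\wedge_{\Sigma_n}N$ and $R\wedge_{\Sigma_n}B\cong R\wedge_{\Sigma_n}\partial B\vee R\wedge_{\Sigma_n}N$, with the map being the identity on the common $R\wedge_{\Sigma_n}\partial B$ summand. Cancelling that summand in the pushout reduces the attaching of Proposition \ref{prop: filtration gamma space} to the pushout of $\sk_{n-1}F$ along the wedge over $i\in I$ of $\partial R\wedge_{\Sigma_n}((G\times\Sigma_n)/J_i)^+\to R\wedge_{\Sigma_n}((G\times\Sigma_n)/J_i)^+$.

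It then remains to identify each cell. Setting $S_i^+ := n^+$ with the $H_i$-action obtained by restricting the $\Sigma_n$-action along $\rho_i$, the key computation is a natural isomorphism of $\Gamma$-$G$-spaces
\[
\Gamma(n^+, -)\wedge_{\Sigma_n}\big((G\times\Sigma_n)/J_i\big)^+\ \cong\ G^+\wedge_{H_i}\Gamma_{H_i}(S_i^+, -),
\]
checked levelwise: normalizing the $\Sigma_n$-coordinate of a class $[\phi,[g,\sigma]]$ to $[\phi\sigma,[g,1]]$ and sending it to $[g,\phi\sigma]$ gives a well-defined bijection once the defining relation $[g,1]=[gh,\rho_i(h)]$ of the orbit is matched against the induction relation $[gh,\psi]=[g,\psi\rho_i(h)^{-1}]$. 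Applying the functor $\sk_{n-1}$ to this isomorphism, and using that $\sk_{n-1}$ commutes with the induction $G^+\wedge_{H_i}(-)$ and with $(-)\wedge_{\Sigma_n}W^+$ for discrete $W$ (all being levelwise colimits), identifies the attaching map $\partial R\wedge_{\Sigma_n}((G\times\Sigma_n)/J_i)^+$ with $G^+\wedge_{H_i}\sk_{n-1}\Gamma_{H_i}(S_i^+, -)$. Assembling over $i\in I$ turns the reduced pushout of the previous paragraph into the asserted one.

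The hard part will be this last cell identification: keeping the left $G$-action, the right $\Sigma_n$-action used to form $\wedge_{\Sigma_n}$, and the twisting homomorphism $\rho_i$ mutually consistent across the two quotient constructions, and then verifying that the isomorphism is natural in the $\Gamma$-variable so that it is compatible with the skeletal filtration. Everything else — the splitting $B\cong\partial B\vee N$ and the reduction to a cellwise statement — is formal once the graph-subgroup description of $\mathcal{G}_n$ and the summand-inclusion property of $\partial B\to B$ are in hand.
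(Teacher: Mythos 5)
Your proof is correct and follows essentially the same route as the paper: both rest on Proposition \ref{prop: filtration gamma space}, the orbit decomposition into graph-subgroup orbits that strict cofibrancy provides at level $n^+$, and the same cell isomorphism $\Gamma(n^+,-)\wedge_{\Sigma_n}\bigl((G\times\Sigma_n)/J_i\bigr)^+\cong G^+\wedge_{H_i}\Gamma_{H_i}(S_i^+,-)$. The only difference is bookkeeping: where you split $F(n^+)\cong(\sk_{n-1}F)(n^+)\vee N$ and cancel the common wedge summand formally, the paper verifies the corresponding intermediate square is a pushout by a direct element-level check (images intersect the boundary correctly and every element lifts to one of the two corners); both arguments are sound.
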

\begin{proof}
There are elements $s_i\in F(n^+)$ and subgroups
$\Gamma_i = \{(h, \rho_i(h))|\ H_i\leqslant G,\ \rho_i\colon H_i\rightarrow \Sigma_n\ \text{homomorphism}\}$,
such that
\[
(F/(\sk_{n-1}F))(n^+) \cong \bigvee_{i\in I} (G\times \Sigma_n/\Gamma_i)\cdot s_i.
\]
For each $i$, we have the following isomorphism of $\Gamma$-$G$-spaces
\[
\resizebox{12,5cm}{!}{
\xymatrix{
 \Gamma(n^+, -)\wedge_{\Sigma_n} (G\times \Sigma_n/\Gamma_i)^+\ar[r] & G^+\wedge_{H_i} \Gamma_{H_i}(S_i^+, -),\ [f\wedge (g, \sigma)\Gamma_i]\mapsto [g\wedge (f\sigma)],
}
}
\]
where on the right $S_i^+$ denotes the set $\{1, \ldots, n\}$ equipped with the $H_i$-action coming from $\rho_i$.
So the $s_i$ give rise to a map
\[
\xymatrix{
\bigvee_{i\in I} G^+\wedge_{H_i}\Gamma_{H_i}(S_i^+, -)\ar[r] & \Gamma(n^+, -)\wedge_{\Sigma_n} F(n^+).
}
 \]

The image of this map intersects $\partial(\Gamma(n^+, -)\wedge_{\Sigma_n} F(n^+))$ precisely
in $\bigvee_{i\in I} G^+\wedge_{H_i}(\sk_{n-1}\Gamma_{H_i}(S_i^+, -))$
and any element can be lifted either to $\partial(\Gamma(n^+, -)\wedge_{\Sigma_n} F(n^+))$ or to $\bigvee_{i\in I} G^+\wedge_{H_i} \Gamma_{H_i}(S_i^+, -)$.
It follows that we have a pushout square
\[
\xymatrix{
\bigvee_{i\in I} G^+\wedge_{H_i}(\sk_{n-1}\Gamma_{H_i}(S_i^+, -))\ar[r]\ar[d] & \bigvee_{i\in I} G^+\wedge_{H_i}\Gamma_{H_i}(S_i^+, -)\ar[d]\\
\partial(\Gamma(n^+, -)\wedge_{\Sigma_n} F(n^+)) \ar[r] & (\Gamma(n^+, -)\wedge_{\Sigma_n} F(n^+)).
}
\]
Together with Proposition \ref{prop: filtration gamma space}, this proves the result.
\end{proof}

\begin{Prop}
\label{prop: smashing with strictly cofibrant preserves level equivalences}
 Smashing with a strictly cofibrant $\Gamma$-$G$-space
 preserves level equivalences.
\end{Prop}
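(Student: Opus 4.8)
The plan is to reduce the statement to a computation on the free $\Gamma$-$G$-spaces $G^+\wedge_H\Gamma_H(S^+,-)$ by exploiting the skeletal filtration provided by Proposition~\ref{prop: discrete gamma set}. Let $F$ be strictly cofibrant and let $g\colon X\rightarrow Y$ be a level equivalence of $\Gamma$-$G$-spaces. I want to show that $F\wedge g\colon F\wedge X\rightarrow F\wedge Y$ is again a level equivalence. Since the smash product commutes with colimits in each variable and the skeleta $\sk_nF$ assemble into a colimit $F=\colim_n\sk_nF$ along levelwise cofibrations, by the patching lemma (Reedy, cf.~\cite[3.8]{BF}) and the stability of $G$-equivalences under filtered colimits along cofibrations (as in Lemma~\ref{lem: diag}), it suffices to treat each stage $\sk_{n-1}F\rightarrow\sk_nF$ separately. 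Smashing the pushout square of Proposition~\ref{prop: discrete gamma set} with $g$ produces a map of pushout squares, so the inductive step reduces to the case of the free pieces $G^+\wedge_{H_i}\Gamma_{H_i}(S_i^+,-)$.

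\emph{Next I would} carry out the base case, namely proving that smashing with $G^+\wedge_H\Gamma_H(S^+,-)$ preserves level equivalences. Here I expect to use the adjunction between the induction functor $G^+\wedge_H(-)$ and restriction together with the non-equivariant computation of Lydakis. Concretely, the smash product $\bigl(G^+\wedge_H\Gamma_H(S^+,-)\bigr)\wedge X$ should be identifiable, by the defining colimit formula for $\wedge$ and the representing property of $\Gamma_H(S^+,-)$, with an induced $\Gamma$-$G$-space built from the value of $X$ shifted by $S^+$; roughly $G^+\wedge_H\bigl(X(S^+\wedge-)\bigr)$ after unwinding the Yoneda-type reduction $\Gamma(m^+,-)\wedge X\cong X(m^+\wedge-)$. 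Since induction $G^+\wedge_H(-)$ preserves $H$-equivalences level\-wise (it is a wedge of translates), and $g$ being a level equivalence means $g(S^+\wedge T^+)$ is a $G$-equivalence for every based finite $G$-set $T^+$, this reduces the base case to Lydakis's non-equivariant result~\cite[Proposition 3.11]{Lydakis} applied on underlying spaces and fixed points.

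\textbf{The main obstacle} I anticipate is the interaction between the quotient by $\Sigma_m$ appearing in Proposition~\ref{prop: filtration gamma space} and the requirement that $g$ induce \emph{equivariant} weak equivalences after smashing. In the non-equivariant setting one smashes $\Gamma(m^+,-)\wedge_{\Sigma_m}F(m^+)$ with $X$ and invokes that $F(m^+)$ is $\Sigma_m$-free on the non-degenerate part; equivariantly the relevant isotropy is controlled by the subgroups $\Gamma_i=\{(h,\rho_i(h))\}$, which are precisely the $\mathcal{G}_n$-subgroups. I would need to verify that after smashing the free-ness of the $\Sigma_n$-action on the cells of $F(n^+)\smallsetminus(\sk_{n-1}F)(n^+)$ guarantees that taking $\Sigma_n$-orbits is homotopically well-behaved, i.e.\ that $\Gamma(n^+,-)\wedge_{\Sigma_n}(-)$ sends the $\Sigma_n$-equivariant equivalence $F(n^+)\wedge g(n^+\wedge-)$ to a level equivalence. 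This is where the strict cofibrancy of $F$ is essential, and it is the step where I would have to check that the quotient interacts correctly with fixed points for \emph{all} $H\leqslant G$.

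\emph{Finally I would} assemble the induction: assuming smashing with $\sk_{n-1}F$ preserves level equivalences, the map-of-pushout-squares argument shows the same for $\sk_nF$, since the two top corners (the free pieces and their boundary $\partial$) preserve level equivalences by the base case and the inductive hypothesis, and level equivalences are closed under the relevant pushouts along levelwise cofibrations by properness of the strict model structure (Theorem~\ref{thm: strict model structure}). Passing to the colimit over $n$ then yields the result for $F$ itself. I expect the formal bookkeeping to be routine once the base case and the $\Sigma_n$-orbit step are settled; the genuine content lies entirely in those two points.
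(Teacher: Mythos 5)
Your overall strategy --- skeletal induction via Proposition \ref{prop: discrete gamma set}, reduction to the free pieces $G^+\wedge_{H}\Gamma_{H}(S^+,-)$, and a fixed-point analysis of induction --- is the same as the paper's, but two of your steps have genuine gaps. The first concerns discreteness: Proposition \ref{prop: discrete gamma set} is stated and proved only for strictly cofibrant $\Gamma$-$G$-\emph{sets}; its proof uses that $(F/\sk_{n-1}F)(n^+)$ is a wedge of discrete orbits $(G\times\Sigma_n/\Gamma_i)^+$. You apply it directly to an arbitrary strictly cofibrant $\Gamma$-$G$-space, which is not legitimate. The paper's first move is precisely the missing reduction: $F\wedge g$ is the diagonal of the map of bisimplicial $\Gamma$-$G$-sets $(F_n\wedge A)_m\rightarrow (F_n\wedge B)_m$, so by the diagonal argument (as in Lemma \ref{lem: diag}) one may assume $F$ is a strictly cofibrant $\Gamma$-$G$-\emph{set}. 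This reduction is also exactly what dissolves the ``main obstacle'' you flag but leave unresolved: once $F$ is discrete, the attaching data of the skeletal filtration is literally a wedge of free pieces, so no homotopical control of $\Sigma_n$-orbits of simplicial objects is ever needed --- the $\Sigma_n$-quotient has already been taken at the level of sets inside Proposition \ref{prop: discrete gamma set}.

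The second gap is that your base-case identification $\Gamma(m^+,-)\wedge X\cong X(m^+\wedge -)$ is false. Smashing with a representable is evaluation at the \emph{mapping set}, not at the smash: $\Gamma(m^+,-)\wedge X\cong X(\Gamma(m^+,-))$. One can already see this on representables: $\Gamma(2^+,-)\wedge\Gamma(1^+,-)\cong\Gamma(2^+,-)$, whose value at $T^+$ is $\Gamma(2^+,T^+)\cong T^+\times T^+$, whereas your formula would give $\Gamma(1^+,2^+\wedge T^+)=2^+\wedge T^+$; the cardinalities differ. The paper establishes the equivariant version $(G^+\wedge_H\Gamma_H(S^+,-)\wedge X)(T^+)\cong G^+\wedge_H X(\Gamma_H(S^+,T^+))$, with an explicitly described $G$-action, and then concludes by the fixed-point decomposition $(G^+\wedge_H X(\Gamma_H(S^+,T^+)))^K\cong\bigvee_i X(\Gamma_H(S^+,T^+))^{K^{g_i}}$, the wedge running over representatives $g_i$ of $(G/H)^K$; the claim follows because a level equivalence induces $K^{g_i}$-equivalences at the finite based sets $\Gamma_H(S^+,T^+)$ (level equivalences coincide with strict equivalences, by the Remark in Section \ref{subsubsec: The strict model structure}). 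Your induction and fixed-point reasoning survives once the formula is corrected, but as written the central computation of the base case is wrong.
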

\begin{proof}
Consider a strictly cofibrant $\Gamma$-$G$-space $F$ and a level equivalence $f\colon A\rightarrow B$.
The map $F\wedge A\rightarrow F\wedge B$ is the diagonal of the map bisimplicial $\Gamma$-$G$-sets
$(F_n\wedge A)_m\rightarrow (F_n\wedge B)_m$.
Here the subscript denotes the simplicial degree.
So we may assume that $F$ a strictly cofibrant $\Gamma$-$G$-set.
Suppose for a moment that, for any $H\leqslant G$ and any based $H$-set $S^+$, smashing with $G^+\wedge_H \Gamma_H(S^+, -)$ preserves level equivalences.
In view of Proposition \ref{prop: discrete gamma set} it follows inductively that $(\sk_n F)\wedge f$ is a level equivalence for all $n\geq0$.
Then $F\wedge f$ is a level equivalence, because homotopy groups commute with filtered colimits along $G$-cofibrations
and the maps $(\sk_n F)\wedge X\rightarrow (\sk_{n+1} F)\wedge X$ are strict cofibrations by Proposition \ref{prop: pushout product of two cofibrations}.

We now prove that smashing with $G^+\wedge_H \Gamma_H(S^+, -)$ preserves level equivalences.
Let $X$ be an arbitrary $\Gamma$-$G$-space and let $T^+$ be any based finite $G$-set. Then we have an isomorphism (natural in $T^+$)
\[
\xymatrix{
(G^+\wedge_H\Gamma_G(S^+, -)\wedge X)(T^+) \ar[r] & G^+\wedge_H X(\Gamma_H(S^+, T^+))
}
\]
given by mapping a tuple $[f, [g\wedge \phi]\wedge x]$ consisting of $f\colon k^+\wedge l^+\rightarrow T^+$, $\phi\colon S^+\rightarrow k^+$ and $x\in X(l^+)$ to
$[g\wedge X(\widetilde{f\circ (\phi\wedge l^+)})(x)]$, where $\widetilde{f\circ (\phi\wedge l^+)}: l^+ \rightarrow \Gamma_H(S^+, T^+)$ is the adjoint
of the composition $S^+\wedge l^+\rightarrow k^+\wedge l^+ \rightarrow T^+$ indicated.
$G$ acts from the left by
\[
 g'\cdot [g\wedge x] = [g'g\wedge X(\Gamma_H(S^+, g'))(x)],
\]
where $\Gamma_H(S^+, g')$ denotes postcomposition of maps with the action of $G$ on $T^+$.

Now let $K\leqslant G$ be a subgroup. We choose a set of representatives $\{g_i\}$ of $(G/H)^K = \{gH\colon K^g\leqslant H\}$.
Then
\[
(G^+\wedge_H X(\Gamma_H(S^+, T^+)))^K = \bigvee_{i} X(\Gamma_H(S^+, T^+))^{K^{g_i}}.
\]
This implies the claim.
\end{proof}
\end{subsection}

\begin{subsection}{The functor $(-)(\Sp)$ and smash products}
\label{subsec: The functor Sp otimes and smash products}
Recall that the category of $G$-symmetric spectra is a symmetric monoidal category
under the smash product with unit $\Sp$. The main result of this section is
Theorem \ref{thm: assembly} below, which states that $(-)(\Sp)$ takes smash
products to smash products up to $\pi_*$-isomorphism at least when one of the factors is strictly cofibrant.

The functor $(-)(\Sp)\colon \Gamma(G\sset)\rightarrow GSp^\Sigma$ is lax symmetric monoidal.
Indeed, given two $\Gamma$-$G$-spaces $F$ and $F'$, the natural maps
\[
\xymatrix{
 F(n^+)\wedge F'(m^+)\ar[r] & (F\wedge F')(n^+\wedge m^+)
}
 \]
induce a map
\[
\xymatrix{
 F(X)\wedge F'(Y)\ar[r] & (F\wedge F')(X\wedge Y),
}
\]
natural in based $G$-spaces $X$ and $Y$.
This in turn induces a bimorphism of spectra from the pair $(F(\Sp), F'(\Sp))$
to $(F\wedge F')(\Sp)$ which gives rise to the
natural transformation
\[
 \xymatrix{
 a_{F, F'}\colon F(\Sp)\wedge F'(\Sp)\ar[r] & (F\wedge F')(\Sp).
 }
\]
Moreover, sending $x\in S^M$ to $[x, \id_{1^+}]$ induces an isomorphism $\lambda\colon \Sp\rightarrow \Gamma(1^+, -)(\Sp)$.
Now several coherence diagrams have to be checked, which we skip (cf.~\cite[Proposition 3.3 and p. 442]{MMSS}).
\begin{Thm}
\label{thm: assembly}
 The map $a_{X, Y}$ is a $\pi_*$-isomorphism, in particular a $G$-stable
 equivalence, if $X$ or $Y$ is strictly cofibrant.
\end{Thm}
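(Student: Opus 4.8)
The plan is to follow the structure of Lydakis' non-equivariant argument, reducing to free $\Gamma$-$G$-spaces and there playing the Wirthm\"uller isomorphism against an equivariant connectivity estimate.

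By the symmetry of the smash product and of the transformation $a$, I may assume $X$ is strictly cofibrant. First I would reduce to the case of a strictly cofibrant $\Gamma$-$G$-\emph{set}: writing $X$ as the diagonal of its degreewise simplicial filtration, both endofunctors $L(-)=(-)(\Sp)\wedge Y(\Sp)$ and $R(-)=((-)\wedge Y)(\Sp)$ commute with this realization, $a$ is the diagonal of the corresponding map, and realization preserves $\pi_*$-isomorphisms of the levelwise cofibrant simplicial spectra that occur (compare the proof of Proposition \ref{prop: smashing with strictly cofibrant preserves level equivalences}). For a strictly cofibrant $\Gamma$-$G$-set I would then run an induction over the cellular filtration of Proposition \ref{prop: discrete gamma set}. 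Both $L$ and $R$ preserve cofibres in the variable $X$, all attaching maps there are levelwise injective, and $(-)(\Sp)$ and $(-)\wedge Y$ turn levelwise injections into levelwise cofibrations (Lemma \ref{lem: diag}, Proposition \ref{prop: pushout product of two cofibrations}); since $L(X)$ and $R(X)$ are connective (Corollary \ref{cor: connectivity of Sp otimes A}), the long exact sequences of Lemma \ref{lem: les htpy incl} on the $R$-side and the analogous sequences obtained by smashing a stable cofibre sequence with $Y(\Sp)$ on the $L$-side, together with the five lemma, reduce everything to the free pieces $X=G^+\wedge_H\Gamma_H(S^+,-)$.

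For such a free $X$ the projection formula and the exactness of $G^+\wedge_H(-)$ (which preserves $\pi_*$-isomorphisms) reduce to $G=H$, so I take $X=\Gamma_G(S^+,-)$ with $d=|S|$. By the formula of Proposition \ref{prop: smashing with strictly cofibrant preserves level equivalences}, $X(\Sp)$ is the $S$-indexed product $\prod_S\Sp$ and $(X\wedge Y)(\Sp)$ has $k$-th level $Y(\prod_S S^{\bf k})$. I would compare $a_X$ with the assembly map of the coordinate axes: the projections $p_s\colon S^+\to 1^+$ induce a strict cofibration $\bigvee_S\Gamma(1^+,-)\to\Gamma_G(S^+,-)$ whose cofibre $\bar X$ has $k$-th level the fat diagonal quotient $(\prod_S S^{\bf k})/\bigvee_S S^{\bf k}$. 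Smashing this cofibration with $Y$ and applying $(-)(\Sp)$ produces a commuting square relating $a_X$ to the assembly map of $\bigvee_S\Gamma(1^+,-)$; the latter is an isomorphism because $\Gamma(1^+,-)$ is the monoidal unit (so its assembly map is the isomorphism $\lambda$) and $a$ commutes with wedges. It therefore suffices to show that the two cofibre spectra $\bar X(\Sp)\wedge Y(\Sp)$ and $(\bar X\wedge Y)(\Sp)$ are $\pi_*$-trivial.

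The left cofibre is easy: by the Wirthm\"uller isomorphism (Lemma \ref{lem: Wirthmueller}) the inclusion $\bigvee_S\Sp\to\prod_S\Sp$ is a $\pi_*$-isomorphism, its cofibre is exactly $\bar X(\Sp)$, and $\bar X(\Sp)$ is $G$-flat since $\bar X$ is strictly cofibrant; smashing a $\pi_*$-trivial flat spectrum with $Y(\Sp)$ (after a flat replacement of $Y(\Sp)$) is again $\pi_*$-trivial by Proposition \ref{prop: Smashing with flat G symmetric spectrum preserves level equivalences and pi * isomorphisms}. The hard part, and the main obstacle, is the right cofibre: I must show that applying $Y$ to the axis inclusion, i.e. the level maps $\bigvee_S Y(S^{\bf k})\to Y(\prod_S S^{\bf k})$, yields a stable equivalence. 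The relative cells of $\bigvee_S S^M\hookrightarrow\prod_S S^M$ sit in the fat diagonal and involve at least two sphere coordinates, so this inclusion is roughly $2|M^K|$-connected on $K$-fixed points for every $K\leqslant G$. The crucial input is that a $\Gamma$-$G$-space carries a map of this connectivity to a map of the same connectivity after evaluation, a relative, \emph{doubled} version of the bar-construction estimate of Lemma \ref{lem: connectivity}. Granting this, $(\bar X\wedge Y)(S^M)^K$ is about $2|M^K|$-connected, while $|S^{M\sqcup{\bf n}}|^K$ has dimension $|M^K|+n$; running $M$ through a complete universe and invoking the Adams-type vanishing used in Corollary \ref{cor: connectivity of Sp otimes A} then kills $\pi_n^{K,\mathcal{U}}((\bar X\wedge Y)(\Sp))$ for every $n$ and every $K$. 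Hence both cofibres are $\pi_*$-trivial, both verticals of the square are $\pi_*$-isomorphisms, and $a_X$ is a $\pi_*$-isomorphism. I expect the equivariant doubled connectivity estimate for $Y$ applied to the fat-diagonal inclusion to be the one genuinely new technical ingredient, everything else being formal or a citation.
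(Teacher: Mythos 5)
Your reduction steps are sound and, up to bookkeeping, parallel the first half of the paper's own proof: reduce to cofibrant objects, then run a cell induction in $X$ (closure of the class of good $X$'s under pushouts along generating cofibrations, filtered colimits and retracts) down to the free pieces $\Gamma_G(S^+,-)$. The genuine gap is your final step, and you flag it yourself: with $X=\Gamma_G(S^+,-)$ free but $Y$ still arbitrary, the $\pi_*$-triviality of the cofibre $(\bar X\wedge Y)(\Sp)$ --- equivalently, that the levelwise maps $\bigvee_S Y(S^{{\bf k}})\to Y(\prod_S S^{{\bf k}})$ induce a $\pi_*$-isomorphism --- rests on an ``equivariant doubled connectivity estimate'' that you grant without proof. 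Nothing in the paper supplies it: Lemma \ref{lem: connectivity} is an \emph{absolute} connectivity statement about the bar construction, not a relative one, and your map is not even of the form $Y(f)$ for a single highly connected map $f$; it factors as the cross-effect map $\bigvee_S Y(S^{{\bf k}})\to Y(\bigvee_S S^{{\bf k}})$ followed by $Y$ applied to the fat-wedge inclusion, so you would in fact need two separate equivariant estimates, each with the delicate fixed-point bookkeeping over all subgroups seen in the proof of Lemma \ref{lem: Wirthmueller}, where the connectivity of $L$-fixed points is governed by $\min_i|M^{J_i}|$ over orbit types rather than by $|M^L|$. Since this estimate carries the entire difficulty of the theorem in your set-up, the proposal as written reduces the statement to an unproven lemma of comparable weight rather than proving it.

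The gap is avoidable, and the paper's proof shows how: after reducing $X$ to $\Gamma_G(S_1^+,-)$, run the same cell induction a second time, now in the variable $Y$ (the identical closure properties hold with $X$ fixed and free), reducing to $Y=\Gamma_G(S_2^+,-)$ as well. Then the assembly map becomes the concrete comparison $\Sp^{\times S_1}\wedge\Sp^{\times S_2}\to\Sp^{\times(S_1\times S_2)}$, and no new connectivity input is needed: the map $\Sp^{\vee S_1}\wedge\Sp^{\vee S_2}\to\Sp^{\times S_1}\wedge\Sp^{\times S_2}$ is a $\pi_*$-isomorphism by Lemma \ref{lem: Wirthmueller} combined with flatness (Propositions \ref{prop: product G flat}, \ref{prop: characterization flatness} and \ref{prop: Smashing with flat G symmetric spectrum preserves level equivalences and pi * isomorphisms}), the composite $\Sp^{\vee(S_1\times S_2)}\cong\Sp^{\vee S_1}\wedge\Sp^{\vee S_2}\to\Sp^{\times(S_1\times S_2)}$ is a $\pi_*$-isomorphism by Lemma \ref{lem: Wirthmueller} again, and two-out-of-three finishes the argument. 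Your treatment of the left cofibre (Wirthm\"uller plus a flat replacement of $Y(\Sp)$) is fine; it is precisely the right cofibre that forces either the missing estimate or this second induction.
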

\begin{proof}
In view of Propositions \ref{prop: Smashing with flat G symmetric spectrum preserves level equivalences and pi * isomorphisms} and 
\ref{prop: smashing with strictly cofibrant preserves level equivalences} and Lemma \ref{lem: diag} we may assume that $X$ and $Y$ are projectively cofibrant.
If we fix $Y$, then the class of $\Gamma$-$G$-spaces $X$ for which the assembly map is a $\pi_*$-isomorphism
is closed under pushouts along generating projective cofibrations,
filtered colimits along projective cofibrations
and retracts.
This reduces to consider $X = \Gamma_G(S_1^+, -)$ for some based finite $G$-set $S^+$
and applying the same reasoning again reduces to $Y = \Gamma_G(S_2^+, -)$ for some based finite $G$-set $S_2^+$.
In this case we have to show that
\[
\xymatrix{
\Sp^{\times S_1}\wedge \Sp^{\times S_2}\ar[r]& \Sp^{\times (S_1\times S_2)}
}
\]
induced by the bimorphism
\[
\xymatrix{
(S^{\bf{n}})^{\times S_1}\wedge (S^{\bf{m}})^{\times S_2}\ar[r] & (S^{\bf{n\sqcup m}})^{\times (S_1\times S_2)}, ((x_i), (y_j))\mapsto (x_i\wedge y_j)
}
\]
is a $\pi_*$-isomorphism.
Precomposition with the $\pi_*$-isomorphism (Proposition \ref{prop: Smashing with flat G symmetric spectrum preserves level equivalences and pi * isomorphisms}, Proposition \ref{prop: characterization flatness}
and Lemma \ref{lem: Wirthmueller}) 
$\Sp^{\vee S_1}\wedge \Sp^{\vee S_2}\rightarrow \Sp^{\times S_1}\wedge \Sp^{\vee S_2}\rightarrow \Sp^{\times S_1}\wedge \Sp^{\times S_2}$
is a $\pi_*$-isomorphism by Lemma \ref{lem: Wirthmueller}.
Hence the map is a is a $\pi_*$-isomorphism as well.
\end{proof}

\begin{Prop}
\label{prop: pushout product and monoid axiom}
\begin{itemize}
\item[(a)] Smashing with a strictly cofibrant $\Gamma$-$G$-space
 preserves stable equivalences.
 
\item[(b)]\text{(Pushout product axiom)} If $F\rightarrow F'$, $\tilde{F}\rightarrow \tilde{F}'$ are two strict cofibrations
 (resp. projective cofibrations)
 of $\Gamma$-$G$-spaces, then the pushout product map
 \[
 \xymatrix{
  F\wedge \tilde{F}'\cup_{F\wedge \tilde{F}} F'\wedge \tilde{F}\ar[r]& F'\wedge \tilde{F}'
}
  \]
is a strict cofibration (resp. projective cofibration).
If in addition one of the former maps is a stable equivalence, then
so is the pushout product.

\item[(c)]\text{(Monoid Axiom)} Let $I$ denote the smallest class of maps of $\Gamma$-$G$-spaces which contains
the maps of the form $A\wedge Z\rightarrow B\wedge Z$, where $A\rightarrow B$ is a
stable equivalence and a projective cofibration (resp. strict cofibration)
and which is closed under cobase change and transfinite composition.
Then every map in $I$ is a stable equivalence.
\end{itemize}
\end{Prop}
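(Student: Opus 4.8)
The plan is to treat the three parts in order, since each builds on its predecessor and on the monoidal machinery already assembled in this section.

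For part (a), I would reduce the claim to a statement about spectra via the functor $(-)(\Sp)$. By definition a map $f$ of $\Gamma$-$G$-spaces is a stable equivalence precisely when $f(\Sp)$ is a $\pi_*$-isomorphism. Given a strictly cofibrant $F$ and a stable equivalence $g\colon A\rightarrow B$, I would consider the commuting square relating $(F\wedge A)(\Sp)$, $(F\wedge B)(\Sp)$ and $F(\Sp)\wedge A(\Sp)$, $F(\Sp)\wedge B(\Sp)$ supplied by the assembly maps $a_{F,A}$ and $a_{F,B}$. By Theorem~\ref{thm: assembly} both assembly maps are $\pi_*$-isomorphisms, so it suffices that $F(\Sp)\wedge g(\Sp)$ is a $\pi_*$-isomorphism. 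Now $F(\Sp)$ is $G$-flat, since $F$ strictly cofibrant implies $F(\Sp)$ is a flat cofibration of spectra by the Quillen pair of Proposition~\ref{prop: adjunction gamma spaces spectra}; hence by Proposition~\ref{prop: Smashing with flat G symmetric spectrum preserves level equivalences and pi * isomorphisms} smashing with $F(\Sp)$ preserves $\pi_*$-isomorphisms, and we are done by the two-out-of-three property.

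Part (b) consists of two assertions. The first, that the pushout product of two strict (resp. projective) cofibrations is again such a cofibration, is exactly Proposition~\ref{prop: pushout product of two cofibrations}, so I would simply cite it. For the second, assume in addition that, say, $F\rightarrow F'$ is a stable equivalence. Since $\tilde F\rightarrow \tilde F'$ is a cofibration with strictly cofibrant cofiber, I would use the long exact sequence from Lemma~\ref{lem: les htpy incl} applied to the levelwise injective maps, together with part (a): smashing the stable equivalence $F\rightarrow F'$ with the cofibrant pieces $\tilde F$, $\tilde F'$ and the cofiber $\tilde F'/\tilde F$ yields stable equivalences. Comparing the cofiber of the pushout product map, which is $F'/F\wedge \tilde F'/\tilde F$ by the computation in the proof of Proposition~\ref{prop: pushout product of two cofibrations}, against the corresponding cofibers, the five lemma on the long exact sequences of $\pi_*^{H,\mathcal{U}}$ forces the pushout product to be a $\pi_*$-isomorphism, hence a stable equivalence.

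Part (c), the monoid axiom, is the step I expect to be the main obstacle, because it requires control of stable equivalences under transfinite composition and arbitrary cobase change rather than just the well-behaved cofibrations of (b). The generators of $I$ are maps $A\wedge Z\rightarrow B\wedge Z$ with $A\rightarrow B$ an acyclic cofibration; by part (a) each such generator is a stable equivalence, and it is moreover levelwise injective, so Lemma~\ref{lem: les htpy incl} gives a long exact sequence identifying the relevant cofibers. The plan is to show that the class of stable equivalences that are also levelwise injective is closed under cobase change and transfinite composition, using that $(-)(\Sp)$ preserves colimits and cones (as noted in the proof of Lemma~\ref{lem: les htpy incl}) and that $\pi_*^{H,\mathcal{U}}$ commutes with filtered colimits along $G$-cofibrations together with the vanishing of negative homotopy groups from Corollary~\ref{cor: connectivity of Sp otimes A}. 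The delicate point is the cobase change: one pushes out a levelwise-injective stable equivalence along an arbitrary map, and must verify that the induced map on cofibers is a $\pi_*$-isomorphism; here I would exploit that the cofiber is preserved under the pushout and invoke the exact sequence of Lemma~\ref{lem: les htpy incl} once more to transfer the isomorphism statement across the pushout square, checking that injectivity is maintained so the lemma applies at each transfinite stage.
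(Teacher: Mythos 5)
Your part (a) is correct and is exactly the paper's argument: the assembly comparison of Theorem~\ref{thm: assembly} combined with Proposition~\ref{prop: Smashing with flat G symmetric spectrum preserves level equivalences and pi * isomorphisms}, using that $(-)(\Sp)$ is left Quillen so that $F(\Sp)$ is $G$-flat. In part (b), however, you smash $F\to F'$ with ``the cofibrant pieces $\tilde F$, $\tilde F'$''; this is unjustified, since the hypothesis is only that $\tilde F\to \tilde F'$ is a cofibration --- neither its source nor its target need be cofibrant. The only object whose cofibrancy comes for free is the cofiber $\tilde F'/\tilde F$ (a cobase change of $\tilde F\to\tilde F'$ along $\tilde F\to *$), and that is all one needs: the pushout product is levelwise injective with cofiber $F'/F\wedge \tilde F'/\tilde F$ (Proposition~\ref{prop: pushout product of two cofibrations} and its proof), the long exact sequence of Lemma~\ref{lem: les htpy incl} shows that $F'/F$ is stably trivial, part (a) applied to the strictly cofibrant factor $\tilde F'/\tilde F$ shows that this cofiber is stably trivial, and the long exact sequence then shows that the pushout product is a stable equivalence. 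No five lemma, and no smashing with $\tilde F$ or $\tilde F'$, is needed.

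The serious gap is in part (c): the assertion ``by part (a) each such generator is a stable equivalence'' is false as stated, and it is precisely the point where the monoid axiom has content. In a generator $A\wedge Z\to B\wedge Z$ the factor $Z$ is an \emph{arbitrary} $\Gamma$-$G$-space, whereas part (a) applies only when the fixed smash factor is strictly cofibrant; if part (a) covered arbitrary $Z$, the monoid axiom would be an immediate corollary, and the paper would not need to appeal to the argument of~\cite[Lemma 1.7]{sthomalg}. The missing step runs as follows. The cofiber $B/A$ is strictly cofibrant (being a cobase change of $A\to B$) and stably trivial by Lemma~\ref{lem: les htpy incl}. Choose a cofibrant replacement $Z^c\to Z$ in the strict model structure; this is a level equivalence, so $(B/A)\wedge Z^c\to (B/A)\wedge Z$ is a level equivalence by Proposition~\ref{prop: smashing with strictly cofibrant preserves level equivalences}, while $(B/A)\wedge Z^c$ is stably trivial by part (a), legitimately this time since $Z^c$ is cofibrant. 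Hence $(B/A)\wedge Z$ is stably trivial, and since the generator is levelwise injective with cofiber $(B/A)\wedge Z$, Lemma~\ref{lem: les htpy incl} shows it is a stable equivalence. Your closure arguments for cobase change (cofibers are preserved under pushout along levelwise injections) and for transfinite composition are then the routine part and go through essentially as you describe.
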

\begin{proof}
The first part follows from Theorem \ref{thm: assembly} and Proposition \ref{prop: Smashing with flat G symmetric spectrum preserves level equivalences and pi * isomorphisms}.
The second part follows from Proposition \ref{prop: pushout product of two cofibrations}, Lemma \ref{lem: les htpy incl} and
the first part.
It remains to prove the third part. This is in analogy with~\cite[Lemma 1.7]{sthomalg}.
\end{proof}

\begin{Rmk}
Define a (commutative) $\Gamma$-$G$-ring to be a (commutative) monoid in the symmetric monoidal category $\Gamma(G\sset)$.
A left $R$-module is a $\Gamma$-$G$-space $M$ together with a map $R\wedge M\rightarrow M$
satisfying associativity and unit conditions.
Defining weak equivalences (resp. fibrations) to be stable equivalences (resp. stable fibrations or stable strict fibrations)
and cofibrations by the adequate lifting property,
it follows essentially from the previous proposition (cf.~\cite[Theorem 2.2]{sthomalg}) that, for any $\Gamma$-$G$-ring $R$,
the category of left $R$-modules becomes a cofibrantly generated closed $G$-simplicial model category.

Suppose $k$ is a commutative $\Gamma$-$G$-ring.
The category of left $k$-modules is a symmetric monoidal category with respect to the smash product
$A\wedge_k B$ which is the coequalizer of the two actions $A\wedge k\wedge B\rightrightarrows A\wedge B$
given by multiplication.

A $k$-algebra is then a monoid in $k$-modules and the category of $k$-algebras is a closed $G$-simplicial
model category when defining a map to be a weak equivalence (resp. fibration) if the underlying map of $k$-modules
has this property (cf.~\cite[Theorem 2.5]{sthomalg}).
\end{Rmk}
\end{subsection}
\end{section}

\newpage
\begin{section}{Geometric fixed points of \texorpdfstring{$\Gamma$-$G$-spaces}{Gamma-G-spaces}}
\label{sec: Geometric fixed points of Gamma G spaces}
In this section we construct a geometric fixed points functor
\[
\xymatrix{
\Phi^G\colon \Gamma(G\sset)\ar[r]& \Gamma(\sset).
}
\]
Given a $\Gamma$-$G$-space $A$, $\Phi^G A$
is defined to be the $\Gamma$-space given by $(\Phi^GA)(k^+) = A((k^+)^{\wedge G})^G$.
This is in fact a lax symmetric monoidal functor.
The transformation $(\Phi^GX)\wedge (\Phi^GY)\rightarrow \Phi^G(X\wedge Y)$ is induced by the map
\[
\xymatrix{
 X((k^+)^{\wedge G})^G\wedge Y((l^+)^{\wedge G})^G\ar[r] & (X\wedge Y)((kl^+)^{\wedge G})^G,\ (x\wedge y)\mapsto [\id, x\wedge y]
}
 \]
and the map $\Gamma(1^+, -)\rightarrow \Phi^G\Gamma(1^+, -)$ is defined to be the isomorphism
\[
\Gamma(1^+, k^+)\cong \Gamma(1^+, ((k^+)^{\wedge G})^G)\cong \Gamma(1^+, (k^+)^{\wedge G})^G.
\]

The functor $\Phi^G(-)$ enjoys several good properties, which we collect in the next two propositions.

\begin{Prop}
 A map $f\colon A\rightarrow B$ of $\Gamma$-$G$-spaces is a stable equivalence if and only if,
 for all $H\leqslant G$, the map $\Phi^H(f)\colon \Phi^HA\rightarrow \Phi^HB$ is a stable equivalence.
\end{Prop}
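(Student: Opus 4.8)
The plan is to transport everything to symmetric spectra, where $\pi_*$-isomorphisms are detected by geometric fixed points, and then to match the two geometric fixed point constructions across the functor $(-)(\Sp)$. First I would unwind both sides. By definition, $f$ is a stable equivalence precisely when $f(\Sp)$ is a $\pi_*$-isomorphism of $G$-symmetric spectra (Definition \ref{def: pi * isomorphism}), i.e. when $\pi_n^{H,\mathcal{U}}(f(\Sp))$ is an isomorphism for all $H\leqslant G$ and all $n\in\Z$; likewise $\Phi^H(f)$ is a stable equivalence of $\Gamma$-spaces exactly when the map of ordinary symmetric spectra $(\Phi^H f)(\Sp)$ is a $\pi_*$-isomorphism. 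So the proposition becomes the assertion that $f(\Sp)$ is a $\pi_*$-isomorphism if and only if, for every $H\leqslant G$, the ordinary spectrum map $(\Phi^H f)(\Sp)$ is a $\pi_*$-isomorphism.

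The key step is a natural $\pi_*$-isomorphism
\[
(\Phi^H A)(\Sp)\ \simeq\ \Phi^H(A(\Sp)),
\]
comparing the spectrum built from the $\Gamma$-space $\Phi^H A$ with the geometric fixed points of the $G$-symmetric spectrum $A(\Sp)$ restricted along $H\leqslant G$, where $\Phi^H$ on the right denotes geometric fixed points of $G$-symmetric spectra. I would construct the comparison map from the evident levelwise maps out of $A((k^+)^{\wedge H})^H$ and then verify it is a $\pi_*$-isomorphism by reduction to generators: using the skeletal filtration of a strictly cofibrant $\Gamma$-$G$-space (Propositions \ref{prop: filtration gamma space} and \ref{prop: discrete gamma set}), the long exact sequences of Lemma \ref{lem: les htpy incl}, and the connectivity from Corollary \ref{cor: connectivity of Sp otimes A}, the claim reduces to the free $\Gamma$-$G$-spaces $G^+\wedge_K\Gamma_K(S^+,-)$, on which both functors take explicit and directly comparable values. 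Passing to a cofibrant replacement extends this to arbitrary $A$, and naturality then produces a commuting square relating $(\Phi^H f)(\Sp)$ and $\Phi^H(f(\Sp))$ whose horizontal maps are $\pi_*$-isomorphisms, so that one vertical map is a $\pi_*$-isomorphism if and only if the other is.

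Finally I would invoke the detection principle that a map of ($G$- or $H$-)symmetric spectra is a $\pi_*$-isomorphism if and only if all of its geometric fixed points $\Phi^K$ are non-equivariant $\pi_*$-isomorphisms; for the connective spectra at hand this follows from isotropy separation. Combined with the comparison above, the condition ``$(\Phi^H f)(\Sp)$ is a $\pi_*$-isomorphism for all $H$'' is equivalent to ``$\Phi^H(f(\Sp))$ is a $\pi_*$-isomorphism for all $H$'', which in turn is equivalent to ``$f(\Sp)$ is a $\pi_*$-isomorphism'', i.e. to $f$ being a stable equivalence, and both implications run symmetrically. The main obstacle is the commutation $(\Phi^H A)(\Sp)\simeq\Phi^H(A(\Sp))$: one must fix a workable point-set model of geometric fixed points of $G$-symmetric spectra and carry out the reduction to free $\Gamma$-$G$-spaces compatibly with both constructions, checking that the fixed-point functors interact correctly with the skeletal pushouts and filtered colimits up to $\pi_*$-isomorphism.
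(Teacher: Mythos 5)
Your overall architecture---commute geometric fixed points with $(-)(\Sp)$, then invoke a detection principle---parallels the paper's, but the execution has a genuine gap at the detection step. You stay inside $G$-\emph{symmetric} spectra and assert that a map of $G$-symmetric spectra is a $\pi_*$-isomorphism if and only if all its geometric fixed points $\Phi^K$ are non-equivariant $\pi_*$-isomorphisms, ``for the connective spectra at hand\ldots{} from isotropy separation.'' For symmetric spectra this is precisely the delicate point. Any point-set model of $\Phi^H$ for $G$-symmetric spectra (say $(\Phi^H X)_n = X(n\cdot H)^H$, indexed on copies of the free $H$-set $H$) is homotopically well behaved only under semistability-type hypotheses: the identification $\pi_*(\Phi^G X)\cong \pi_*^G(\widetilde{E}\mathcal{P}\wedge X)$, which is the computational heart of the isotropy-separation argument, is a theorem about the \emph{orthogonal} model and is not available for general $G$-symmetric spectra---this is the same phenomenon as naive homotopy groups of symmetric spectra being ``wrong.'' Connectivity is not the relevant hypothesis and does not repair it. So as written, your step 3 rests on an unproved claim, and filling it would require proving in addition that the spectra $A(\Sp)$ (and the auxiliary spectra in your cell induction) are $G$-semistable.

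The paper avoids this entirely by geometric realization into $G$-\emph{orthogonal} spectra: $|A(\Sp)|$ underlies a $G$-orthogonal spectrum $|A|(\Sp)$, and since $|(S^{\mathbf{n}})^{\wedge H}|\cong S^{n\rho_H}$, the one-point compactification of $n$ copies of the regular representation of $H$, one gets a natural \emph{point-set isomorphism} $|(\Phi^H A)(\Sp)|\cong \Phi^H(|A|(\Sp))$---no cofibrant replacement, no cell induction, no comparison only up to $\pi_*$-isomorphism (so your laborious step 2 is also unnecessary in that approach, though your reduction to free $\Gamma$-$G$-spaces would likely work). The proof is then a chain of citable equivalences: $f$ is a stable equivalence iff $f(\Sp)$ is a $\pi_*$-isomorphism of $G$-symmetric spectra (definition), iff $|f|(\Sp)$ is a $\pi_*$-isomorphism of $G$-orthogonal spectra (Hausmann's comparison), iff $\Phi^H(|f|(\Sp))$ is a $\pi_*$-isomorphism for all $H\leqslant G$ (Schwede's detection theorem for orthogonal spectra, where isotropy separation is legitimately applicable), iff each $\Phi^H f$ is a stable equivalence of $\Gamma$-spaces. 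If you want a correct proof along your lines, either import the orthogonal detour as the paper does, or supply the missing semistability argument for the symmetric-spectrum model of $\Phi^H$.
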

\begin{proof}
Given a $\Gamma$-$G$-space the $G$-symmetric spectrum (of spaces) $|A(\Sp)|$
is the underlying $G$-symmetric spectrum of a $G$-orthogonal spectrum $|A|(\Sp)$
(by abuse of notation, we denote the topological sphere spectrum by $\Sp$, too).
Moreover, $|(\Phi^HA)(\Sp)|$ is naturally isomorphic to the geometric fixed point spectrum $\Phi^H(|A|(\Sp))$
of the $G$-orthogonal spectrum  $|A|(\Sp)$~\cite{Schwede Lectures on equivariant stable homotopy theory}.
This follows from the fact that $|(S^{\bf{n}})^{\wedge H}|$ is isomorphic to the one point compactification $S^{n\rho_H}$
of $n$ copies of the regular representation $\rho_H$ of $H$.
Now, a morphism $f\colon A\rightarrow B$ of $\Gamma$-$G$-spaces is a $G$-stable equivalence
if and only if $A(\Sp)\rightarrow B(\Sp)$ is a $\pi_*$-isomorphism of $G$-symmetric spectra by definition.
This is the case if and only if $|A|(\Sp)\rightarrow |B|(\Sp)$ is a $\pi_*$-isomorphism
of $G$-orthogonal spectra~\cite[p. 65]{Hausmann}. Equivalently,
$\Phi^H(|A|(\Sp))\rightarrow \Phi^H(|B|(\Sp))$ is a $\pi_*$-isomorphism of orthogonal spectra for all subgroups $H\leqslant G$~\cite[Theorem 7.12]{Schwede Lectures on equivariant stable homotopy theory}.
And this is the case if and only if $\Phi^HA\rightarrow \Phi^HB$ is a stable equivalence
of $\Gamma$-spaces for all $H\leqslant G$~\cite[p. 65]{Hausmann}.
\end{proof}
\begin{Prop}
\begin{itemize}
\item[(a)] For any based finite $G$-set $S^+$, the map
\[
\xymatrix{
S^+\wedge \Gamma(1^+, -)\ar[r]& \Gamma_G(S^+,-),\ s\wedge \phi\mapsto (\phi\circ p_s)
}
\]
induces a stable equivalence $(S^+)^G\wedge \Gamma(1^+, -)\simeq \Phi^G(\Gamma_G(S^+, -))$.
\item[(b)] $(\Phi^GA)_{c}\wedge (\Phi^GB)\rightarrow \Phi^G(A\wedge B)$ is a stable equivalence whenever $A$ or $B$ is strictly cofibrant.
Here, $X_{c}$ denotes a cofibrant replacement in the stable strict model structure.
\end{itemize}
\end{Prop}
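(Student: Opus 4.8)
The plan is to prove both statements by reducing to the computations of the sphere spectrum already carried out in the excerpt, using the identification of $\Phi^G$ with the geometric fixed points of orthogonal spectra.

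For part (a), I would first observe that the natural map $S^+ \wedge \Gamma(1^+,-) \to \Gamma_G(S^+,-)$, after applying $\Phi^G$, should be compared through the chain of isomorphisms already assembled in this section. The key is that $\Phi^G(\Gamma_G(S^+,-))(k^+) = \Gamma_G(S^+, (k^+)^{\wedge G})^G$. Since the previous proposition identifies $|(\Phi^G\Gamma_G(S^+,-))(\Sp)|$ with the geometric fixed point spectrum $\Phi^G(|\Gamma_G(S^+,-)|(\Sp)) = \Phi^G(\Sp^{\times S})$, I would compute the geometric fixed points of the product $G$-symmetric spectrum $\Sp^{\times S}$. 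By Lemma \ref{lem: Wirthmueller} the inclusion $\bigvee_S \Sp \to \prod_S \Sp$ is a $\pi_*$-isomorphism, and geometric fixed points preserve $\pi_*$-isomorphisms (\cite[Theorem 7.12]{Schwede Lectures on equivariant stable homotopy theory}), so it suffices to compute $\Phi^G$ of $\bigvee_S \Sp = \Sp^{\vee S}$. Since $\Phi^G$ is symmetric monoidal and sends $\Sp$ to $\Sp$, and since geometric fixed points commute with wedges, one obtains $\Phi^G(\Sp^{\vee S}) \simeq \Sp^{\vee S^G} = (S^+)^G \wedge \Sp$. Translating back through the isomorphism $|(\Phi^G\Gamma(1^+,-))(\Sp)| \cong \Sp$, this gives the desired stable equivalence $(S^+)^G \wedge \Gamma(1^+,-) \simeq \Phi^G(\Gamma_G(S^+,-))$, and I would verify that the map induced by $s \wedge \phi \mapsto \phi \circ p_s$ realizes this identification on the nose, up to the $\pi_*$-isomorphisms already in play.

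For part (b), the strategy is an induction over the skeletal filtration, mirroring the proof of Theorem \ref{thm: assembly}. Assuming $B$ strictly cofibrant (the case $A$ strictly cofibrant being symmetric), I would fix $B$ and consider the class of $\Gamma$-$G$-spaces $A$ for which the lax monoidal transformation becomes a stable equivalence after cofibrant replacement. Using that $\Phi^G$ is lax symmetric monoidal together with the previous proposition characterizing stable equivalences via $\Phi^H$, one reduces, as in Theorem \ref{thm: assembly}, to the free generators $A = \Gamma_G(S_1^+,-)$ and $B = \Gamma_G(S_2^+,-)$. For these, part (a) computes both sides: the left side becomes $(S_1^+)^G \wedge (S_2^+)^G \wedge \Gamma(1^+,-)$ while the right side is $\Phi^G(\Gamma_G((S_1 \times S_2)^+,-)) \simeq ((S_1 \times S_2)^+)^G \wedge \Gamma(1^+,-)$. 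The map is then an equivalence because taking $G$-fixed points commutes with products of finite $G$-sets, $(S_1 \times S_2)^G = S_1^G \times S_2^G$, so the underlying pointed sets agree.

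The main obstacle I anticipate is the bookkeeping in part (b) around the cofibrant replacement $X_c$ and the precise compatibility of the lax monoidal structure map with the skeletal pushout squares of Proposition \ref{prop: discrete gamma set}. One must ensure that the inductive step is compatible with pushouts along the free $\Gamma$-$G$-spaces $G^+ \wedge_{H_i} \Gamma_{H_i}(S_i^+,-)$ and that the relevant maps remain stable equivalences after smashing — here Proposition \ref{prop: pushout product and monoid axiom}(a) guarantees that smashing with a strictly cofibrant object preserves stable equivalences, which is what makes the induction close. The genuinely delicate point is verifying that $\Phi^G$ interacts correctly with the $\wedge_{\Sigma_n}$-constructions appearing in the filtration, since fixed points and quotients by $\Sigma_n$ do not commute in general; I would handle this by passing through the orthogonal spectrum model, where the identification $|(S^{\mathbf{n}})^{\wedge G}| \cong S^{n\rho_G}$ from the previous proposition makes the relevant $\Phi^G$-computation a standard one for suspension spectra of representation spheres.
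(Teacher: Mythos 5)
Your proposal is correct and follows essentially the same route as the paper: part (a) rests on the Wirthm\"uller isomorphism (Lemma \ref{lem: Wirthmueller}) combined with the preceding proposition that geometric fixed points detect, hence preserve, stable equivalences, and part (b) is the same cell induction as in Theorem \ref{thm: assembly}, reducing to the representables $\Gamma_G(S_i^+,-)$ and then passing from projectively to strictly cofibrant objects via Proposition \ref{prop: smashing with strictly cofibrant preserves level equivalences}. The only cosmetic differences are that in (a) the paper applies $\Phi^G$ directly to the stable equivalence $S^+\wedge \Gamma(1^+,-)\rightarrow \Gamma_G(S^+,-)$ and identifies its source with $(S^+)^G\wedge \Gamma(1^+,-)$, which spares you the extra verification that your abstract identification agrees with the stated map, and that your worry about $\wedge_{\Sigma_n}$-quotients never materializes, since the induction runs over generating projective cofibrations rather than the strict skeletal filtration.
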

\begin{proof}
Part $(a)$ follows from the Wirthm\"uller isomorphism Lemma \ref{lem: Wirthmueller} and the previous proposition.
This implies that $(b)$ holds for $A = \Gamma_G(S_1^+, -)$ and $B = \Gamma_G(S_2^+, -)$.
If we fix this $B$, then the class of $\Gamma$-$G$-space for which $(b)$ holds is closed under pushouts along generating projective cofibrations
($\Phi^G(-)$ takes pushouts along cofibrations to pushouts), filtered colimits along projective cofibrations (since $\Phi^G(-)$ commutes with such colimits) and retracts.
Thus $A$ may be an arbitrary projectively cofibrant $\Gamma$-$G$-space and the same argument shows that $B$ can be an arbitrary projectively cofibrant $\Gamma$-$G$-space.
This finishes the proof in view of Proposition \ref{prop: smashing with strictly cofibrant preserves level equivalences}.
\end{proof}
\end{section}

\newpage
\begin{section}{Appendix}
The following sections contain several proofs deferred from other sections.

\begin{subsection}{The strict model structure for $\Gamma$-$G$-spaces}
\label{subsec: The strict model structure for Gamma G spaces}
The aim of this subsection is to prove Theorem \ref{thm: gen strict model structure}
below.
We start by observing that we have the following adjunction
for a based right $\Sigma_n$- and left $\Sigma_l\times G$-space A:
\[
\xymatrix@R=0cm{
 A\wedge_{\Sigma_n}-: (G\times \Sigma_n)\sset \ar@<.3 ex>[r] &  (G\times \Sigma_l)\sset: \Map_{\sset}(A,-)^{\Sigma_l}. \ar@<.3 ex>[l]
}
\]
Here, $G\times\Sigma_n$ acts on $\Map_{\sset}(A, -)^{\Sigma_l}$ by $((\sigma, g)\cdot f)(a) := gf(g^{-1}a\sigma)$.
For pointed sets $S^+$ and $T^+$, $\Inj_*(S^+, T^+)$ (resp. $\Surj_*(S^+, T^+)$)
denotes the set of based injective (resp. surjective) maps $S^+\rightarrow T^+$.
We make the following assumptions.
\begin{ass}
\label{assumptions}
\begin{itemize}
 \item[(a)] There are structures of model categories on $G\times\Sigma_n$-spaces
            denoted by $\mathcal{G}^1_n\sset$
       and $\mathcal{G}^2_n\sset$ respectively, such that 
       the first one is $G\times\Sigma_n$-simplicial.

\item[(b)]  The class of $\mathcal{G}^2_n$-equivalences is included in the class of
            $\mathcal{G}^1_n$-equivalences for all $n\geq 0$.
 
\item[(c)] The adjoint pairs
\[
\xymatrix@R=0cm{
 \Inj_*(l^+, n^+)^+\wedge_{\Sigma_l}-: \mathcal{G}^1_l\sset \ar@<.3 ex>[r] &  \mathcal{G}^2_n\sset: \Map_{\sset}(\Inj_*(l^+, n^+)^+, - )^{\Sigma_n}, \ar@<.3 ex>[l]\\
 \Surj_*(n^+, l^+)^+\wedge_{\Sigma_n}-: \mathcal{G}^1_n\sset \ar@<.3 ex>[r] &  \mathcal{G}^1_l\sset: \Map_{\sset}(\Surj_*(n^+, l^+)^+, - )^{\Sigma_l} \ar@<.3 ex>[l]
}
\]
are Quillen adjunctions.
\end{itemize}
\end{ass}

Let $\Gamma_{\leq n}$ denote the full subcategory of $\Gamma$ with objects the
sets $l^+$, $l\leq n$.
As in~\cite{BF}, the truncation functor
$T_n\colon \Gamma(G\sset)\rightarrow \Gamma_{\leq n}(G\sset)$
has both a left and a right
adjoint denoted by $\sk_n$ and $\csk_n$ respectively.
By abuse of notation, we will usually write $\sk_n$ (resp. $\csk_n$) for the composition
$\sk_n\circ T_n$ (resp. $\csk_n$), too.

Consider a map $f\colon X\rightarrow Y$ between $\Gamma$-$G$-spaces. Then $f$ is a \emph{strict cofibration}
if, for all $n\geq 0$, the map 
\[
\xymatrix{
i_n(f)\colon (\sk_{n-1}Y)(n^+)\cup_{(\sk_{n-1} X)(n^+)} X(n^+)\ar[r] & Y(n^+)
}
\]
is a $\mathcal{G}^1_n$-cofibration.
Dually, $f$ is a \emph{strict fibration} if, for all $n\geq 0$, the map
\[
\xymatrix{
p_n(f)\colon X(n^+)\ar[r] & (\csk_{n-1}X)(n^+)\times_{(\csk_{n-1}Y)(n^+)}Y(n^+)
}
\]
is a $\mathcal{G}^1_n$-fibration.
Finally, $f$ is a \emph{strict weak equivalence} if it
is levelwise a $\mathcal{G}^1_n$-equivalence.

We prove
\begin{Thm}
\label{thm: gen strict model structure}
Under these assumptions, the strict notions of weak equivalences, fibrations and cofibrations make the
category $\Gamma(G\sset)$ into a $G$-simplicial model category.
\end{Thm}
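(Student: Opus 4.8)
The plan is to verify the model-category axioms $\mathbf{MC1}$--$\mathbf{MC5}$ directly, viewing $\Gamma(G\sset)$ as a diagram category over $\Gamma$ regarded as a generalized Reedy category (degree $=$ cardinality, injections raising and surjections lowering degree, automorphism groups $\Sigma_n$), and following the inductive scheme of Bousfield and Friedlander~\cite{BF}. Axiom $\mathbf{MC1}$ is immediate, since $G\sset$ is complete and cocomplete and (co)limits of $\Gamma$-$G$-spaces are formed levelwise. Axioms $\mathbf{MC2}$ and $\mathbf{MC3}$ reduce to the corresponding levelwise statements in each $\mathcal{G}^1_n\sset$, together with the observation that the latching and matching objects, being a colimit and a limit respectively, commute with retracts; both are routine.

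The content lies in the factorization axiom $\mathbf{MC5}$ and the lifting axiom $\mathbf{MC4}$, which I would establish by a simultaneous induction over the degree $n$. For the factorizations, having constructed the intermediate object $Z$ compatibly on levels $<n$, the value at level $n$ is obtained by factoring the induced comparison map
\[
(\sk_{n-1}Z)(n^+)\cup_{(\sk_{n-1}X)(n^+)} X(n^+)\longrightarrow (\csk_{n-1}Z)(n^+)\times_{(\csk_{n-1}Y)(n^+)} Y(n^+)
\]
in the model category $\mathcal{G}^1_n\sset$; the left-hand factor being a $\mathcal{G}^1_n$-cofibration (acyclic in the trivial-cofibration version) makes $X\to Z$ a strict cofibration (resp. an acyclic one), and dually the right-hand factor being a $\mathcal{G}^1_n$-fibration makes $Z\to Y$ a strict fibration. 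For $\mathbf{MC4}$ one solves, level by level, a lifting problem of $i_n$ against $p_n$ in $\mathcal{G}^1_n\sset$. Two points need care: the pieces constructed at level $n$ must be chosen $G\times\Sigma_n$-equivariantly so that they assemble into a genuine $\Gamma$-$G$-space, which is exactly what working throughout in the categories $\mathcal{G}^1_n\sset$ of $G\times\Sigma_n$-spaces accomplishes; and the maps $i_n(f)$, $p_n(f)$ must be identified with the relative latching and matching maps, which rests on the explicit description of $\sk$ and $\csk$ from Definition~\ref{def: skn cskn}.

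The hard part will be relating the levelwise structures across different degrees, and this is precisely where Assumptions~\ref{assumptions}(b) and~\ref{assumptions}(c) enter. Concretely, I would need to show that a strict cofibration is levelwise a $\mathcal{G}^1_n$-cofibration whose latching objects are built up by pushouts, and, crucially, that a strict cofibration which is in addition a levelwise $\mathcal{G}^1_n$-equivalence has acyclic latching maps (dually for matching maps and fibrations). To prove this I would analyze the skeleton filtration: since every morphism of $\Gamma$ factors, essentially uniquely, as a surjection followed by an injection, the relative latching object at level $n$ is assembled from the functors $\Inj_*(l^+, n^+)^+\wedge_{\Sigma_l}(-)$ and $\Surj_*(n^+, l^+)^+\wedge_{\Sigma_n}(-)$ applied to lower data. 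Assumption~\ref{assumptions}(c) says exactly that these two functors are left Quillen, hence preserve cofibrations and acyclic cofibrations, while Assumption~\ref{assumptions}(b) is what lets one pass from the $\mathcal{G}^2_n$-structure, in which the $\Inj$-adjunction takes values, back to the $\mathcal{G}^1_n$-structure. I expect the bookkeeping of which of the two structures, $\mathcal{G}^1_n$ or $\mathcal{G}^2_n$, occurs at each stage of the latching-object analysis, together with the verification that acyclicity propagates through the filtration, to be the main obstacle.

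Finally I would check that the resulting model structure is $G$-simplicial by verifying criterion (b) of the characterization following Definition~\ref{def: G-simplicial model category}: for a $G$-cofibration $K\to L$ in $G\sset$ and a strict cofibration $f$, the pushout-product must be a strict cofibration, acyclic when either map is. Since tensoring with a based $G$-space is computed levelwise and commutes with $\sk$ and $\csk$, the relative latching map of the pushout-product is itself the pushout-product of $K\to L$ with $i_n(f)$ formed in $\mathcal{G}^1_n\sset$; this is a $\mathcal{G}^1_n$-cofibration, acyclic in the appropriate cases, because $\mathcal{G}^1_n\sset$ is $G\times\Sigma_n$-simplicial by Assumption~\ref{assumptions}(a). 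This supplies the enrichment compatibility and completes the argument.
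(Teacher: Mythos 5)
Your plan follows the paper's proof essentially verbatim: the same level-by-level factorization of the map from the relative latching object to the relative matching object in $\mathcal{G}^1_n\sset$, the same assembly/lifting criterion, the same key lemmas (an acyclic strict cofibration has acyclic latching maps $i_n(f)$, proved via the skeleton filtration, the Quillen adjunctions of Assumption (c), Reedy's patching lemma, and Assumption (b) to pass from $\mathcal{G}^2_n$-equivalences back to $\mathcal{G}^1_n$-equivalences), and the same identification $i_n(j\Box f)\cong j\Box i_n(f)$ together with Assumption (a) for $G$-simpliciality. One small bookkeeping correction: in the paper the latching (skeleton) analysis uses only the $\Inj_*$-adjunction, while the $\Surj_*$-adjunction enters solely in the dual matching (coskeleton) analysis, rather than both appearing in the latching-object filtration as you suggest.
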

\begin{ex}
\begin{itemize}
\item Suppose $G$ is the trivial group.
We may take $\mathcal{G}_n^1\sset$ to be the model structure on
$\Sigma_n$-spaces where weak equivalences and fibrations are defined
by the forgetful functor to spaces and $\mathcal{G}_n^2$ to be the usual
model structure on $\Sigma_n$-spaces where weak equivalences and fibrations
are detected on all fixed points.
This recovers the model structure by Bousfield and Friedlander (cf.~\cite{BF}).

\item More generally, taking $\mathcal{G}_n^1\sset$ (resp. $\mathcal{G}_n^2\sset$)
to be the model structure with respect
to the family of subgroups of $G\times \Sigma_n$ that intersect $\{1\}\times \Sigma_n$ trivially
(resp. the family of all subgroups of $G\times \Sigma_n$) yields the
model structure applied throughout this paper.
\end{itemize}
\end{ex}

Before proving the theorem, we need a few preparations.

\begin{Prop}
\label{prop: ext}
Suppose $B$, $X\in \Gamma_{\leq n}(G\sset))$ and $u_{n-1}\colon T_{n-1}B \rightarrow T_{n-1}X$ is a map
in $\Gamma_{\leq n-1}(G\sset)$. A map $u^n\colon B(n^+)\rightarrow X(n^+)$ in $G\sset$ determines
a prolongation of $u_{n-1}$ to $u\colon B\rightarrow X$ in $\Gamma_{\leq n}(G\sset)$ if and only if
$u^n$ is $G\times \Sigma_n$-equivariant and fills in the following commutative diagram in $G\times\Sigma_n\sset$:
\begin{gather*}
 \xymatrix{
(\sk_{n-1} B)(n^+) \ar[r] \ar[d] & B(n^+) \ar[r]\ar[d] & (\csk_{n-1} B)(n^+) \ar[d]\\
(\sk_{n-1} X)(n^+) \ar[r] & X(n^+) \ar[r] & (\csk_{n-1} X)(n^+).
}
\end{gather*}
\end{Prop}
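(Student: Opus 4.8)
The plan is to use the fact that a functor on $\Gamma_{\leq n}$ is freely determined by its restriction to $\Gamma_{\leq n-1}$ together with its value at $n^+$, subject only to compatibility with the morphisms into and out of $n^+$, and that these morphisms are exactly recorded by the skeleton and coskeleton. Thus a prolongation $u$ of $u_{n-1}$ amounts to a choice $u^n = u(n^+)$ rendering $u$ natural with respect to every morphism of $\Gamma_{\leq n}$ involving $n^+$; the remaining naturality squares live entirely in $\Gamma_{\leq n-1}$ and hold because $u_{n-1}$ is already natural. So the whole statement reduces to translating "natural along the morphisms touching $n^+$" into the equivariance of $u^n$ together with commutativity of the two squares.

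For the \emph{only if} direction I would simply read off the two conditions. If $u$ is natural, then $u^n$ is $G$-equivariant as a map of $G$-spaces, and it is $\Sigma_n$-equivariant because $\Sigma_n$ is the automorphism group of $n^+$ in $\Gamma$ and acts through naturality; together these give $G\times\Sigma_n$-equivariance. The horizontal maps $(\sk_{n-1}-)(n^+)\to(-)(n^+)$ are assembled from the morphisms $\alpha\colon l^+\to n^+$ with $l\leq n-1$, so the left square commutes; dually the horizontal maps $(-)(n^+)\to(\csk_{n-1}-)(n^+)$ are assembled from the morphisms $\beta\colon n^+\to j^+$ with $j\leq n-1$, so the right square commutes.

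For the \emph{if} direction, the substantive one, I would set $u(k^+):=u_{n-1}(k^+)$ for $k\leq n-1$ and $u(n^+):=u^n$, and check naturality of $u$ along every morphism $a^+\to b^+$ by cases. For $a,b\leq n-1$ this is naturality of $u_{n-1}$. For an incoming morphism $\alpha\colon l^+\to n^+$ with $l\leq n-1$, precomposing the left square with the colimit leg $B(l^+)\to(\sk_{n-1}B)(n^+)$ and using that its composite with $(\sk_{n-1}B)(n^+)\to B(n^+)$ equals $B(\alpha)$ gives $u^n\circ B(\alpha)=X(\alpha)\circ u_{n-1}(l^+)$. Dually, for an outgoing morphism $\beta\colon n^+\to j^+$ with $j\leq n-1$, postcomposing the right square with the projection $(\csk_{n-1}X)(n^+)\to X(j^+)$ yields $X(\beta)\circ u^n=u_{n-1}(j^+)\circ B(\beta)$. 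Self-maps of $n^+$ that are bijections are covered by the $\Sigma_n$-equivariance of $u^n$.

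The main obstacle is the remaining self-maps of $n^+$ that are \emph{not} bijections, and I expect this to be the crux. Here I would invoke the epi-mono factorization in $\Gamma$: any based $\phi\colon n^+\to n^+$ that is not a bijection has image of cardinality $k\leq n-1$, hence factors as $\phi=\iota\circ\pi$ with $\pi\colon n^+\to k^+$ surjective and $\iota\colon k^+\to n^+$ injective, through an object of $\Gamma_{\leq n-1}$. Then $B(\phi)=B(\iota)\circ B(\pi)$ and $X(\phi)=X(\iota)\circ X(\pi)$, and chaining the incoming naturality for $\iota$ with the outgoing naturality for $\pi$ established above gives $u^n\circ B(\phi)=X(\iota)\circ u_{n-1}(k^+)\circ B(\pi)=X(\iota)\circ X(\pi)\circ u^n=X(\phi)\circ u^n$. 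This settles naturality in all cases, so $u^n$ prolongs $u_{n-1}$, and since the \emph{only if} argument shows the same two conditions are forced, the equivalence follows.
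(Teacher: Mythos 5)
Your proof is correct, and it is exactly the argument the paper outsources: the paper's ``proof'' is just a citation to Proposition 3.4 of Bousfield--Friedlander, whose content is the same case analysis you give (morphisms within $\Gamma_{\leq n-1}$, morphisms into and out of $n^+$ handled by the skeleton and coskeleton squares, bijective self-maps handled by $\Sigma_n$-equivariance). Your treatment of the non-bijective self-maps of $n^+$ via epi-mono factorization through some $k^+$ with $k\leq n-1$ is precisely the point that makes the two squares plus equivariance sufficient, so the proposal is a complete and faithful filling-in of the cited result.
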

\begin{proof}
See Proposition 3.4 from~\cite{BF}.
\end{proof}

\begin{Prop}
 \label{prop: ext square}
Consider a diagram
\begin{equation}
\begin{gathered}
\label{dd}
 \xymatrix{
A \ar[r]\ar[d] & X\ar[d]\\
B \ar[r]       & Y
}
\end{gathered}
\end{equation}
in $\Gamma_{\leq n}(G\sset)$ and a map $T_{n-1}B\rightarrow T_{n-1}X$ which makes the diagram
\begin{gather*}
 \xymatrix{
T_{n-1}A \ar[r]\ar[d] & T_{n-1}X\ar[d]\\
T_{n-1}B \ar[r]\ar[ru]& T_{n-1}Y
}
\end{gather*}
commute. Then, the diagram \eqref{dd} has a lift $B\rightarrow X$ if there is a lift in
the diagram of $G\times\Sigma_n$-spaces
\begin{gather*}
 \xymatrix{
(\sk_{n-1}B)(n^+)\cup_{(\sk_{n-1}A)(n^+)} A(n^+) \ar[r]\ar[d] & X(n^+) \ar[d]\\
B(n^+)                                       \ar[r]       & (\csk_{n-1}X)(n^+)\times_{(\csk_{n-1}Y)(n^+)} Y(n^+).
}
\end{gather*}
\end{Prop}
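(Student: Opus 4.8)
The plan is to construct the desired lift $u\colon B\to X$ by prolonging the given truncated lift one level at a time, using Proposition \ref{prop: ext} to reduce the whole problem to the single level $n$. Write $v\colon T_{n-1}B\to T_{n-1}X$ for the given map and $\ell\colon B(n^+)\to X(n^+)$ for the hypothesized lift in the diagram of $G\times\Sigma_n$-spaces. Since $\sk_{n-1}$, $\csk_{n-1}$ and $T_{n-1}$ depend only on the truncation below level $n$, the map $v$ supplies the components of $u$ in all degrees $\leq n-1$, and by the commutativity of the second displayed triangle these components already form a lift of the truncation $T_{n-1}\eqref{dd}$. It therefore suffices to produce a $G\times\Sigma_n$-equivariant map $u^n\colon B(n^+)\to X(n^+)$ prolonging $v$, which is exactly the content of Proposition \ref{prop: ext}.

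First I would simply set $u^n:=\ell$; its equivariance is part of the hypothesis. To invoke Proposition \ref{prop: ext} I must check that $u^n$ fills the commutative diagram of that proposition, i.e. that it is compatible with $v$ on both the skeleton and the coskeleton. Compatibility on the left (skeletal) square follows from the upper leg of the lifting square: the top map $(\sk_{n-1}B)(n^+)\cup_{(\sk_{n-1}A)(n^+)} A(n^+)\to X(n^+)$ restricts on the summand $(\sk_{n-1}B)(n^+)$ to the composite $(\sk_{n-1}B)(n^+)\to (\sk_{n-1}X)(n^+)\to X(n^+)$ induced by $v$, so the equation $u^n\circ(\text{left vertical})=(\text{top map})$ forces the left square of Proposition \ref{prop: ext} to commute. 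Compatibility on the right (coskeletal) square follows by projecting the bottom map $B(n^+)\to (\csk_{n-1}X)(n^+)\times_{(\csk_{n-1}Y)(n^+)} Y(n^+)$ onto its factor $(\csk_{n-1}X)(n^+)$: this factor is, by construction, the composite $B(n^+)\to (\csk_{n-1}B)(n^+)\to (\csk_{n-1}X)(n^+)$ induced by $v$, and the equation $(\text{bottom map})=(\text{right vertical})\circ u^n$ then identifies it with $B(n^+)\xrightarrow{u^n} X(n^+)\to (\csk_{n-1}X)(n^+)$.

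With the hypotheses verified, Proposition \ref{prop: ext} yields a prolongation $u\colon B\to X$ in $\Gamma_{\leq n}(G\sset)$ with $u(n^+)=u^n$ and $T_{n-1}u=v$. The remaining task is to confirm that $u$ genuinely lifts \eqref{dd}. In degrees $\leq n-1$ this is precisely the triangle hypothesis relating $v$ to the square. In degree $n$ it again comes from projecting the two equations expressing that $\ell$ is a lift: restricting the upper equation to the summand $A(n^+)$ of the pushout shows that $A(n^+)\to B(n^+)\xrightarrow{u^n} X(n^+)$ equals the level-$n$ component of $A\to X$, while projecting the lower equation onto $Y(n^+)$ (the second factor of the pullback, along the level-$n$ component of $X\to Y$) shows that $B(n^+)\xrightarrow{u^n} X(n^+)\to Y(n^+)$ equals the level-$n$ component of $B\to Y$.

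I do not expect a genuine obstacle: the entire argument is a diagram chase, and the only point requiring care is the bookkeeping of the universal properties of the pushout $(\sk_{n-1}B)(n^+)\cup_{(\sk_{n-1}A)(n^+)} A(n^+)$ and the pullback $(\csk_{n-1}X)(n^+)\times_{(\csk_{n-1}Y)(n^+)} Y(n^+)$, which is what lets one extract the four compatibilities above from the two equations saying that $\ell$ is a lift. All of these are routine once the maps defining the top and bottom legs of the lifting square are made explicit.
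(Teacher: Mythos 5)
Your proof is correct and follows the same route as the paper: the paper's own proof simply states that the proposition is a direct consequence of Proposition \ref{prop: ext}, and your argument is exactly that reduction, carried out with the diagram-chase details (equivariance of the lift, skeleton/coskeleton compatibility extracted from the two legs of the lifting square, and the final verification that the prolongation is a lift) made explicit.
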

\begin{proof}
This is a direct consequence of the preceding proposition.
\end{proof}

\begin{Prop}
\label{prop: diagram skeleta}
For any $\Gamma$-$G$-space $X$ and any positive integers $m$, $n\geq 0$, there is a pushout square
of $G\times \Sigma_n$-spaces
\begin{equation}
\begin{gathered}
\label{diagram skeleta}
\xymatrix{
 \Inj_*(l^+, n^+)^+\wedge_{\Sigma_l} (\sk_{l-1} X)(l^+) \ar[r] \ar[d] & (\sk_{l-1} X)(n^+) \ar[d]\\
 \Inj_*(l^+, n^+)^+\wedge_{\Sigma_l} X(l^+) \ar[r] & (\sk_l X)(n^+).
}
\end{gathered}
\end{equation}
Here the top and bottom horizontal maps are given by pushing forward along an element of $\Inj_*(l^+, n^+)$,
where one uses the canonical isomorphism $(\sk_l X)(l^+)\rightarrow X(l^+)$ for the lower one,
and the left and right vertical maps are induced by the canonical maps
$(\sk_{l-1}X)(l^+)\rightarrow X(l^+)$ and $(\sk_{l-1} X)(n^+)\rightarrow (\sk_l X)(n^+)$.
\end{Prop}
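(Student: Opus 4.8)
The plan is to verify the square directly on underlying simplicial sets and then transport it back equivariantly. First I would record that all four maps are $G\times\Sigma_n$-equivariant: $G$ acts only through the $X$-values, $\Sigma_n$ acts by postcomposition on $\Inj_*(l^+,n^+)$ and through functoriality on the targets $(\sk X)(n^+)$, and the two horizontal maps $[\phi\wedge x]\mapsto\phi_*x$ intertwine these actions because $(\tau\phi)_*=\tau_*\phi_*$ for $\tau\in\Sigma_n$. Well-definedness on the $\wedge_{\Sigma_l}$-quotients is the identity $(\phi\sigma)_*=\phi_*\sigma_*$ for $\sigma\in\Sigma_l$. Since pushouts of $G\times\Sigma_n$-spaces are computed on underlying simplicial sets, the forgetful functor creates the relevant colimits, so it then suffices to prove that the underlying square is a pushout, i.e.\ in each simplicial degree a pushout of sets.

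Next I would use the colimit description $(\sk_l X)(n^+)=\colim_{k^+\to n^+,\,k\le l}X(k^+)$ over the comma category $\Gamma_{\le l}\downarrow n^+$, together with the (surjection, injection) factorization of maps in $\Gamma$. Every generator $(\alpha\colon k^+\to n^+,\,x\in X(k^+))$ factors as $\alpha=m\circ e$ with $e$ surjective and $m\colon\bar k^+\hookrightarrow n^+$ injective, and the structural relation pushes it to the injective normal form $(m,\,e_*x)$ with $\bar k\le l$. Surjectivity of the comparison map from the pushout is then immediate: normal forms with $\bar k\le l-1$ lie in the image of $(\sk_{l-1}X)(n^+)$, while those with $\bar k=l$ are hit by $[m\wedge e_*x]\in\Inj_*(l^+,n^+)^+\wedge_{\Sigma_l}X(l^+)$.

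Injectivity is the heart of the matter, and the main obstacle is to show that the defining colimit performs no identifications beyond those already built into the pushout. I would isolate two invariants of a top-level normal form $(m,y)$ with $\bar k=l$: the image $S=\operatorname{im}(m)\subseteq n^+$ and the class of $y$ modulo the latching map $(\sk_{l-1}X)(l^+)\to X(l^+)$. Analysing the morphisms of $\Gamma_{\le l}\downarrow n^+$ into and out of a top-level object — using that any map relating two injective structure maps is itself injective, and that the target level is bounded by $l$ — one finds that the only available elementary relations are precisely (i) the internal $\Sigma_l$-action on $l^+$, which is exactly the identification imposed by $\wedge_{\Sigma_l}$, and (ii) when $y$ lies in the latching image, a reduction to a strictly lower level, which is exactly the gluing along $\Inj_*(l^+,n^+)^+\wedge_{\Sigma_l}(\sk_{l-1}X)(l^+)$. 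In particular distinct images $S$ are never identified and the inclusion $(\sk_{l-1}X)(n^+)\to(\sk_l X)(n^+)$ is itself injective. These are exactly the identifications present in the pushout, so the comparison map is bijective in every simplicial degree, which finishes the argument.

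Finally, I would remark that this is the equivariant refinement of the skeletal decomposition of Lydakis \cite[Theorem 3.10]{Lydakis} (our Proposition \ref{prop: filtration gamma space}); the new feature is that isolating the injective maps $\Inj_*(l^+,n^+)$ inside all of $\Gamma(l^+,n^+)$ is what forces the gluing to happen along the latching object rather than along $\sk_{l-1}\Gamma(l^+,-)$, and the bookkeeping of the two invariants above is precisely the content hidden in that reference.
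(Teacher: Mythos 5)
Your proposal is correct in outline and shares its first step with the paper: check equivariance of the four maps and use that the forgetful functor to spaces creates pushouts, so that everything reduces to a non-equivariant statement. Where you genuinely diverge is in how that underlying statement is handled. The paper observes that every based injection $l^+\to n^+$ factors uniquely as a permutation of $l^+$ followed by an order-preserving injection, so that $\Inj_*(l^+,n^+)^+\wedge_{\Sigma_l}A\cong \binom{l}{n}^+\wedge A$ naturally in $A$; under this isomorphism the square becomes exactly the one Lydakis proves to be a pushout in~\cite[Proposition 3.8]{Lydakis}, and the proof ends there. You instead re-prove that combinatorial fact from scratch, via the (surjection, injection) factorization in $\Gamma$, normal forms, and a zig-zag analysis of the comma category. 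Your route is self-contained and makes visible \emph{why} the gluing happens along the latching object (your closing remark), which the citation hides; the paper's route is shorter and less error-prone, at the price of outsourcing the heart of the argument. Note also that your final citation points at the wrong result: the statement you are re-proving is Lydakis' Proposition 3.8, not Theorem 3.10 (the latter is the filtration statement, i.e.\ Proposition \ref{prop: filtration gamma space} of this paper, which is a consequence).

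One step of your injectivity analysis deserves more care than ``one finds that''. Your two invariants cleanly separate top-level classes whose $X(l^+)$-component does \emph{not} lie in the image of the latching map: there the zig-zag argument works exactly as you say, since any elementary relation out of such a generator is forced to be a permutation. But for classes that \emph{do} meet the latching image you must still show that the identifications among lower-level elements created by zig-zags passing through level-$l$ objects are precisely those imposed by the pushout corner $\Inj_*(l^+,n^+)^+\wedge_{\Sigma_l}(\sk_{l-1}X)(l^+)$; this is delicate because the latching map $(\sk_{l-1}X)(l^+)\to X(l^+)$ is not injective for formal reasons, and the pushout is glued along $(\sk_{l-1}X)(l^+)$ rather than along its image. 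Relatedly, your parenthetical claim that $(\sk_{l-1}X)(n^+)\to(\sk_l X)(n^+)$ is injective is true for $\Gamma$-spaces --- because every injection in $\Gamma$ is split by a based projection, which is what ultimately controls these identifications --- but it is a lemma in its own right (the analogous statement for symmetric spectra fails), not a byproduct of the analysis as stated. Either carry out this case of the zig-zag argument explicitly, or do what the paper does and quote Lydakis.
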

\begin{proof}
The diagram is a commutative diagram of $G\times \Sigma_n$-spaces
and its underlying diagram of spaces is isomorphic to
\[
\xymatrix{
 \binom{l}{n}^+\wedge (\sk_{l-1} X)(l^+) \ar[r] \ar[d] & (\sk_{l-1} X)(n^+) \ar[d]\\
 \binom{l}{n}^+\wedge  X(l^+) \ar[r] & (\sk_l X)(n^+),
}
\]
where $\binom{l}{n}$ denotes the set of order-preserving injections of the set $\{1, \ldots, l\}$ into the set $\{1, \ldots, n\}$
both endowed with the natural ordering.
Lydakis shows (cf.~\cite[Proposition 3.8]{Lydakis}) that for any $\Gamma$-space $X$ and any positive integers $m$, $n\geq 0$ this is a pushout diagram,
hence (\ref{diagram skeleta}) it is a pushout diagram in $G\times \Sigma_n$-spaces.
\end{proof}

\begin{Lemma}{\cite[Lemma 3.7]{BF}}
 \label{lem: sk cof}
 Let $n$ be a non-negative integer and fix $N\leq n$. Consider a map $f\colon A \rightarrow B$ in $\Gamma(G\sset)$.
 If the maps $i_m(f)$ are $\mathcal{G}^1_m$-cofibrations (resp. acyclic $\mathcal{G}^1_m$-cofibrations) for all $m\leq N$,
 then the maps $(\sk_{l} A)(n^+) \rightarrow (\sk_l B)(n^+)$ are $\mathcal{G}^2_n$-cofibrations (resp. acyclic $\mathcal{G}^2_n$-cofibrations)
 for all $l\leq N$.
\end{Lemma}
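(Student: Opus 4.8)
The statement is an induction on $l$, carried out using the pushout square of Proposition~\ref{prop: diagram skeleta}. Fix $n$ and $N \leq n$, and suppose that $i_m(f)$ is a $\mathcal{G}^1_m$-cofibration (resp.\ acyclic $\mathcal{G}^1_m$-cofibration) for all $m \leq N$. The plan is to prove by induction on $l \leq N$ that $(\sk_l A)(n^+) \to (\sk_l B)(n^+)$ is a $\mathcal{G}^2_n$-cofibration (resp.\ acyclic $\mathcal{G}^2_n$-cofibration). For the base case $l = 0$, the skeleton in level zero is the constant zero object (since $A(0^+) = B(0^+) = *$), so the map is an isomorphism and in particular a cofibration.

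\emph{Inductive step.} Assume the claim holds for $l - 1$, so that $(\sk_{l-1} A)(n^+) \to (\sk_{l-1} B)(n^+)$ is a (acyclic) $\mathcal{G}^2_n$-cofibration. The square~\eqref{diagram skeleta} of Proposition~\ref{prop: diagram skeleta} expresses $(\sk_l X)(n^+)$ as the pushout of
\[
(\sk_{l-1} X)(n^+) \longleftarrow \Inj_*(l^+, n^+)^+ \wedge_{\Sigma_l} (\sk_{l-1} X)(l^+) \longrightarrow \Inj_*(l^+, n^+)^+ \wedge_{\Sigma_l} X(l^+),
\]
naturally in $X$. The map $f\colon A \to B$ thus induces a map of pushout squares, and I would compare the colimits by the gluing (cube) lemma in the model category $\mathcal{G}^2_n\sset$. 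To apply it, I need that three maps are (acyclic) $\mathcal{G}^2_n$-cofibrations: the map $(\sk_{l-1} A)(n^+) \to (\sk_{l-1} B)(n^+)$, which is the inductive hypothesis; and the two maps built by applying $\Inj_*(l^+, n^+)^+ \wedge_{\Sigma_l} (-)$ to $(\sk_{l-1} A)(l^+) \to (\sk_{l-1} B)(l^+)$ and to $A(l^+) \to B(l^+)$.

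\emph{The cofibrancy of the $\Inj_*$-terms.} This is where Assumption~\ref{assumptions}(c) enters, and I expect it to be the technical heart of the argument. The first adjunction there says that $\Inj_*(l^+, n^+)^+ \wedge_{\Sigma_l}(-)$ is left Quillen from $\mathcal{G}^1_l\sset$ to $\mathcal{G}^2_n\sset$, hence preserves (acyclic) $\mathcal{G}^1_l$-cofibrations. I would therefore verify that both $(\sk_{l-1} A)(l^+) \to (\sk_{l-1} B)(l^+)$ and $A(l^+) \to B(l^+)$ are (acyclic) $\mathcal{G}^1_l$-cofibrations, so that smashing sends them to (acyclic) $\mathcal{G}^2_n$-cofibrations. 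For the latter pair, observe that the map on latching objects $(\sk_{l-1} A)(l^+) \to (\sk_{l-1} B)(l^+)$ is itself handled by the inductive hypothesis (taking $n = l$, $l$ replaced by $l-1$), while the relative map $A(l^+) \to B(l^+)$ factors as a $\mathcal{G}^1_l$-cofibration through $i_l(f)$: indeed $i_l(f)$ is a (acyclic) $\mathcal{G}^1_l$-cofibration by hypothesis since $l \leq N$, and the pushout along the latching map assembles $A(l^+) \to B(l^+)$ from the already-cofibrant latching map together with $i_l(f)$. Chaining these two (acyclic) cofibrations gives the needed cofibrancy of $A(l^+) \to B(l^+)$ in $\mathcal{G}^1_l\sset$.

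\emph{Conclusion.} With all three legs of the pushout comparison being (acyclic) $\mathcal{G}^2_n$-cofibrations, the gluing lemma yields that the induced map of pushouts $(\sk_l A)(n^+) \to (\sk_l B)(n^+)$ is a (acyclic) $\mathcal{G}^2_n$-cofibration, completing the induction. The main obstacle is bookkeeping the interplay between the two model structures $\mathcal{G}^1$ and $\mathcal{G}^2$: the individual levelwise maps live in $\mathcal{G}^1$, the skeletal maps in level $n$ must be shown cofibrant in $\mathcal{G}^2$, and it is precisely the Quillen adjunction of Assumption~\ref{assumptions}(c) that bridges the two. The acyclic case runs in parallel, using that left Quillen functors and the gluing lemma both preserve acyclic cofibrations.
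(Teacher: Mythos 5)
Your induction scheme and your use of the pushout square from Proposition~\ref{prop: diagram skeleta} match the paper's strategy, but the inductive step contains a genuine gap --- in fact two, and they do not cancel. The ``gluing (cube) lemma'' you invoke does not exist for cofibrations: knowing that all three vertical maps of a map of pushout spans are (acyclic) cofibrations does \emph{not} imply that the induced map of pushouts is one. Already in pointed simplicial sets, map the span $S^0 \leftarrow \ast \rightarrow S^0$ (basepoint inclusions) to the constant span $S^0 \leftarrow S^0 \rightarrow S^0$ via the basepoint inclusion and two identity maps: all three vertical maps are cofibrations, yet the induced map of pushouts is the fold map $S^0 \vee S^0 \to S^0$, which is not injective. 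The correct statement --- Reedy's patching lemma, which the paper cites as~\cite[3.8]{BF} --- is asymmetric: it requires that \emph{one} leg $A_2 \to B_2$ be a (acyclic) cofibration and that the \emph{relative pushout-product map} on the other side, $A_1 \cup_{A_0} B_0 \to B_1$, be a (acyclic) cofibration; one then factors the map of pushouts through $B_1 \cup_{A_0} A_2$ and recognizes both factors as cobase changes.

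The second gap is that the two maps you propose to feed into the left Quillen functor $\Inj_*(l^+,n^+)^+\wedge_{\Sigma_l}(-)$ are not $\mathcal{G}^1_l$-cofibrations in general, so this step cannot be carried out. The inductive hypothesis (and the lemma itself) only produces $\mathcal{G}^2$-cofibrations, and these are strictly weaker than $\mathcal{G}^1$-cofibrations: in the motivating example a $\mathcal{G}^1_l$-cofibration is a cofibration all of whose new simplices have isotropy in the family $\mathcal{G}_l$, while a $\mathcal{G}^2_l$-cofibration has no isotropy constraint. Concretely, take $G$ trivial, $A = \ast$ and $B = \Gamma(1^+,-)$; this map satisfies the hypotheses of the lemma (every $i_m(f)$ is an isomorphism or attaches simplices with trivial isotropy), but for $l = 3$ the element $p_1 \in \Gamma(1^+,3^+) = B(3^+) = (\sk_2 B)(3^+)$ has isotropy $\Sigma_{\{2,3\}} \neq \{1\}$, so neither $\ast \to B(3^+)$ nor $\ast \to (\sk_2 B)(3^+)$ is a $\mathcal{G}^1_3$-cofibration. (Your factorization of $A(l^+) \to B(l^+)$ through $i_l(f)$ does not help: its first factor is a cobase change of a map that is only a $\mathcal{G}^2_l$-cofibration by induction, so the composite is at best a $\mathcal{G}^2_l$-cofibration.) The paper's proof repairs both problems at once: the \emph{only} map it smashes with $\Inj_*(l^+,n^+)^+$ is $i_l(f)$ itself, which \emph{is} a $\mathcal{G}^1_l$-cofibration by hypothesis since $l \leq N$; and because $\Inj_*(l^+,n^+)^+\wedge_{\Sigma_l}(-)$ is a left adjoint, it preserves pushouts, so $\Inj_*(l^+,n^+)^+\wedge_{\Sigma_l} i_l(f)$ is exactly the relative pushout-product map required by the patching lemma, whose other input is the inductive hypothesis $(\sk_{l-1}A)(n^+) \to (\sk_{l-1}B)(n^+)$. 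Replacing your three-legs gluing step by this patching argument yields the paper's proof.
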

\begin{proof}
The case $l = 0$ is trivial. Assume inductively that the assertion holds true for all $l-1\leq N-1$.
By the first part of assumption $(c)$, $\Inj_*(l^+, n^+)^+\wedge_{\Sigma_l} i_l(f)$ is a $\mathcal{G}^2_n$-cofibration (resp. acyclic $\mathcal{G}^2_n$-cofibration).
In view of Proposition \ref{prop: diagram skeleta}, the inductive step can now be finished by applying Reedy's patching Lemma to the diagram
\[
\resizebox{12,5cm}{!}{
\xymatrix{
\Inj_*(l^+, n^+)^+\wedge_{\Sigma_l} A(l^+)\ar[d] & \Inj_*(l^+, n^+)^+\wedge_{\Sigma_l} (\sk_{l-1} A)(l^+) \ar[l]\ar[r] \ar[d] & (\sk_{l-1} A)(n^+)\ar[d]\\
\Inj_*(l^+, n^+)^+\wedge_{\Sigma_l} B(l^+) & \Inj_*(l^+, n^+)^+\wedge_{\Sigma_l} (\sk_{l-1} B)(l^+) \ar[l]\ar[r] & (\sk_{l-1} B)(n^+).
}
}
\]
\end{proof}

\begin{Lemma}
 \label{lem: ac str cof}
If $f\colon A\rightarrow B$ is an acyclic strict cofibration, then the maps
\[
\xymatrix{
 i_n(f)\colon A(n^+)\cup_{\sk_{n-1}A(n^+)} (\sk_{n-1}B)(n^+) \ar[r] & B(n^+)
}
\]
are in fact acyclic $\mathcal{G}^1_n$-cofibrations.
\end{Lemma}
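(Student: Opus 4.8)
The plan is to argue by induction on $n$, reducing everything to the two-out-of-three property. Since $f$ is in particular a strict cofibration, we already know that each $i_n(f)$ is a $\mathcal{G}^1_n$-cofibration, so the only thing left to check is that $i_n(f)$ is also a $\mathcal{G}^1_n$-equivalence. The starting point is the observation that, writing
\[
P := A(n^+)\cup_{(\sk_{n-1}A)(n^+)} (\sk_{n-1}B)(n^+)
\]
for the source of $i_n(f)$, the canonical map $j\colon A(n^+)\rightarrow P$ into the pushout fits into a factorization $i_n(f)\circ j = f(n^+)$, which is well-defined by naturality. As $f$ is a strict weak equivalence, $f(n^+)$ is a $\mathcal{G}^1_n$-equivalence, so by two-out-of-three it will suffice to prove that $j$ is a $\mathcal{G}^1_n$-equivalence.

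For the inductive step I would assume that $i_m(f)$ is an acyclic $\mathcal{G}^1_m$-cofibration for all $m\leq n-1$ (the case $n=0$ being trivial, since $A(0^+)=B(0^+)=*$). Feeding this hypothesis into Lemma \ref{lem: sk cof} with $N=n-1$ and $l=n-1$ shows that $(\sk_{n-1}A)(n^+)\rightarrow (\sk_{n-1}B)(n^+)$ is an acyclic $\mathcal{G}^2_n$-cofibration. The map $j$ is by construction the cobase change of this map along $(\sk_{n-1}A)(n^+)\rightarrow A(n^+)$, and since acyclic cofibrations are stable under cobase change in any model category, $j$ is itself an acyclic $\mathcal{G}^2_n$-cofibration. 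In particular $j$ is a $\mathcal{G}^2_n$-equivalence, hence a $\mathcal{G}^1_n$-equivalence by assumption $(b)$. Combined with the factorization above and two-out-of-three, this shows that $i_n(f)$ is a $\mathcal{G}^1_n$-equivalence, completing the induction.

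The point that needs care is the apparent circularity: Lemma \ref{lem: sk cof} carries a hypothesis on the acyclicity of the maps $i_m(f)$, which is exactly the conclusion we are after. The induction on $n$ is precisely what breaks the circle, since the instance of Lemma \ref{lem: sk cof} used at level $n$ only refers to the maps $i_m(f)$ with $m\leq n-1$, all of which are already controlled by the inductive hypothesis. The remaining ingredients---the factorization $i_n(f)\circ j=f(n^+)$, the stability of acyclic cofibrations under pushout, and the comparison of $\mathcal{G}^2_n$- and $\mathcal{G}^1_n$-equivalences furnished by assumption $(b)$---are all formal.
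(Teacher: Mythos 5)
Your proof is correct and follows essentially the same route as the paper's: induction on $n$, applying Lemma \ref{lem: sk cof} with $N=n-1$ to see that $(\sk_{n-1}A)(n^+)\rightarrow(\sk_{n-1}B)(n^+)$ is an acyclic $\mathcal{G}^2_n$-cofibration, pushing this out to get that $A(n^+)\rightarrow P$ is one too, then using assumption (b) and two-out-of-three against $f(n^+)$. Your write-up is merely more explicit about the factorization $i_n(f)\circ j=f(n^+)$ and about why the apparent circularity with Lemma \ref{lem: sk cof} is harmless.
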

\begin{proof}
 The case $n = 0$ is trivial.
 Assume inductively that $i_m(f)$ is an acyclic $\mathcal{G}^1_m$-cofibration
 for all $m \leq n-1$. We show that $i_{n}(f)$ is a $\mathcal{G}^1_n$-equivalence.
 To this end it suffices to show that
 $(\sk_{n-1}A)(n^+)\rightarrow (\sk_{n-1}B)(n^+)$ is an acyclic
 $\mathcal{G}^2_n$-cofibration,
 because this implies
 that $A(n^+)\rightarrow A(n^+)\cup_{\sk_{n-1}(A(n^+)} (\sk_{n-1}B)(n^+)$ is an
 acyclic $\mathcal{G}^2_n$-cofibration
 and, since $\mathcal{G}^2_n$-equivalences are in particular
 $\mathcal{G}^1_n$-equivalences by assumption $(b)$, the assertion follows then from two out of three
 for weak equivalences.
 But the map in question is an acyclic $\mathcal{G}^2_n$-cofibration by Lemma \ref{lem: sk cof} applied to the case $N = n-1$.
\end{proof}

There are dual results for fibrations.

\begin{Prop}
For any $\Gamma$-$G$-space $X$ and any positive integers $m$, $n\geq 0$, there is a pullback square
of $G\times \Sigma_n$-spaces
\begin{equation}
\begin{gathered}
\label{diagram coskeleta}
\entrymodifiers={+!! <0pt, \fontdimen22\textfont2>}
\xymatrix{
 (\csk_l X)(n^+) \ar[r] \ar[d] & \Map_{\sset}(\Surj_*(n^+, l^+)^+, X(l^+))^{\Sigma_l} \ar[d]\\
 (\csk_{l-1}X)(n^+) \ar[r] & \Map_{\sset}(\Surj_*(n^+, l^+)^+, (\csk_{l-1}X)(l^+))^{\Sigma_l}.
}
\end{gathered}
\end{equation}
Here the top and bottom horizontal maps are given by pushing forward along an element of $\Surj_*(l^+, n^+)$,
where one uses the canonical identification $(\csk_l X)(l^+)\rightarrow X(l^+)$ for the top map,
and the left and right vertical maps are induced by the canonical maps
$X(l^+)\rightarrow (\csk_{l-1}X)(l^+)$ and $(\csk_l X)(n^+)\rightarrow (\csk_{l-1} X)(n^+)$ respectively.
\end{Prop}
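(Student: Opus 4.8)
The statement is the formal dual of Proposition \ref{prop: diagram skeleta}, and the plan is to prove it by reducing to the underlying diagram of simplicial sets and then invoking the matching-object analogue of Lydakis's result. First I would observe that $(\csk_l X)(n^+)$ is by definition a limit, computed levelwise, and that the forgetful functor $(G\times\Sigma_n)\sset\rightarrow\sset$ creates limits, since $(G\times\Sigma_n)\sset$ is a category of functors into $\sset$. Hence the commutative square \eqref{diagram coskeleta} is a pullback of $G\times\Sigma_n$-spaces if and only if its underlying square of simplicial sets is a pullback, and it remains only to analyze the latter.

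Next I would identify that underlying square. The set $\Surj_*(n^+, l^+)$ carries a free left $\Sigma_l$-action by postcomposition (since a surjection hits every element of $l^+$), with orbit set the order-preserving surjections $n^+\rightarrow l^+$; this is precisely dual to the description of $\Inj_*(l^+,n^+)$ used in Proposition \ref{prop: diagram skeleta}. For a free based $\Sigma_l$-set $F^+$ and any $\Sigma_l$-space $Y$, a $\Sigma_l$-equivariant map $F^+\rightarrow Y$ is freely determined by its values on a set of orbit representatives, so that $\Map_{\sset}(F^+, Y)^{\Sigma_l}$ is naturally isomorphic to the product $\prod_{[f]\in F/\Sigma_l} Y$. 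Applying this with $F=\Surj_*(n^+,l^+)$ and $Y=X(l^+)$, respectively $Y=(\csk_{l-1}X)(l^+)$, exhibits the right-hand column of \eqref{diagram coskeleta} on underlying spaces as a product indexed by order-preserving surjections, exactly dual to the wedge indexed by order-preserving injections in Proposition \ref{prop: diagram skeleta}.

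Finally, I would invoke the dual of~\cite[Proposition 3.8]{Lydakis} — Lydakis's description of the matching (coskeleton) filtration of an ordinary $\Gamma$-space — to conclude that this underlying square of simplicial sets is a pullback; lifting back along the forgetful functor then finishes the proof. The main obstacle is the equivariant bookkeeping in the middle step: one must track the left $\Sigma_l$-action and right $\Sigma_n$-action on $\Surj_*(n^+,l^+)$ as they occur in the adjunction of Assumption \ref{assumptions}(c), and verify that the identification of $\Map_{\sset}(\Surj_*(n^+,l^+)^+, -)^{\Sigma_l}$ as an indexed product is genuinely $G\times\Sigma_n$-equivariant and compatible with the canonical maps $X(l^+)\rightarrow(\csk_{l-1}X)(l^+)$ and $(\csk_l X)(n^+)\rightarrow(\csk_{l-1}X)(n^+)$ defining the coskeleton tower.
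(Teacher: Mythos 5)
Your proposal is correct and follows essentially the same route as the paper: the paper's entire proof is the remark that one argues ``analogously to the proof of~\cite[Proposition 3.8]{Lydakis}'', i.e.\ by dualizing Lydakis's skeleton argument, and your reduction to underlying simplicial sets together with the identification of $\Map_{\sset}(\Surj_*(n^+,l^+)^+,-)^{\Sigma_l}$ as a product over orbit representatives (using freeness of the postcomposition $\Sigma_l$-action) is exactly the dual of the bookkeeping the paper carries out for the skeleton case in Proposition~\ref{prop: diagram skeleta}. The only caveat is that Lydakis states and proves only the skeleton (pushout) version, so there is no dual proposition in~\cite{Lydakis} to cite verbatim; one must actually dualize his proof, which is precisely what the paper's one-line proof asserts.
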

\begin{proof}
The proof is analogous to the proof of~\cite[Proposition 3.8]{Lydakis}.
\end{proof}

\begin{Lemma}{\cite[Lemma 3.7]{BF}}
\label{lem: csk fib}
 Consider a map of $\Gamma$-$G$-spaces $f\colon A \rightarrow B$ such that the maps $p_m(f)$ are $\mathcal{G}^1_m$-fibrations (resp. acyclic $\mathcal{G}^1_m$-fibrations) for all $m\leq N$,
 where $N\leq n$ is fixed.
 Then the maps $(\csk_{l} A)(n^+) \rightarrow (\csk_l B)(n^+)$ are $\mathcal{G}^1_n$-fibrations (resp. acyclic $\mathcal{G}^1_n$-fibrations) for all $l\leq N$.
\end{Lemma}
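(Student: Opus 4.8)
The plan is to dualize the proof of Lemma~\ref{lem: sk cof} verbatim, running an induction on $l$ and replacing the pushout square of Proposition~\ref{prop: diagram skeleta} by the pullback square~\eqref{diagram coskeleta}. The base case $l = 0$ is immediate, since $(\csk_0 A)(n^+)\cong A(0^+) = \ast$, so that $(\csk_0 A)(n^+)\rightarrow (\csk_0 B)(n^+)$ is an isomorphism and in particular an acyclic $\mathcal{G}^1_n$-fibration.

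For the inductive step I would assume the assertion for $l-1\leq N-1$ and study the map of pullback squares obtained by applying~\eqref{diagram coskeleta} to $A$ and to $B$. Abbreviating, for $X\in\{A, B\}$,
\begin{gather*}
M_X = \Map_{\sset}(\Surj_*(n^+, l^+)^+, X(l^+))^{\Sigma_l},\qquad R_X = (\csk_{l-1}X)(n^+),\\
N_X = \Map_{\sset}(\Surj_*(n^+, l^+)^+, (\csk_{l-1}X)(l^+))^{\Sigma_l},
\end{gather*}
the square~\eqref{diagram coskeleta} exhibits $(\csk_l X)(n^+)$ as the pullback $M_X\times_{N_X} R_X$. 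There are exactly two inputs I would need. First, the inductive hypothesis gives that $R_A\rightarrow R_B$ is a $\mathcal{G}^1_n$-fibration (resp.\ acyclic $\mathcal{G}^1_n$-fibration). Second, since $\Map_{\sset}(\Surj_*(n^+, l^+)^+, -)^{\Sigma_l}$ is a right adjoint it preserves pullbacks, so the relative matching map $M_A\rightarrow M_B\times_{N_B} N_A$ is canonically identified with $\Map_{\sset}(\Surj_*(n^+, l^+)^+, p_l(f))^{\Sigma_l}$; because $p_l(f)$ is a $\mathcal{G}^1_l$-fibration (resp.\ acyclic $\mathcal{G}^1_l$-fibration) for $l\leq N$ by hypothesis and the second part of assumption $(c)$ is a Quillen adjunction, this relative matching map is a $\mathcal{G}^1_n$-fibration (resp.\ acyclic $\mathcal{G}^1_n$-fibration).

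To conclude I would feed these two facts into the dual of Reedy's patching lemma (cf.~\cite[3.8]{BF}). Explicitly, the induced map of pullbacks factors as
\[
M_A\times_{N_A} R_A\longrightarrow M_B\times_{N_B} R_A\longrightarrow M_B\times_{N_B} R_B,
\]
where the first arrow is the base change of the relative matching map $M_A\rightarrow M_B\times_{N_B} N_A$ along $R_A\rightarrow N_A$, and the second is the base change of $R_A\rightarrow R_B$ along the projection $M_B\times_{N_B} R_B\rightarrow R_B$; both are therefore (acyclic) $\mathcal{G}^1_n$-fibrations, hence so is their composite $(\csk_l A)(n^+)\rightarrow (\csk_l B)(n^+)$. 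I expect the only delicate point to be the bookkeeping in the second input---checking that the right adjoint really carries the matching map $p_l(f)$ to the relative matching map of the cospan, and that the two factors are the claimed base changes. Everything else is the formal closure of (acyclic) fibrations under pullback and composition, so that, in contrast to the cofibration case, no properness is invoked in this direction.
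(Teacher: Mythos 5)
Your proof is correct and follows the paper's argument exactly: induction on $l$ using the pullback square \eqref{diagram coskeleta}, with the dual Reedy patching lemma fed by the inductive hypothesis and the identification of the relative matching map with $\Map_{\sset}(\Surj_*(n^+,l^+)^+, p_l(f))^{\Sigma_l}$, which is an (acyclic) $\mathcal{G}^1_n$-fibration by the second Quillen adjunction in assumption $(c)$. The only difference is that you unfold the patching lemma into its explicit two-step base-change factorization, which the paper leaves implicit.
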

\begin{proof}
The case $l = 0$ is trivial. Assume inductively that the assertion holds true for $l-1\leq N-1$.
By Reedy's patching lemma it suffices to know that
$\Map_{\sset}(\Surj_*(n^+,l^+)^+, p_l)^{\Sigma_l}$ is a $\mathcal{G}^1_n$-fibration (resp. acyclic $\mathcal{G}^1_n$-fibration)
which follows by the second part of assumption $(c)$.
\end{proof}

\begin{Lemma}
 \label{lem: ac str fib}
 If $f\colon A\rightarrow B$ is an acyclic strict fibration, then the maps
 \[
 \xymatrix{
 p_n(f)\colon A(n^+)\ar[r] & (\csk_{n-1}A)(n^+)\times_{(\csk_{n-1}B)(n^+)}B(n^+)
}
\]
 are in fact acyclic $\mathcal{G}^1_n$-fibrations.
\end{Lemma}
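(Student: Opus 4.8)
The plan is to prove this by induction on $n$, dualizing the argument of Lemma \ref{lem: ac str cof} step by step. Since $f$ is in particular a strict fibration, each $p_n(f)$ is already a $\mathcal{G}^1_n$-fibration, so the only thing to establish is that $p_n(f)$ is in addition a $\mathcal{G}^1_n$-equivalence. The base case $n=0$ is trivial: by Definition \ref{def: Gamma G space} we have $A(0^+)=B(0^+)=*$, so both the source and the target of $p_0(f)$ are a point.

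For the inductive step I would assume that $p_m(f)$ is an acyclic $\mathcal{G}^1_m$-fibration for all $m\leq n-1$. This is exactly the input needed to apply the acyclic version of Lemma \ref{lem: csk fib} with $N=n-1$, which yields that the canonical map $(\csk_{n-1}A)(n^+)\rightarrow (\csk_{n-1}B)(n^+)$ is an acyclic $\mathcal{G}^1_n$-fibration. Note that, in contrast with the cofibration case, no appeal to assumption $(b)$ of \ref{assumptions} is necessary here, since Lemma \ref{lem: csk fib} already lands in the $\mathcal{G}^1_n$-model structure rather than in the $\mathcal{G}^2_n$-one.

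Next I would use that acyclic fibrations are stable under base change. Pulling back the acyclic $\mathcal{G}^1_n$-fibration just obtained along the canonical map $B(n^+)\rightarrow (\csk_{n-1}B)(n^+)$ shows that the projection $(\csk_{n-1}A)(n^+)\times_{(\csk_{n-1}B)(n^+)}B(n^+)\rightarrow B(n^+)$ is again an acyclic $\mathcal{G}^1_n$-fibration. By the universal property defining $p_n(f)$, composing $p_n(f)$ with this projection recovers $f(n^+)\colon A(n^+)\rightarrow B(n^+)$, which is a $\mathcal{G}^1_n$-equivalence because $f$ is a strict weak equivalence. Two-out-of-three then forces $p_n(f)$ to be a $\mathcal{G}^1_n$-equivalence; combined with the fact that it is a $\mathcal{G}^1_n$-fibration, it is an acyclic $\mathcal{G}^1_n$-fibration, which closes the induction.

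The only genuine obstacle is bookkeeping rather than content. One must check that the inductive hypothesis is \emph{precisely} the acyclic input demanded by Lemma \ref{lem: csk fib}, and that the target of $p_n(f)$ is a bona fide pullback, so that stability of acyclic fibrations under base change applies verbatim and the composite with the projection is literally $f(n^+)$. Once these identifications are in place the rest is formal, and the proof is in fact slightly shorter than its cofibration analogue precisely because assumption $(b)$ is not needed.
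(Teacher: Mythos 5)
Your proof is correct and follows essentially the same route as the paper's: induction on $n$, applying Lemma \ref{lem: csk fib} with $N=n-1$ to get that $(\csk_{n-1}A)(n^+)\rightarrow(\csk_{n-1}B)(n^+)$ is an acyclic $\mathcal{G}^1_n$-fibration, then base change along $B(n^+)\rightarrow(\csk_{n-1}B)(n^+)$. The only difference is that you spell out the final two-out-of-three step (factoring $f(n^+)$ as the projection composed with $p_n(f)$), which the paper leaves implicit, and your observation that assumption $(b)$ is not needed here, unlike in the cofibration case, is also accurate.
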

\begin{proof}
 The case $n = 0$ is trivial.
 Assume inductively that $p_m(f)$ are acyclic $\mathcal{G}^1_m$-fibrations
 for $m \leq n-1$.
 We show that $p_{n}(f)$ is an acyclic $\mathcal{G}^1_n$-fibration.
 By the previous lemma in the case $N = n-1$, we have that
 $(\csk_{n-1}A)(n^+)\rightarrow (\csk_{n-1}B)(n^+)$
 is an acyclic $\mathcal{G}_n^1$-fibration.
Hence $(\csk_{n-1}A)(n^+)\times_{(\csk_{n-1}B)(n^+)}B(n^+)\rightarrow B(n^+)$
is an acyclic $\mathcal{G}_n^1$-fibration as well.
\end{proof}

\begin{proof}[Proof of Theorem \ref{thm: gen strict model structure}]
{\bf MC 1}, {\bf MC 2} and {\bf MC 3} are clear.
{\bf MC 4} follows immediately from Lemma \ref{lem: ac str cof}, Lemma \ref{lem: ac str fib} and
Proposition \ref{prop: ext square}. So we only have to show {\bf MC 5}, the existence of factorizations.
Given a map $f\colon A\rightarrow B$ in $\Gamma(G\sset)$,
assume inductively that it has already been factored up to level $n-1$ as an acyclic strict cofibration followed
by a strict fibration (resp. strict cofibration followed by an acyclic strict fibration) $T_{n-1}A \rightarrow C_{\leq n-1} \rightarrow T_{n-1}B$.
Then, as in~\cite{BF}, we obtain a diagram
\begin{equation}
\label{eq: faktorisierung strikt}
\begin{gathered}
 \xymatrix{
(\sk_{n-1} A)(n^+) \ar[r] \ar[d] &A(n^+) \ar[r]\ar[d] &(\csk_{n-1} A)(n^+) \ar[d]\\
(\sk_{n-1} C_{\leq n-1})(n^+) \ar[r] \ar[d] &K \ar[r]\ar[d] &(\csk_{n-1} C_{\leq n-1})(n^+) \ar[d]\\
(\sk_{n-1} B)(n^+) \ar[r]        &B(n^+) \ar[r] &(\csk_{n-1} B)(n^+),
}
\end{gathered}
\end{equation}
where $K$ comes from a factorization
\[
\resizebox{12,5cm}{!}{
\xymatrix{
(\sk_{n-1}C_{\leq n-1})(n^+)\cup_{(\sk_{n-1}A)(n^+)}A(n^+)\ar[r] & K\ar[r] & (\csk_{n-1}C_{\leq n-1})(n^+)\times_{(\csk_{n-1} B)(n^+)} B(n^+)
}
}
\]
of the canonical map into an acyclic cofibration followed by a fibration (resp. cofibration followed by an acyclic fibration)
in $\mathcal{G}^1_n\sset$.
The $G\times\Sigma_n$-space $K$ gives rise to an object $C_{\leq n}\in\Gamma_{\leq n}(G\sset)$
with $C_{\leq n}(k^+) = C_{\leq n-1}(k^+)$ for all $k\leq n-1$ and $C_{\leq n}(n^+) = K$,
such that the canonical factorization
\[
\xymatrix{ 
(\sk_{n-1} C_{\leq n})(n^+) \ar[r] & K \ar[r] & (\csk_{n-1} C_{\leq n})(n^+)
}
\]
equals the factorization in (\ref{eq: faktorisierung strikt}).

In any case, this produces a factorization $A\rightarrow C\rightarrow B$ as a strict cofibration followed by a strict fibration.
Assume that $K$ was always obtained by a factorization as an acyclic cofibration followed by a fibration.
We show that $A\rightarrow C$ is acyclic.
But this follows from Lemma \ref{lem: sk cof} for $l = n$.
In the other case, $K$ was always obtained by a factorization as cofibration followed by an acyclic fibration.
Then $C\rightarrow B$ is acyclic by Lemma \ref{lem: csk fib} in the case $l = n$.

Finally, this model structure is $G$-simplicial since, for all $n\geq 0$,
$i_n(j\square i)$ is isomorphic to $j\square i_n(f)$
for any strict cofibration $f$ and any $G$-cofibration $j$ of $G$-spaces.
\end{proof}
\end{subsection}

\begin{subsection}{A characterization of flat cofibrations}
\label{subsec: A characterization of flat cofibrations}
The aim of this section ist
to prove
\begin{Prop}
\label{prop: product G flat}
For any $G$-flat $G$-symmetric spectrum $X$ and any finite $G$-set $S$ of cardinality $n$,
the spectrum $X^{\times S}$ is $G$-flat and
the inclusion
\[
\xymatrix{
X^{\times S}_{\leq n-1} \ar[r] & X^{\times S}
}
\]
is a $G$-flat cofibration of $G$-symmetric spectra.
Here $X^{\times S}_{\leq n-1}$ is the subspectrum which is levelwise given by those tuples in the product
such that either two entries coincide or one of them equals the basepoint.
In particular, the
$G$-symmetric spectrum $\Sp^{\times S}$ is $G$-flat and the
inclusion
\[
\xymatrix{
 \Sp^{\times S}_{\leq n-1}\ar[r] & \Sp^{\times S}
}
 \]
is a $G$-flat cofibration of $G$-symmetric spectra.
\end{Prop}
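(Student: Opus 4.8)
The plan is to prove the general statement for an arbitrary $G$-flat spectrum $X$ and then obtain the sphere case as the instance $X=\Sp$, which is itself $G$-flat by~\cite{Hausmann}. The main tool will be the characterization of $G$-flatness from Proposition~\ref{prop: characterization flatness}: it reduces the claim that $X^{\times S}$ is $G$-flat to checking that each latching map $L_m(X^{\times S})\to X^{\times S}(\mathbf m)$ is a $(G\times\Sigma_m)$-cofibration, and the claim about the inclusion to checking that each relative latching map $\nu_m$ is one. Recall that a $(G\times\Sigma_m)$-cofibration of spaces is an injection on which the isotropy group of a nondegenerate simplex off the image fixes that simplex; so the real task is to produce such cell structures.

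First I would introduce the $G$-equivariant support filtration
\[
X^{\times S}_{\leq 0}\subseteq X^{\times S}_{\leq 1}\subseteq\cdots\subseteq X^{\times S}_{\leq n-1}\subseteq X^{\times S}_{\leq n}=X^{\times S},
\]
in which $X^{\times S}_{\leq k}(\mathbf m)$ is the sub-$(G\times\Sigma_m)$-space of tuples with at most $k$ pairwise distinct non-basepoint entries. Each $X^{\times S}_{\leq k}$ is a subspectrum because the structure maps of $X^{\times S}$ are diagonal and hence preserve both coincidences and basepoints, and the filtration is $G$-stable because $G$ permutes the factors through its action on $S$ and so preserves the number of distinct values. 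Note that $X^{\times S}_{\leq 0}=\ast$ and that the subspectrum appearing in the statement is precisely the penultimate stage $X^{\times S}_{\leq n-1}$.

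The heart of the proof is to identify the successive quotients $X^{\times S}_{\leq k}/X^{\times S}_{\leq k-1}$ and to show that each inclusion $X^{\times S}_{\leq k-1}\to X^{\times S}_{\leq k}$ is a $G$-flat cofibration. Levelwise, each such quotient decomposes as a $(G\times\Sigma_m)$-space into a wedge indexed by the ways of grouping the coordinates into exactly $k$ classes carrying pairwise distinct non-basepoint values, the summands being built from $X(\mathbf m)$ and its latching quotients. The decisive point is that, on this top stratum, the $k$ values are forced to be pairwise distinct, so any group element permuting the $S$-factors that fixes a configuration must fix each value; this makes the factor-permutation part of the $(G\times\Sigma_m)$-action free away from the basepoint. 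Freeness removes precisely the stabilizers that could rotate a simplex, while the $G$-flatness of $X$ controls the internal $\Sigma_m$-action coming from the sphere coordinates; together these let me exhibit each relative latching map $\nu_m$ as a genuine $(G\times\Sigma_m)$-cofibration. Composing the stages $k=1,\dots,n$ then shows that $\ast\to X^{\times S}$ is a $G$-flat cofibration, so that $X^{\times S}$ is $G$-flat, while the top stage $k=n$ is exactly the asserted $G$-flat cofibration $X^{\times S}_{\leq n-1}\to X^{\times S}$.

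The step I expect to be the main obstacle is the explicit computation of the latching objects $L_m(X^{\times S}_{\leq k})$ of the filtered pieces and their matching with the free-orbit description above: products do not commute with the colimits defining latching objects, so disentangling the interaction of the skeletal (sphere-coordinate) filtration of a symmetric spectrum with the support filtration of the product requires genuine care, all the more so because the $G$-action on $S$ and the $\Sigma_m$-action must be tracked simultaneously. The conceptual mechanism---that quotienting out the fat diagonal converts the otherwise non-free coordinate-permutation action into a free one, upgrading a mere injection into an honest $(G\times\Sigma_m)$-cofibration---is transparent; the labour lies in making it compatible with the structure maps. As a sanity check I would first run the argument for $X=\Sp$, where $\Sp(\mathbf m)=S^{\mathbf m}$ and the subquotients become explicit configuration-type quotients of representation spheres, before treating a general $G$-flat $X$.
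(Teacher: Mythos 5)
Your proposal takes a genuinely different route from the paper, and it has a real gap; moreover, its central mechanism is incorrect. First, the notion of cofibration relevant to $G$-flatness is much simpler than you assume: the latching maps $\nu_m$ are required to be cofibrations in the \emph{all-subgroups} model structure on $(G\times\Sigma_m)$-spaces, and for simplicial sets these are precisely the levelwise injections --- no isotropy or freeness condition enters (the condition on isotropy of simplices off the image pertains to the family model structures such as the one for $\mathcal{G}_m$, which are not used in the definition of $G$-flat cofibrations). This is in fact the opening observation of the paper's proof: a map of $G$-symmetric spectra is a $G$-flat cofibration if and only if its underlying map of symmetric spectra is a flat cofibration, so the entire problem is non-equivariant. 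Second, the freeness claim on which your stratification argument rests is false: take $G=\Z/2=\{1,\tau\}$ acting on $S=G$ by translation, and a tuple $(y,\tau y)$ with $\tau y\neq y$ and both entries non-basepoint. Its entries are pairwise distinct, yet it is fixed by $\tau$, which swaps the two values rather than fixing them; so the diagonal $G$-action on your top stratum is not free, and any step that genuinely required freeness there would fail.

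The gap proper is that the step you yourself flag as ``the main obstacle'' --- computing the latching objects of the filtration stages $X^{\times S}_{\leq k}$ and proving that the relative latching maps are cofibrations --- is exactly where all the content of the proposition lies, and your proposal offers no working tool for it: since cofibrations are mere injections, what must be proved are statements about images and intersections inside $X^{\times S}(\mathbf{k}\sqcup\mathbf{l})$, which freeness (even if it held) would not supply. The paper avoids the filtration altogether. It first proves Proposition \ref{prop: characterization flatness}, via the poset-diagram Lemma \ref{lem: characterization flatness} borrowed from Sagave--Schlichtkrull: a map $X\to Y$ is a flat cofibration if and only if (a) the maps $X(\mathbf{k}\sqcup\mathbf{l})\cup_{X(\mathbf{k})\wedge S^{\mathbf{l}}}Y(\mathbf{k})\wedge S^{\mathbf{l}}\to Y(\mathbf{k}\sqcup\mathbf{l})$ are cofibrations and (b) certain squares are pullbacks; in particular a subspectrum $X$ of a flat spectrum $Y$ is flatly included if and only if the images of $X(\mathbf{k}\sqcup\mathbf{l})$ and $Y(\mathbf{k})\wedge S^{\mathbf{l}}$ in $Y(\mathbf{k}\sqcup\mathbf{l})$ intersect in the image of $X(\mathbf{k})\wedge S^{\mathbf{l}}$. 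With this criterion, flatness of $X^{\times N}$ follows from factoring the map in (a) as a composite of two evident cofibrations plus an easy pullback check, and the flat cofibrancy of $X^{\times N}_{\leq N-1}\to X^{\times N}$ is a one-line intersection check. If you wish to salvage your support filtration, you would still need this criterion (or an equivalent) at each stage, at which point the filtration itself becomes superfluous.
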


A $G\times\Sigma_n$-map is a $G\times\Sigma_n$-cofibration
if and only if its underlying map is a cofibration.
Therefore a map of $G$-symmetric spectra is a $G$-flat cofibration if and only if its underlying
morphism of symmetric spectra is a flat cofibration
and hence it suffices to prove the above proposition for $G$ the trivial group.

\begin{subsubsection}{Latching objects of symmetric spectra}
Let $\bf{k}$ denote the set $\{1, \ldots, k\}$.
Those are the objects of the category $\mathcal{I}$, where morphisms are injective maps of sets.
The category $\mathcal{I}$ has a symmetric monoidal structure $\sqcup$ given by concatenation
$\bf{m}\sqcup \bf{n} = \bf{m+n}$ with unit the empty set $\bf{0}$.
Let $(\sqcup \downarrow \bf{n})$ be the category with objects consisting of tuples $(\bf{k}, \bf{k'}, \alpha\colon \bf{k}\sqcup \bf{k'}\rightarrow \bf{n})$
with $\alpha$ injective. A morphism
$(\bf{k}, \bf{k'}, \alpha\colon \bf{k}\sqcup \bf{k'}\rightarrow \bf{n})\rightarrow (\bf{l}, \bf{l'}, \beta\colon \bf{l}\sqcup \bf{l'}\rightarrow \bf{n})$
is a tuple of morphisms $(\gamma\colon \bf{k}\rightarrow \bf{l}, \gamma'\colon \bf{k'}\rightarrow\bf{l'})$ in $\mathcal{I}$
such that $\beta\circ (\gamma\sqcup \gamma') = \alpha$.

Given two symmetric spectra $E$ and $F$, their smash product is given in level $n$ by
\[
 \xymatrix{
 \colim_{\alpha\colon {\bf{k}}\sqcup {\bf{k'}}\rightarrow {\bf{n}}} E({\bf{k}})\wedge F({\bf{k'}})\wedge S^{{{\bf{n}}} - \alpha},
 }
\]
where the colimit is taken over the category $(\sqcup \downarrow {\bf{n}})$ and we use $\alpha$ as a shorthand for the image of $\alpha$.
A map $(\gamma, \gamma')$ in this category induces the map
\[
 E({\bf{k}})\wedge F({\bf{k'}}) \wedge S^{{\bf{n}}-\alpha}\cong E({\bf{k}})\wedge S^{{\bf{l}}-\gamma} \wedge F({\bf{k'}})\wedge S^{\bf{l'}-\gamma'}\wedge S^{\bf{n}-\beta}\rightarrow E({\bf{l}})\wedge F({\bf{l'}})\wedge S^{\bf{n}-\beta},
\]
where one uses $\gamma$ and $\gamma'$ to identify $\bf{n}-\alpha$ with $({\bf{n}} - \beta) \sqcup ({\bf{l}}-\gamma) \sqcup ({\bf{l'}}-\gamma')$
in the first isomorphism and the second map uses the isomorphisms $E({\bf{k}})\cong E(\gamma)$ $F({\bf{k'}})\cong F(\gamma')$
given by $\gamma$ and $\gamma'$ and the generalized structure maps.

Define $\overline{\Sp}$ to be the truncated sphere spectrum, i.e. $\overline{\Sp}_0 = *$
and $\overline{\Sp}_n = S^n$ if $n\geq 1$. The structure maps are the evident maps.
The $n$th latching object of a symmetric spectrum $X$ is now defined to be the $n$th level of the smash
product of $X$ with $\overline{\Sp}$,
\[
L_n(X) = (X\wedge \overline{\Sp})_n.
\]
More generally for a morphism $f\colon X\rightarrow Y$ of symmetric spectra
we set
\[
L_n(f) = X({\bf{n}})\cup_{L_n(X)} L_n(Y).
\]
The generalized structure maps induce
$\nu_n(X)\colon L_n(X)\rightarrow X({\bf{n}})$ and
$\nu_n(f)\colon L_n(f)\rightarrow Y({\bf{n}})$, which are the maps that appear in the definition of the ($G$-)flat model structure.

For our purpose, it is convenient to use a slightly different model for the latching morphisms.
To this end, we define
$\mathcal{P}({\bf{n}})$ to be the poset of subsets
of ${\bf{n}}$.
Given a symmetric spectrum $X$ and a morphism $f\colon X\rightarrow Y$ we
get two functors ${\bf{L}}_n(X)$ and ${\bf{L}}_n(f)$
from $\mathcal{P}({\bf{n}})$ to $\sset$.
On objects, these are given by
$U\mapsto X(U)\wedge S^{{\bf{n}}-U}$ and
$U\mapsto X({\bf{n}})\cup_{X(U)\wedge S^{{\bf{n}}-U}}Y(U)\wedge S^{{\bf{n}}-U}$
respectively.
For an inclusion $\iota\colon U\subset V$ we let ${\bf{L}}_n(X)(\iota)$ be the composite
\[
 \xymatrix{
 X(V)\wedge S^{{\bf{n}}-V} \cong X(V)\wedge S^{U-V}\wedge S^{{\bf{n}}-U}\ar[r] & X(U)\wedge S^{{\bf{n}}- U}
 }
\]
where the second map is given by the generalized structure map $\sigma_V^{U-V}$ smashed with the identity on $S^{{\bf{n}}-U}$ and
similarly for ${\bf{L}}_n(f)$.
There are canonical maps $\tilde{\nu}_n(X)\colon\colim_{U\subsetneq {\bf{n}}}{\bf{L}}_n(X)\rightarrow X({\bf{n}})$
and $\tilde{\nu}_n(f)\colon\colim_{U\subsetneq {\bf{n}}}{\bf{L}}_n(f)\rightarrow Y({\bf{n}})$
induced by the generlized structure maps and we have
\begin{Lemma}
\label{lem: latching objects colimit}
The spaces $L_n(X)$ and $\colim_{U\subsetneq {\bf{n}}}{\bf{L}}_n(X)$ are naturally isomorphic
as $\Sigma_n$-spaces over $X({\bf{n}})$.
Similarly, the spaces $L_n(f)$ and $\colim_{U\subsetneq {\bf{n}}}{\bf{L}}_n(f)$ are naturally isomorphic as $\Sigma_n$-spaces over $Y({\bf{n}})$.
\end{Lemma}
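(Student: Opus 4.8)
I will prove the isomorphism as a cofinality statement, after first reducing the two assertions to one. The statement for $f$ follows formally from the statement for $X$: by definition $L_n(f) = X({\bf n})\cup_{L_n(X)}L_n(Y)$, while levelwise ${\bf L}_n(f)(U) = X({\bf n})\cup_{{\bf L}_n(X)(U)}{\bf L}_n(Y)(U)$. Since colimits commute with pushouts and the indexing poset of proper subsets $\{U\subsetneq{\bf n}\}$ is connected (for $n\geq 1$ it has least element $\emptyset$; the case $n=0$ is trivial by the convention $L_0=*$), the constant summand $X({\bf n})$ is not duplicated, so $\colim_{U\subsetneq{\bf n}}{\bf L}_n(f)\cong X({\bf n})\cup_{\colim{\bf L}_n(X)}\colim{\bf L}_n(Y)$; granting the $X$-statement this is exactly $L_n(f)$. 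So it suffices to treat ${\bf L}_n(X)$.

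For the $X$-statement I would unravel the definitions to get $L_n(X) = (X\wedge\overline{\Sp})_n = \colim_{(\sqcup\downarrow{\bf n})}X({\bf k})\wedge\overline{\Sp}({\bf k'})\wedge S^{{\bf n}-\alpha}$. Because $\overline{\Sp}({\bf 0}) = *$, every object with ${\bf k'}={\bf 0}$ contributes only the basepoint; moreover such an object receives no morphism from any object with $k'\geq 1$, since a morphism in $(\sqcup\downarrow{\bf n})$ contains an injection of second factors $\gamma'\colon{\bf k'}\hookrightarrow{\bf l'}$. Hence in the based colimit these objects contribute nothing beyond the already-present basepoint and may be discarded: $L_n(X)\cong\colim_{\mathcal{C}_+}$, where $\mathcal{C}_+\subseteq(\sqcup\downarrow{\bf n})$ is the full subcategory of objects with $k'\geq 1$.

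Next I would introduce the comparison functor $\Phi\colon\mathcal{C}_+\to\{U\subsetneq{\bf n}\}\subseteq\mathcal{P}({\bf n})$, $({\bf k},{\bf k'},\alpha)\mapsto\alpha({\bf k})$; this lands in proper subsets because $\alpha({\bf k'})$ is a nonempty set disjoint from $\alpha({\bf k})$, and it is functorial since $\beta\circ(\gamma\sqcup\gamma')=\alpha$ forces $\alpha({\bf k})\subseteq\beta({\bf l})$. Using the bijections $\alpha|_{\bf k}$, $\alpha|_{\bf k'}$ and the partition ${\bf n}-\alpha({\bf k}) = \alpha({\bf k'})\sqcup({\bf n}-\alpha)$, one builds an isomorphism $X({\bf k})\wedge\overline{\Sp}({\bf k'})\wedge S^{{\bf n}-\alpha}\cong X(\alpha({\bf k}))\wedge S^{{\bf n}-\alpha({\bf k})}$, exhibiting the restricted smash functor as ${\bf L}_n(X)\circ\Phi$; that this is natural on $\mathcal{C}_+$ is a routine diagram chase with the iterated generalized structure maps. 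It then remains to show that $\Phi$ is final, i.e. that $\colim_{\mathcal{C}_+}({\bf L}_n(X)\circ\Phi)\cong\colim_{U\subsetneq{\bf n}}{\bf L}_n(X)$.

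This cofinality is the heart of the matter and the step I expect to be most delicate. I would identify the comma category $(U\downarrow\Phi)$: since any two objects of $\mathcal{C}_+$ with the same images $(\alpha({\bf k}),\alpha({\bf k'}))=(W,V)$ are uniquely isomorphic, it is equivalent to the poset $\mathcal{Q}_U$ of pairs $(W,V)$ with $U\subseteq W\subsetneq{\bf n}$ and $\emptyset\neq V\subseteq{\bf n}-W$, ordered by inclusion in each coordinate. It is nonempty, containing $(U,\{j\})$ for any $j\in{\bf n}-U$, and connected: every $(W,V)$ lies above $(U,V)$, which is a legitimate object because $V\subseteq{\bf n}-W\subseteq{\bf n}-U$, and any two $(U,V)$, $(U,V')$ are joined through $(U,V\cup V')$. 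Thus $(U\downarrow\Phi)$ is nonempty and connected for every $U$, so $\Phi$ is final and the colimits agree, giving $L_n(X)\cong\colim_{U\subsetneq{\bf n}}{\bf L}_n(X)$. Finally the construction is natural in $X$, it is $\Sigma_n$-equivariant because $\Sigma_n$ acts by postcomposition on $\alpha$ and $\Phi$ is manifestly equivariant, and it lies over $X({\bf n})$ since both $\nu_n(X)$ and $\tilde\nu_n(X)$ are assembled from the same structure maps $\sigma_U^{{\bf n}-U}\colon X(U)\wedge S^{{\bf n}-U}\to X({\bf n})$; these last compatibilities are straightforward verifications.
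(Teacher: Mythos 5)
Your argument is correct (for $n\geq 1$) and produces the same isomorphism as the paper, but by a genuinely different route. The paper's proof is element-level: it simply writes down the $\Sigma_n$-map $L_n(X)\rightarrow\colim_{U\subsetneq\mathbf{n}}\mathbf{L}_n(X)$ sending the class of $(\alpha,\,x\wedge y\wedge z)$ to $\bigl(\alpha(\mathbf{k}),\,[x,\alpha|_{\mathbf{k}}]\wedge(\alpha|_{\mathbf{k}'})_*(y)\wedge z\bigr)$, writes down an explicit inverse $(U,[x,\alpha]\wedge y)\mapsto(\alpha\colon\mathbf{k}\rightarrow U\subset\mathbf{n},\,x\wedge y)$, and disposes of the statement for $f$ with the single remark that colimits commute with colimits. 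Your proof replaces the explicit inverse by a cofinality argument: prune the basepoint objects $k'=0$ from $(\sqcup\downarrow\mathbf{n})$ (legitimate exactly for the reason you give, namely that no object with $k'\geq 1$ maps to one with $k'=0$, so those objects impose no relations in the based colimit), identify the pruned diagram with $\mathbf{L}_n(X)\circ\Phi$ via the pointwise isomorphism $X(\mathbf{k})\wedge\overline{\Sp}(\mathbf{k}')\wedge S^{\mathbf{n}-\alpha}\cong X(\alpha(\mathbf{k}))\wedge S^{\mathbf{n}-\alpha(\mathbf{k})}$ (which is precisely the paper's formula), and check that $\Phi$ is final by computing $(U\downarrow\Phi)\simeq\mathcal{Q}_U$. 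The two proofs thus share the same combinatorial core; yours trades the paper's brevity for a clean separation of the implicit verifications --- the well-definedness and bijectivity of the paper's maps become your naturality and finality checks, both of which you carry out correctly --- and your reduction of the $f$-statement is done honestly, via connectedness of the index poset, rather than by the bare phrase ``colimits commute.''

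One caveat, which you share with the paper: at $n=0$ the second statement is false as literally stated, and your dismissal of that case (``trivial by the convention $L_0=*$'') is not correct. With the paper's definition $L_n(f)=X(\mathbf{n})\cup_{L_n(X)}L_n(Y)$ one gets $L_0(f)=X(\mathbf{0})=X_0$ (only $L_0(X)$, not $L_0(f)$, is a point), whereas $\colim_{U\subsetneq\mathbf{0}}\mathbf{L}_0(f)$ is a colimit over the empty poset, hence a point; these are not isomorphic over $Y_0$ unless $X_0=*$. The paper's ``colimits commute with each other'' breaks down at $n=0$ for the same reason. The discrepancy is harmless downstream, since in Proposition \ref{prop: characterization flatness} and its applications the level-zero latching condition amounts to injectivity of $f_0$, which is automatic for the subspectrum inclusions considered there; still, you should either restrict the second statement to $n\geq 1$ or add the hypothesis $X_0=*$, rather than appeal to a convention that conflicts with the paper's definitions.
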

\begin{proof}
Indeed, we define a $\Sigma_n$-map
\[
 L_n(X)\rightarrow \colim_{U\subsetneq {\bf{n}}}{\bf{L}}_n(X)
\]
by mapping $(\alpha\colon {\bf{k}}\sqcup{\bf{k'}}\rightarrow{\bf{n}}, x\wedge y\wedge z\in X({\bf{k}})\wedge S^{{\bf{k'}}}\wedge S^{{\bf{n}}-\alpha})$
to $(\alpha({\bf{k}}), [x, \alpha|_{\bf{k}}]\wedge ((\alpha|_{\bf{k'}})_*(y)\wedge z)\in X(\alpha({\bf{k}}))\wedge S^{{\bf{n}}- \alpha|_{\bf{k}}}$.
By abuse of notation, we secretely identified $X({\bf{k}})$ with $X_k$ via the isomorphism $[x\wedge f]\mapsto f_*(x)$.
The inverse is then given by $(U, [x, \alpha]\wedge y)\mapsto (\alpha\colon {\bf{k}}\rightarrow U\subset {\bf{n}}, x\wedge y)$.
The second part follows since colimits commute with each other.
\end{proof}
\end{subsubsection}

\begin{subsubsection}{A characterization of flat cofibrations}
In order to give a characterization of flat cofibrations, we
need the following lemma.
\begin{Lemma}
\label{lem: characterization flatness}
Given a functor $C\colon \mathcal{P}({\bf{n}})\rightarrow \sset$,
the induced map $\colim_{V\subsetneq U} C(V)\rightarrow C(U)$
is a cofibration for all $U\subset{\bf{n}}$ if and only if
\begin{itemize}
 \item[(a)] for all inclusions $V\subset U\subset{\bf{n}}$, the map $C(V)\rightarrow
 C(U)$ is a cofibration and
 \item[(b)] for all $U$, $V\subset{\bf{n}}$, the intersection of the images of $C(U)$
 and $C(V)$ in $C(U\cup V)$ equals the image of $C(U\cap V)$.
\end{itemize}
\end{Lemma}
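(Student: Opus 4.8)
The plan is to reduce the statement to a purely combinatorial one about diagrams of based sets, and then treat the two implications separately: the backward implication by a direct image-chase, and the forward implication by an induction on $|U|$ powered by a ``common ancestor'' description of the latching colimit. First I would observe that cofibrations in $\sset$ are exactly the degreewise monomorphisms, and that colimits, images and intersections of simplicial sets are all computed degreewise. Hence the latching map $\colim_{V\subsetneq U}C(V)\to C(U)$ is a cofibration if and only if it is injective in each simplicial degree, and likewise (a) and (b) hold if and only if they hold degreewise. So it suffices to prove the equivalence for a functor $C\colon\mathcal{P}({\bf n})\to\text{(based sets)}$, where \emph{cofibration} simply means \emph{injective}. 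Throughout I write $f_V^W\colon C(V)\to C(W)$ for the structure maps, $D_U:=\colim_{V\subsetneq U}C(V)$, and $\phi_U\colon D_U\to C(U)$ for the canonical map.

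For the backward implication I would assume (a) and (b) and argue $\phi_U$ is injective directly. Take $x\in C(V)$, $y\in C(W)$ with $V,W\subsetneq U$ and $f_V^U(x)=f_W^U(y)$; I must show $[x]=[y]$ in $D_U$. Since $C(V\cup W)\to C(U)$ is injective by (a), the equality descends to $f_V^{V\cup W}(x)=f_W^{V\cup W}(y)$ in $C(V\cup W)$. This element lies in the intersection of the images of $C(V)$ and $C(W)$ in $C(V\cup W)$, so by (b) it is the image of some $z\in C(V\cap W)$; injectivity of the relevant maps then forces $f_{V\cap W}^V(z)=x$ and $f_{V\cap W}^W(z)=y$. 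As $V\cap W\subsetneq U$, this exhibits $z$ as a common ancestor and gives $[x]=[y]$, so $\phi_U$ is injective (the case $U=\emptyset$ being just the basepoint inclusion).

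For the forward implication I would induct on $|U|$. The engine is the following description of $D_U$: assuming (a) and (b) for all proper subsets of $U$ (the inductive hypothesis), two elements $x\in C(V)$, $y\in C(W)$ with $V,W\subsetneq U$ become equal in $D_U$ if and only if there exist $Z\subseteq V\cap W$ and $z\in C(Z)$ with $f_Z^V(z)=x$ and $f_Z^W(z)=y$. I would prove this by checking that the \emph{common ancestor} relation is an equivalence relation containing the colimit generators and contained in the colimit relation. Granting it, (a) at level $U$ reduces to the ``add one element'' top maps $C(U\setminus\{i\})\to C(U)$, since any $C(V)\to C(U)$ factors as a composite of such maps with the lower factors already injective by the inductive hypothesis; each top map equals $\phi_U\circ c_i$, where $c_i\colon C(U\setminus\{i\})\to D_U$ is the canonical map, and $c_i$ is injective by the common-ancestor description while $\phi_U$ is injective by hypothesis. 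For (b) I would reduce to the case $W_1\cup W_2=U$ with both proper; an element in the intersection of the images of $C(W_1)$ and $C(W_2)$ in $C(U)$ lifts to $x_1\in C(W_1)$, $x_2\in C(W_2)$ with $\phi_U(c(x_1))=\phi_U(c(x_2))$, hence $c(x_1)=c(x_2)$ in $D_U$, and a common ancestor over $Z\subseteq W_1\cap W_2$ then exhibits the element in the image of $C(W_1\cap W_2)$, as required.

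The main obstacle is establishing the common-ancestor description of $D_U$, specifically the \emph{transitivity} of the ancestor relation: given ancestors of $x,y$ over $Z$ and of $y,y'$ over $Z'$, one must merge them into a single ancestor over $Z\cap Z'$. This is exactly the step that genuinely uses condition (b) (applied inside a proper subset, via the inductive hypothesis) together with injectivity of the maps $C(Z\cup Z')\to C(W)$ to recognize the common value as coming from $C(Z\cap Z')$, and it requires careful bookkeeping of images through these monomorphisms. Once this description is in place, both (a) and (b) follow formally, so I expect essentially all of the difficulty to be concentrated in that single verification.
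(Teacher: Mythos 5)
Your proof is correct, but it cannot be "the same as the paper's" for the simple reason that the paper does not prove this lemma at all: its proof is a one-line pointer to the proof of Proposition 3.11 in Sagave--Schlichtkrull, so your argument supplies exactly the details that the paper outsources. Checking it: the reduction to based sets is legitimate (cofibrations of based simplicial sets are precisely the levelwise injections, and colimits, images and intersections in $\sset$ are all computed levelwise, so both the latching condition and conditions (a), (b) are degreewise statements); the backward implication is a clean image chase, where the only point to watch is that $C(V\cup W)\to C(U)$ may be the identity when $V\cup W=U$, which is harmless; and the forward induction on $|U|$ via the common-ancestor description of $\colim_{V\subsetneq U}C(V)$ is the right engine. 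As you anticipate, all the real work sits in transitivity of the ancestor relation, which is exactly where (b) inside a proper subset (inductive hypothesis) plus the injectivity supplied by (a) are consumed; once that description is in place, (a) at stage $U$ follows from the factorization through $C(U\setminus\{i\})$ together with injectivity of the canonical map $C(U\setminus\{i\})\to\colim_{V\subsetneq U}C(V)$ and of the latching map, and (b) at stage $U$ follows by lifting an element of the intersection to a common ancestor over some $Z\subseteq W_1\cap W_2$. This is in substance the same style of argument as in the cited reference (an elementwise analysis of the latching colimit over the subset poset); what your version buys is a self-contained appendix and a precise accounting of where each hypothesis enters, at the cost of the combinatorial bookkeeping you correctly identify as the crux.
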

\begin{proof}
 This appears in the proof of~\cite[Proposition 3.11]{Sagave Schlichtkrull}.
\end{proof}
We can now prove
\begin{Prop}
\label{prop: characterization flatness}
 A map $f\colon X\rightarrow Y$ of symmetric spectra is a flat cofibration if and only if
 \begin{itemize}
 \item[(a)] for all $k$, $l\geq 0$ the map $X({\bf{k}\sqcup\bf{l}})\cup_{X({\bf{k}})\wedge S^{\bf{l}}} Y({\bf{k}})\wedge S^{{\bf{l}}}\rightarrow Y({\bf{k}}\sqcup{\bf{l}})$ is a cofibration and
 \item[(b)] for all integers $k$, $l$, $m\geq 0$, we have that
 \small
 \[
 \xymatrix{
X({\bf{k}\sqcup \bf{l}\sqcup\bf{m}})\cup_{X({\bf{l}})\wedge S^{\bf{k}\sqcup \bf{m}}} Y({\bf{l}})\wedge S^{\bf{k}\sqcup\bf{m}} \ar[r]\ar[d] & X({\bf{k}\sqcup \bf{l}\sqcup\bf{m}})\cup_{X({\bf{l}}\sqcup {\bf{m}})\wedge S^{\bf{k}}} Y({\bf{l}}\sqcup {\bf{m}})\wedge S^{\bf{k}}\ar[d]\\
X({\bf{k}\sqcup \bf{l}\sqcup\bf{m}})\cup_{X({\bf{k}}\sqcup {\bf{l}})\wedge S^{\bf{m}}} Y({\bf{k}}\sqcup {\bf{l}})\wedge S^{\bf{m}}\ar[r] & Y({\bf{k}\sqcup \bf{l}\sqcup\bf{m}})
}
\]
\end{itemize}
\normalsize
is a pullback.

In particular, if $Y$ is a flat symmetric spectrum and $X\subset Y$ is a subspectrum, then the inclusion $X\rightarrow Y$ is a flat cofibration if and only
if for all $k$, $l\geq 0$ the intersection of the images of $X({\bf{k}\sqcup\bf{l}})$ and $Y({\bf{k}})\wedge S^{\bf{l}}$ in $Y({\bf{k}\sqcup\bf{l}})$
equals the image of $X({\bf{k}})\wedge S^{\bf{l}}$.
\end{Prop}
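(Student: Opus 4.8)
The plan is to reduce everything to the combinatorial criterion of Lemma \ref{lem: characterization flatness}, applied to the functors $\mathbf{L}_n(f)\colon\mathcal{P}(\mathbf{n})\to\sset$. As already observed, a map of $G$-symmetric spectra is a $G$-flat cofibration exactly when its underlying map of symmetric spectra is a flat cofibration, so I may assume $G$ trivial. By definition $f$ is a flat cofibration if and only if $\nu_n(f)\colon L_n(f)\to Y(\mathbf{n})$ is a cofibration for every $n$, and Lemma \ref{lem: latching objects colimit} identifies this (for $n\geq 1$) with the top instance $U=\mathbf{n}$ of the assertion ``$\colim_{V\subsetneq U}\mathbf{L}_n(f)(V)\to\mathbf{L}_n(f)(U)$ is a cofibration''. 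The whole family of such assertions, ranging over all $U\subseteq\mathbf{n}$, is by Lemma \ref{lem: characterization flatness} equivalent to two conditions: (a$'$) each $\mathbf{L}_n(f)(V)\to\mathbf{L}_n(f)(U)$ is a cofibration, and (b$'$) the images of $\mathbf{L}_n(f)(U)$ and $\mathbf{L}_n(f)(V)$ in $\mathbf{L}_n(f)(U\cup V)$ meet in the image of $\mathbf{L}_n(f)(U\cap V)$.

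The heart of the argument is a bookkeeping identity relating latching maps at level $n$ to those at lower levels. For $U\subseteq\mathbf{n}$ with $|U|=m$, the splitting $S^{\mathbf{n}-V}\cong S^{U-V}\wedge S^{\mathbf{n}-U}$ for $V\subseteq U$ gives, under the identification $U\cong\mathbf{m}$, a natural isomorphism of functors $\mathbf{L}_n(f)|_{\mathcal{P}(U)}\cong X(\mathbf{n})\cup_{\mathbf{L}_m(X)(-)\wedge S^{\mathbf{n}-U}}\bigl(\mathbf{L}_m(Y)(-)\wedge S^{\mathbf{n}-U}\bigr)$. Taking the colimit over $V\subsetneq U$ and using that smashing with a fixed based simplicial set commutes with colimits, one finds that $\colim_{V\subsetneq U}\mathbf{L}_n(f)(V)\to\mathbf{L}_n(f)(U)$ is a cobase change of $\nu_m(f)\wedge S^{\mathbf{n}-U}$ (the pushout glues in $X(\mathbf{n})$ along the structure map $X(U)\wedge S^{\mathbf{n}-U}\to X(\mathbf{n})$). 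In particular, once $\nu_m(f)$ is a cofibration so is this latching map, since smashing with $S^{\mathbf{n}-U}$ preserves injections and cobase change preserves cofibrations.

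Granting this, both directions follow. If $f$ is a flat cofibration, then every $\nu_m(f)$ is a cofibration, so the previous paragraph yields the colimit assertion for all $U\subseteq\mathbf{n}$, whence Lemma \ref{lem: characterization flatness} gives (a$'$) and (b$'$); their top instances---$U=\mathbf{n}$ in (a$'$) and $U\cup V=\mathbf{n}$ in (b$'$)---are, using that $\mathbf{L}_n(f)$ is $\Sigma_n$-equivariant so that any subset of size $k$ is conjugate to $\mathbf{k}$, precisely conditions (a) and (b), where (b) is obtained from (b$'$) at $U=\mathbf{l}\sqcup\mathbf{m}$, $V=\mathbf{k}\sqcup\mathbf{l}$ by noting that, all maps being cofibrations, the intersection statement is equivalent to the displayed square being a pullback. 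Conversely I would induct on $n$ (the base case being immediate): assuming $\nu_m(f)$ is a cofibration for all $m<n$, the interior cases $U,V\subsetneq\mathbf{n}$ of (a$'$) and (b$'$) follow by applying Lemma \ref{lem: characterization flatness} to the restrictions $\mathbf{L}_n(f)|_{\mathcal{P}(W)}$ with $W\subsetneq\mathbf{n}$, whose colimit assertions are supplied by the bookkeeping identity and the inductive hypothesis, while the top cases are conditions (a) and (b); Lemma \ref{lem: characterization flatness} then yields the colimit assertion at $U=\mathbf{n}$, i.e.\ that $\nu_n(f)$ is a cofibration.

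For the ``in particular'' clause I would specialize to a levelwise inclusion $X\subset Y$ with $Y$ flat, so that every map in sight is an injection. Then condition (a) says exactly that the pushout $X(\mathbf{k}\sqcup\mathbf{l})\cup_{X(\mathbf{k})\wedge S^{\mathbf{l}}}Y(\mathbf{k})\wedge S^{\mathbf{l}}$ injects into $Y(\mathbf{k}\sqcup\mathbf{l})$, which is equivalent to the stated intersection condition, and condition (b) is then automatic: writing the images inside $Y(\mathbf{n})$ as $X(\mathbf{n})\cup(Y(U)\wedge S^{\mathbf{n}-U})$ and applying the distributive law $(A\cup B)\cap(A\cup C)=A\cup(B\cap C)$ together with the flatness of $Y$, which gives $(Y(U)\wedge S^{\mathbf{n}-U})\cap(Y(V)\wedge S^{\mathbf{n}-V})=Y(U\cap V)\wedge S^{\mathbf{n}-(U\cap V)}$, collapses the left-hand side to the image of $\mathbf{L}_n(f)(U\cap V)$. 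I expect the main obstacle to be the bookkeeping identity of the second paragraph and its compatibility with the $\Sigma_n$-action: once the interior latching maps are correctly written as sphere-smashed, cobase-changed copies of lower latching maps, the remainder is matching indices and invoking Lemmas \ref{lem: latching objects colimit} and \ref{lem: characterization flatness}.
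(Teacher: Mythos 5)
Your proposal is correct and takes essentially the same approach as the paper: the paper's own proof is a two-sentence reduction via Lemma \ref{lem: latching objects colimit} and Lemma \ref{lem: characterization flatness}, and what you supply---the cobase-change bookkeeping identity relating interior latching maps to sphere-smashed lower latching maps, the $\Sigma_n$-equivariance reduction to block subsets, the induction for the converse, and the verification of the ``in particular'' clause---is precisely the detail that the paper leaves implicit. No gaps.
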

\begin{proof}
In view of Lemma \ref{lem: latching objects colimit},
a map of symmetric spectra $f\colon X\rightarrow Y$ is flat
if and only if for all $n\geq 0$ and all subsets $U\subset {\bf{n}}$
the maps $\colim_{V\subsetneq U} X(U)\cup_{X(V)\wedge S^{U-V}} Y(V)\wedge S^{U-V}\rightarrow Y(U)$
are cofibrations. By Lemma \ref{lem: characterization flatness}, this is equivalent to conditions $(a)$ and $(b)$.
\end{proof}

We can now give a proof of the result we are after.
\begin{proof}[Proof of Proposition \ref{prop: product G flat}]
 Suppose $X$ is a flat symmetric spectrum.
 We prove first that $X^{\times N}$ is flat, provided that $X$ is flat.
 Condition $(a)$ in Proposition \ref{prop: characterization flatness}
 requires the map $X({\bf{n}})^{\times N}\wedge S^{\bf{k}}\rightarrow X({\bf{n}}\sqcup {\bf{k}})^{\times N}$
 to be a cofibration. But this map factors as the composition of two cofibrations
 \[
  \xymatrix{
X({\bf{n}})^{\times N}\wedge S^{\bf{k}}\rightarrow (X({\bf{n}})\wedge S^{\bf{k}})^{\times N} \rightarrow X({\bf{n}}\sqcup {\bf{k}})^{\times N}.
  }
 \]
Condition $(b)$ requires
\[
 \xymatrix{
 X({\bf{l}})^{\times N}\wedge S^{{\bf{k}}\sqcup {\bf{m}}} \ar[r] \ar[d] & X({\bf{l}}\sqcup{\bf{m}})^{\times N}\wedge S^{{\bf{k}}}\ar[d]\\
 X({\bf{k}}\sqcup{\bf{l}})^{\times N}\wedge S^{\bf{m}}\ar[r] & X({\bf{k}}\sqcup{\bf{l}}\sqcup{\bf{m}})^{\times N}
 }
\]
to be a pullback, which is readily checked.
It follows now from the second part of Proposition \ref{prop: characterization flatness} that $X^{\times N}_{\leq N-1}\rightarrow X^{\times N}$ is a flat cofibration.
\end{proof}
\end{subsubsection}
\end{subsection}

\end{section}

\newpage

\end{document}